 \tikzstyle{vecArrow} = [thick, decoration={markings,mark=at position
 \newtheorem{theorem}{Theorem}
 \newtheorem{lemma}[theorem]{Lemma}
 \newtheorem{proposition}[theorem]{Proposition}
\newtheorem{assumption}[theorem]{Assumption}
 \newtheorem{remark}[theorem]{Remark}
\theoremstyle{definition}
\newtheorem{algo}[theorem]{Algorithm}
 \numberwithin{theorem}{section}
 \numberwithin{equation}{section}
 \newcommand{\ddiv}{\operatorname{div}}
 \newcommand{\tri}{\mathcal{T}}
 \newcommand{\nodes}{\mathcal{N}}
 \newcommand{\edges}{\mathcal{E}}
 \newcommand{\Curl}{\operatorname{Curl}}
 \newcommand{\curl}{\operatorname{curl}}
 \newcommand{\R}{\mathbb{R}}
 \newcommand{\NC}{{\text{\!\tiny \rm N\!C}}}
\begin{document}

\author{M. Schedensack\thanks{Institut f\"ur Numerische Simulation, 
            Universit\"at Bonn, 
            Wegelerstra{\ss}e 6, D-53115 Bonn, Germany
             }
} 
\title{A new generalization of the $P_1$ non-conforming FEM to higher polynomial degrees\thanks{% 
        This work was supported by the Berlin Mathematical School.}
          }
\date{}
 \maketitle
 
\begin{abstract}
This paper generalizes the non-conforming FEM of Crouzeix and Raviart 
and its fundamental projection property
by a novel mixed formulation for the 
Poisson problem based on the Helm\-holtz decomposition.
The new formulation allows for ansatz spaces of arbitrary polynomial degree and 
its discretization coincides with the mentioned non-conforming FEM for the 
lowest polynomial degree. 
The discretization directly approximates the gradient of the solution 
instead of the solution itself.
Besides the a~priori and medius analysis, this paper proves optimal 
convergence rates for an adaptive algorithm for the new discretization.
These are also demonstrated in numerical experiments.
Furthermore, this paper focuses on extensions of this new scheme to 
quadrilateral meshes, mixed FEMs, and three space dimensions.
\end{abstract}
 
\noindent
{\small\textbf{Keywords}
non-conforming FEM, Helmholtz decomposition,
mixed FEM, adaptive FEM, optimality
}

\noindent
{\small\textbf{AMS subject classification}
65N30, 65N12, 65N15
}

\section{Introduction}

Non-conforming finite element methods (FEMs) play an important role in 
computational mechanics. They allow the discretization of partial differential 
equations (PDEs) for 
incompressible fluid flows, for almost 
incompressible materials in linear elasticity, and for low polynomial degrees 
in the ansatz spaces for higher-order problems. 
The projection property of the interpolation operator of the 
$P_1$ non-conforming FEM, also named after Crouzeix and 
Raviart~\cite{CrouzeixRaviart1973}, states that the $L^2$~projection of 
$\nabla H^1_0(\Omega)$ onto the space of piecewise constant functions equals 
the space of piecewise gradients of the non-conforming interpolation of 
$H^1_0(\Omega)$~functions in the $P_1$ non-conforming finite element space.
This property is the basis for the proof of the discrete inf-sup condition 
for the Stokes equations~\cite{CrouzeixRaviart1973} as well as for the 
analysis of adaptive algorithms~\cite{BeckerMaoShi2010}.

Many possible generalizations of the $P_1$ non-conforming FEM to 
higher polynomial degrees have been proposed. All those generalizations 
are either based on a 
modification of the classical concept of degrees of freedom 
\cite{FortinSoulie1983,Fortin1985,StoyanBaran2006},
are restricted to odd polynomial degrees
\cite{CrouzeixFalk1989,ArnoldBrezzi1985},
or employ an enrichment by additional bubble-functions
\cite{MaubachRabier2003,MatthiesTobiska2005}.
However, none of those generalizations possesses a corresponding  
projection property of the interpolation operator for higher moments 
(see Remark~\ref{r:PMPnoteqCR} below).
This paper introduces a novel formulation of the Poisson equation 
(in~\eqref{e:PMPmixedproblem} below) 
based on the Helmholtz decomposition
along with its discretization of arbitrary 
(globally fixed) polynomial degree.
This new discretization approximates directly the gradient of the 
solution, which is often the quantity of interest, instead of the solution 
itself.
For the lowest-order polynomial degree, the discrete 
Helmholtz decomposition of \cite{ArnoldFalk1989} proves equivalence of the 
novel discretization with the known
non-conforming Crouzeix-Raviart FEM \cite{CrouzeixRaviart1973}
and therefore they appear in a natural hierarchy. 
In the context of the novel (mixed) formulation, these discretizations 
turn out to be conforming. 
Although the complexity of the new discretization itself is competitive 
with that of a standard FEM, the method requires the pre-computation of some 
function $\varphi$ such that its divergence equals the right-hand side. If 
this is not computable analytically, this results in an additional 
integration (see also Remark~\ref{r:computationphi} below).
However, this paper focuses on the Poisson problem as a model problem 
to introduce the idea of the new approach and to give a broad impression 
over possible extensions as quadrilateral discretizations 
(including a discrete Helmholtz decomposition on quadrilateral meshes 
for the non-conforming Rannacher-Turek FEM~\cite{RannacherTurek1990} as
a further highlight of this paper), the generalization to three dimensions, 
or inhomogeneous mixed boundary conditions. 
The advantages of the new approach in some 
applications will be the topic of forthcoming 
papers~\cite{SchedensackmLapl2015, SchedensackLE2016}.

The presence of singularities for non-convex domains usually yields the same 
sub-optimal convergence rate for any polynomial degree. This motivates adaptive 
mesh-generation strategies, which recover the optimal convergence rates.
This paper presents an adaptive algorithm and proves its optimal convergence. 
The proof essentially follows ideas from the 
context of the non-conforming Crouzeix-Raviart FEM 
\cite{BeckerMaoShi2010,Rabus2010}. This illustrates 
that the novel discretization generalizes it in a natural way.
Since the 
efficient and reliable error estimator involves a data approximation term 
without a multiplicative power of the mesh-size, the adaptive algorithm is 
based on separate marking.

A possible drawback of the new FEMs is that the gradient of the solution $\nabla u$ is 
approximated, but not the solution $u$ itself. This excludes obvious 
generalizations 
to partial differential equations where $u$ appears in lower-order terms.

The remaining parts of this paper are organized as follows. 
Section~\ref{s:notation} defines some notation.
Section~\ref{s:PMPformulation} introduces the novel formulation 
based on the Helmholtz decomposition and its discretization 
together with an a~priori error estimate.
The equivalence with the $P_1$ non-conforming FEM for the 
lowest-order case is proved in Subsection~\ref{ss:PMPCR}.
Section~\ref{s:PMPremarks} summarizes some generalizations.
Section~\ref{s:PMPmedius} is devoted to a medius analysis 
of the FEM, which uses a~posteriori techniques to derive a~priori error 
estimates. Section~\ref{s:PMPafem} proves quasi-optimality 
of an adaptive algorithm, while Section~\ref{s:PMP3D} outlines the generalization 
to 3D. Section~\ref{s:PMPnumerics} concludes this 
paper with numerical experiments.

\section{Notation}\label{s:notation}

Throughout this paper
$\Omega\subseteq\R^2$ is a simply connected, bounded, polygonal Lip\-schitz 
domain.
Standard notation on Lebesgue and Sobolev
spaces and their norms is employed with $L^2$ scalar product 
$(\bullet,\bullet)_{L^2(\Omega)}$. Given a Hilbert space $X$, 
let $L^2(\Omega;X)$ resp.\ $H^k(\Omega;X)$ denote the space of functions with 
values in $X$ whose components are in $L^2(\Omega)$ resp.\ $H^k(\Omega)$
and let $L^2_0(\Omega)$ denote the subset of $L^2(\Omega)$ of functions with 
vanishing integral mean.
The space of $L^2$ functions whose weak divergence exists and is in $L^2$ is 
denoted with $H(\ddiv,\Omega)$.
The space of infinitely differentiable 
functions reads $C^\infty(\Omega)$ and the subspace of functions 
with compact support in $\Omega$ is denoted with $C^\infty_c(\Omega)$.
The piecewise action of differential operators
is denoted with a subscript $\NC$.
The formula $A\lesssim B$ represents an inequality $A\leq CB$ 
for some mesh-size independent, positive generic constant
$C$; $A \approx B$ abbreviates $A \lesssim B \lesssim A$.
By convention, all generic constants $C\approx 1$ do neither
depend on the mesh-size nor on the level of a triangulation
but may depend on the fixed coarse 
triangulation $\tri_0$ and its interior angles.
The Curl operator in two dimensions is defined by 
$\Curl\beta:=(\partial \beta/\partial x_2, -\partial \beta/\partial x_1)$ for 
sufficiently smooth $\beta$.

A shape-regular triangulation $\tri$ of a bounded, polygonal, open
Lipschitz domain $\Omega\subseteq \mathbb{R}^2$ is a set of closed triangles $T\in \tri$
such that $\overline{\Omega}=\bigcup\tri$ and any two distinct 
triangles are either disjoint or share exactly 
one common edge or one vertex.
Let $\edges(T)$ denote the edges of a triangle $T$
and $\edges:=\edges(\tri):=\bigcup_{T\in\tri} \edges(T)$ the set of edges 
in $\tri$. 
Any edge $E\in\edges$ is associated with a fixed orientation of the unit normal 
$\nu_E$ on $E$ (and $\tau_E=(0,-1;1,0)\nu_E$ denotes the unit tangent on $E$). 
On the boundary, $\nu_E$ is the outer unit normal of $\Omega$,
while for interior edges $E\not\subseteq\partial\Omega$, the orientation is 
fixed through the choice of the triangles $T_+\in\tri$ and $T_-\in\tri$ with 
$E=T_+\cap T_-$ and $\nu_E:=\nu_{T_+}\vert_E$ is then the outer normal of $T_+$ on $E$.
In this situation, $[v]_E:=v\vert_{T_+}-v\vert_{T_-}$ denotes the jump across 
$E$. For an edge $E\subseteq\partial\Omega$ on the boundary, the jump across 
$E$ reads $[v]_E:=v$.
For $T\in\tri$ and $X\subseteq \R^n$, let 
\begin{align*}
P_k(T;X) &:= \left\{v:T\rightarrow X \left\vert 
     \begin{array}{l}
          \text{each component}\text{ of }v
    \text{ is a polynomial}\\
     \text{of total degree}\leq k
     \end{array}
   \right\}\right.;\\
P_k(\tri;X) &:= \{v:\Omega\rightarrow X\;| \;
\forall T\in\tri:\;v|_T \in P_k(T;X)\}
\end{align*}
denote the set of piecewise polynomials and $P_k(\tri):=P_k(\tri;\R)$. 
Given a subspace $X\subseteq L^2(\Omega;\R^n)$, 
let $\Pi_{X}:L^2(\Omega;\R^n)\to X$
denote the $L^2$ projection onto $X$ and let $\Pi_k$ abbreviate 
$\Pi_{P_k(\tri;\R^n)}$.
Given a triangle $T\in\tri$, let $h_T:=(\mathrm{meas}_2(T))^{1/2}$ 
denote the square root of the area of 
$T$ and 
let $h_\tri\in P_0(\tri)$ denote the piecewise constant mesh-size 
with $h_\tri\vert_T:=h_T$ for all $T\in\tri$. 
For a set of triangles $\mathcal{M}\subseteq\tri$,
let $\|\bullet\|_{\mathcal{M}}$ abbreviate 
\begin{align*}
 \|\bullet\|_{\mathcal{M}}:=\sqrt{\sum_{T\in\mathcal{M}}
         \|\bullet\|_{L^2(T)}^2}.
\end{align*}
Given an initial triangulation $\tri_0$, an admissible triangulation is 
a regular triangulation which can be created from $\tri_0$ by newest-vertex 
bisection \cite{Stevenson2008}. The set of admissible triangulations is 
denoted by $\mathbb{T}$.

\section{Problem formulation and discretization}\label{s:PMPformulation}

This section introduces the new formulation based on the 
Helmholtz decomposition in Subsection~\ref{ss:PMPformulation}
and its discretization in Subsection~\ref{ss:PMPdiscretisation}.
Subsection~\ref{ss:PMPCR} discusses the equivalence 
with the $P_1$ non-conforming Crouzeix-Raviart FEM \cite{CrouzeixRaviart1973}.

\subsection{New mixed formulation of the Poisson problem}\label{ss:PMPformulation}

Given the simply connected, bounded, polygonal Lipschitz domain 
$\Omega\subseteq \R^2$ and 
$f\in L^2(\Omega)$, the Poisson model problem seeks 
$u\in H^1_0(\Omega)$ with 
\begin{align}\label{e:PMPstrongform}
 -\Delta u = f \text{ in }\Omega
 \qquad\text{and}\qquad 
 u=0 \text{ on }\partial \Omega.
\end{align}
The novel weak formulation is based on the classical Helmholtz decomposition \cite{Rudin1976}
\begin{align}\label{e:HelmholtzDec}
 L^2(\Omega;\R^2) = \nabla H^1_0(\Omega)\oplus \Curl (H^1(\Omega)\cap 
L^2_0(\Omega))
\end{align}
for any simply connected domain $\Omega\subseteq\R^2$,
where the sum is orthogonal with respect to the 
$L^2$ scalar product.

\begin{remark}
Note that for $\Omega\subseteq\R^2$, the definition of the Curl implies 
\begin{equation*}
 H(\Curl,\Omega)
  :=\{\beta\in L^2(\Omega)\mid \Curl\beta\in L^2(\Omega)\}
  = H^1(\Omega).
 \qedhere
\end{equation*}
\end{remark}

Define $X:=L^2(\Omega;\R^2)$ and $Y:=H^1(\Omega)\cap L^2_0(\Omega)$ and 
let $\varphi\in H(\ddiv,\Omega)$ satisfy $-\ddiv\varphi = f$.
The novel weak formulation of the Poisson problem~\eqref{e:PMPstrongform}
seeks $(p,\alpha)\in X\times Y$ with
\begin{equation}\label{e:PMPmixedproblem}
\begin{aligned}
 (p,q)_{L^2(\Omega)} + (q,\Curl \alpha)_{L^2(\Omega)} &= 
(\varphi,q)_{L^2(\Omega)}
 &&\text{ for all }q\in X,\\
 (p,\Curl \beta)_{L^2(\Omega)} &= 0 
 &&\text{ for all }\beta\in Y.
\end{aligned}
\end{equation}
This formulation is the point of departure 
for the numerical approximation of $\nabla u$ in 
Subsection~\ref{ss:PMPdiscretisation}.

\begin{remark}[existence of solutions]\label{r:existence}
Since $\Curl Y\subseteq X$, any $\beta\in Y$ satisfies the inf-sup condition
\begin{align*}
 \left\|\Curl\beta\right\|_{L^2(\Omega)}
   \leq \sup_{q\in X\setminus\{0\}} 
       \frac{(q,\Curl\beta)_{L^2(\Omega)}}{\|q\|_{L^2(\Omega)}}.
\end{align*}
This and Brezzi's splitting lemma \cite{Brezzi1974} 
imply the unique existence of a solution $(p,\alpha)\in X\times Y$
to \eqref{e:PMPmixedproblem}.
The $L^2$ orthogonality of $p$ and $\Curl \alpha$ 
implies
\begin{align*}%\label{e:PMPnormpalpha}
 \|p\|_{L^2(\Omega)}^2 + \left\|\Curl\alpha\right\|_{L^2(\Omega)}^2 
  = \|\varphi\|_{L^2(\Omega)}^2.
\end{align*}
\end{remark}

\begin{remark}[equivalence of \eqref{e:PMPstrongform} and \eqref{e:PMPmixedproblem}]
The second equation of~\eqref{e:PMPmixedproblem} and the Helmholtz 
decomposition~\eqref{e:HelmholtzDec} imply the existence 
of $\widetilde{u}\in H^1_0(\Omega)$ with $p=\nabla\widetilde{u}$.
Since $\varphi\in H(\ddiv,\Omega)$ satisfies $-\ddiv\varphi=f$,
the $L^2$ orthogonality in \eqref{e:HelmholtzDec} implies that
any $v\in H^1_0(\Omega)$ satisfies
\begin{align*}
 (p,\nabla v)_{L^2(\Omega)} 
  = (\varphi,\nabla v)_{L^2(\Omega)} 
  = (f,v)_{L^2(\Omega)} 
\end{align*}
and, hence, $\widetilde{u}$ solves~\eqref{e:PMPstrongform}.
\end{remark}

\begin{remark}[mixed boundary conditions]\label{r:boundary}
Let $\partial\Omega=\Gamma_D\cup\Gamma_N$
with $\Gamma_D$ closed, $\Gamma_D\cap\Gamma_N=\emptyset$, and 
each connectivity component of $\Gamma_D$ has positive length.
Assume that the triangulation resolves $\Gamma_D$.
Let $H^{-1/2}(\Gamma_N)$ denote the space of generalized normal traces 
of $H(\ddiv,\Omega)$ functions and
let $u_D\in H^1(\Omega)$ and $g\in H^{-1/2}(\Gamma_N)$ 
in the sense that there holds $g=q\cdot\nu$ on $\Gamma_N$ in the 
sense of distributions for some $q\in H(\ddiv,\Omega)$.
Consider the mixed boundary value problem 
$-\Delta u = f$ in $\Omega$ with $u\vert_{\Gamma_D} = u_D$ on $\Gamma_D$
and $(\nabla u\cdot\nu)\vert_{\Gamma_N}=g$ on $\Gamma_N$.
Let $H^1_{D}(\Omega)$ denote the subspace of $H^1(\Omega)$ of functions 
with vanishing trace on $\Gamma_D$. For $\Gamma_D=\emptyset$, define 
$H^1_D(\Omega):=H^1(\Omega)\cap L^2_0(\Omega)$. Define 
\begin{align*}
 H^1_\star(\Omega)&:=\{\beta\in Y\mid
\beta\text{ is constant on each connectivity 
component of }\Gamma_N\}.
\end{align*}
The Helmholtz decomposition
\begin{align*}
 L^2(\Omega;\R^2) 
  =\nabla H^1_{D}(\Omega)\oplus \Curl H^1_\star(\Omega)
\end{align*}
for mixed 
boundary conditions \cite[Corollary~3.1]{GiraultRaviart1986}
then leads to the following formulation.
Let $\varphi\in H(\ddiv,\Omega)$ with $-\ddiv\varphi = f$
additionally fulfil the boundary condition
$\varphi\nu\vert_{\Gamma_N}=g$ and seek 
$(p,\alpha)\in L^2(\Omega;\R^2)\times H^1_\star(\Omega)$
with 
\begin{equation*}
\begin{aligned}
 (p,q)_{L^2(\Omega)} + (q,\Curl \alpha)_{L^2(\Omega)} &= 
(\varphi,q)_{L^2(\Omega)}
 &&\text{ for all }q\in L^2(\Omega;\mathbb{R}^2),\\
 (p,\Curl \beta)_{L^2(\Omega)} &= (\nabla u_D,\Curl\beta)_{L^2(\Omega)} 
 &&\text{ for all }\beta\in H^1_\star(\Omega).
\end{aligned}
\end{equation*}
Since $p=\varphi-\Curl\alpha\in H(\ddiv,\Omega)$,
the equivalence follows as in Subsection~\ref{ss:PMPformulation}
and with 
\begin{align*}
 (p\cdot\nu)\vert_{\Gamma_N}
 = (\varphi\cdot\nu)\vert_{\Gamma_N}
   - (\Curl\alpha\cdot\nu)\vert_{\Gamma_N}
 = g - (\nabla\alpha\cdot\tau)\vert_{\Gamma_N}
 = g.
\end{align*}
\end{remark}

\begin{remark}[multiply connected domains]
If $\Omega\subseteq\R^2$ is a multiply connected polygonal bounded Lipschitz 
domain and $\partial\Omega=\Gamma_D\cup\Gamma_N$, such that all parts of 
$\Gamma_D$ lie on the outer boundary of $\Omega$ (on the unbounded connectivity 
component of $\R^2\setminus\Omega$), then the Helmholtz decomposition of 
Remark~\ref{r:boundary} still holds and
a discretization as above is then immediate.
However, if the Dirichlet boundary $\Gamma_D$ also covers parts of interior 
boundary, that Helmholtz 
decomposition does no longer hold: There exist harmonic functions which are 
constant on different parts of $\Gamma_D$ and, hence, are neither in $\nabla 
H^1_{\Gamma_D}(\Omega)$, nor in $\Curl H^1_\star(\Omega)$.
\end{remark}

\begin{remark}[computation of $\varphi$]\label{r:computationphi}
The computation of $\varphi$ appears as a practical difficulty because
$\varphi$ needs to be defined through an integration of $f$. If $f$ has some simple 
structure, e.g., $f$ is polynomial, this can be done manually, while for more 
complicated $f$, a numerical integration of $f$ has to be employed, 
but is possible in parallel. 
\end{remark}

\subsection{Discretization}\label{ss:PMPdiscretisation}

Let $\tri$ be a regular triangulation of $\Omega$ and 
$k\in\mathbb{N}\cup\{0\}$ and define 
\begin{align*}
 X_h(\tri):=P_k(\tri;\mathbb{R}^2)
 \quad\text{and}\quad
 Y_h(\tri):=P_{k+1}(\tri)\cap Y.
\end{align*}
The discretization of \eqref{e:PMPmixedproblem} seeks 
$p_h\in X_h(\tri)$ and $\alpha_h\in Y_h(\tri)$ with
\begin{subequations}\label{e:PMPdP}
\end{subequations}
%
%\begin{equation}%\label{e:PMPdP}
\begin{align}
 (p_h,q_h)_{L^2(\Omega)} + (q_h,\Curl \alpha_h)_{L^2(\Omega)} &= 
(\varphi,q_h)_{L^2(\Omega)}
 &&\text{ for all }q_h\in X_h(\tri),
  \tag{\ref{e:PMPdP}.a}\label{e:PMPdPeq1}\\
 (p_h,\Curl \beta_h)_{L^2(\Omega)} &= 0 
 &&\text{ for all }\beta_h\in Y_h(\tri).
  \tag{\ref{e:PMPdP}.b}\label{e:PMPdPeq2}
\end{align}
%\end{equation}
% To the best of the authors knowledge, this discretization of 
% the Poisson problem has not been considered before.

\begin{remark}
Since there are no continuity conditions on $q_h\in X_h(\tri)$
and since $\Curl Y_h(\tri) \subseteq X_h(\tri)$, 
the first equation is fulfilled in a strong form, i.e.,
\begin{align*}
  p_h + \Curl\alpha_h = \Pi_k\varphi.
\end{align*}
In contrast to classical finite element methods, 
the approximation $p_h$ of $\nabla u$ is a gradient only 
in a \emph{discrete orthogonal} sense, namely \eqref{e:PMPdPeq2}. 
For $k=0$, Subsection~\ref{ss:PMPCR} below proves that 
this \emph{discrete orthogonal gradient property} is equivalent to 
being a non-conforming gradient of a Crouzeix-Raviart 
finite element function. 
The main motivation of the novel formulation is the generalization 
of this scheme to any polynomial degree $k$.
\end{remark}

\begin{remark}[existence of discrete solutions]\label{r:dexistence}
Since $\Curl Y_h(\tri) \subseteq X_h(\tri)$, the discrete 
inf-sup condition 
\begin{equation*}
 \left\|\Curl\beta_h\right\|_{L^2(\Omega)}
   \leq \sup_{q_h\in X_h(\tri)\setminus\{0\}} 
       \frac{(q_h,\Curl\beta_h)_{L^2(\Omega)}}{\|q_h\|_{L^2(\Omega)}}
 \qquad\text{for all }\beta_h\in Y_h(\tri)
\end{equation*}
is fulfilled.
This and Brezzi's splitting lemma \cite{Brezzi1974} imply the unique 
existence of a solution $(p_h,\alpha_h)\in X_h(\tri)\times Y_h(\tri)$ 
to \eqref{e:PMPdP}.
The equality in 
\begin{align*}
 \|p_h\|_{L^2(\Omega)}^2 + \left\|\Curl\alpha_h\right\|_{L^2(\Omega)}^2 
   = \|\Pi_k\varphi\|_{L^2(\Omega)}^2
   \leq \|\varphi\|_{L^2(\Omega)}^2.
\end{align*}
follows from the 
$L^2$ orthogonality of $p_h$ and $\Curl \alpha_h$.
\end{remark}

The conformity of the method and the inf-sup conditions from 
Remarks~\ref{r:existence} and \ref{r:dexistence}
imply the following best-approximation result.

\begin{theorem}[best-approximation]\label{t:PMPbestapprox}
The solution $(p,\alpha)\in X\times Y$
to \eqref{e:PMPmixedproblem} and the discrete solution 
$(p_h,\alpha_h)\in X_h(\tri)\times Y_h(\tri)$ of \eqref{e:PMPdP}
satisfy
\begin{align}
  &\|p-p_h\|_{L^2(\Omega)} + \left\|\Curl 
 (\alpha-\alpha_h)\right\|_{L^2(\Omega)}
 \label{e:bestapprox}\\
 &\qquad\qquad 
  \lesssim \Big(\min_{q_h\in X_h(\tri)} \|p-q_h\|_{L^2(\Omega)}
   +\min_{\beta_h\in Y_h(\tri)} 
     \left\|\Curl (\alpha-\beta_h)\right\|_{L^2(\Omega)}\Big).
  \tag*{\qed}
\end{align}
\end{theorem}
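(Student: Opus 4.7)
The plan is to invoke the standard Brezzi framework for conforming mixed discretizations. The bilinear form $a(p,q) := (p,q)_{L^2(\Omega)}$ is the $L^2$ inner product on $X$ and is therefore coercive with constant $1$ on the whole of $X$; the form $b(q,\beta) := (q,\Curl\beta)_{L^2(\Omega)}$ satisfies the inf-sup condition with constant $1$ on both the continuous and the discrete level, since $\Curl Y\subseteq X$ and $\Curl Y_h(\tri)\subseteq X_h(\tri)$ allow the test choice $q = \Curl\beta$ (as already observed in Remarks~\ref{r:existence} and~\ref{r:dexistence}). Conformity $X_h(\tri)\subseteq X$ and $Y_h(\tri)\subseteq Y$ combined with these two ingredients is exactly the hypothesis of the abstract Brezzi best-approximation theorem, which yields \eqref{e:bestapprox} immediately.

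For a self-contained argument I would subtract the discrete system \eqref{e:PMPdP} from \eqref{e:PMPmixedproblem} tested on the discrete spaces, obtaining the Galerkin orthogonalities
\begin{align*}
(p-p_h,q_h)_{L^2(\Omega)} + (q_h,\Curl(\alpha-\alpha_h))_{L^2(\Omega)} &= 0 \quad\text{for all }q_h\in X_h(\tri),\\
(p-p_h,\Curl\beta_h)_{L^2(\Omega)} &= 0 \quad\text{for all }\beta_h\in Y_h(\tri).
\end{align*}
First I would exploit $\Curl Y_h(\tri)\subseteq \Curl Y$: testing \eqref{e:PMPdPeq1} with $q_h=\Curl\beta_h$ for $\beta_h\in Y_h(\tri)$ and comparing with the analogous identity for the continuous problem identifies $\Curl\alpha_h$ as the $L^2$-orthogonal projection of $\Curl\alpha$ onto $\Curl Y_h(\tri)$, giving the sharper identity $\|\Curl(\alpha-\alpha_h)\|_{L^2(\Omega)} = \min_{\beta_h\in Y_h(\tri)} \|\Curl(\alpha-\beta_h)\|_{L^2(\Omega)}$.

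For the first error component, I would fix an arbitrary $q_h\in X_h(\tri)$, expand $\|p-p_h\|_{L^2(\Omega)}^2$ via the splitting $p-p_h=(p-q_h)+(q_h-p_h)$, and use the first Galerkin orthogonality to replace the inner-product term $(p-p_h,q_h-p_h)_{L^2(\Omega)}$ by $-(q_h-p_h,\Curl(\alpha-\alpha_h))_{L^2(\Omega)}$. Cauchy--Schwarz together with the absorbing identity $2ab-b^2\leq a^2$ then produces
\begin{equation*}
\|p-p_h\|_{L^2(\Omega)}^2 \leq \|p-q_h\|_{L^2(\Omega)}^2 + \|\Curl(\alpha-\alpha_h)\|_{L^2(\Omega)}^2.
\end{equation*}
Taking the minimum over $q_h\in X_h(\tri)$ and combining with the projection identity from the previous paragraph yields \eqref{e:bestapprox}.

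The argument is essentially bookkeeping: there is no genuine obstacle because the inf-sup constants equal one and conformity lets the two systems be subtracted directly. The only point that requires attention is choosing compatible test functions so that the mixed cross-term in the first Galerkin orthogonality can be absorbed into the two best-approximation terms; the nestedness $\Curl Y_h(\tri)\subseteq\Curl Y$ is what turns the $\alpha$-error bound into a pure projection estimate.
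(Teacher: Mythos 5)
Your first paragraph is essentially the paper's entire proof: the paper simply notes that conformity ($X_h(\tri)\subseteq X$, $Y_h(\tri)\subseteq Y$) together with the inf-sup conditions from Remarks~\ref{r:existence} and \ref{r:dexistence} implies the best-approximation estimate via the standard Brezzi theory, and it ends there (the paper then observes in a follow-up remark that a direct analysis of the form $\mathcal{B}$ in~\eqref{e:PMPbilinearformB} yields an inf-sup constant of $5$, so the hidden constant in~\eqref{e:bestapprox} is $5$).

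Your self-contained argument goes genuinely beyond the paper's proof and is worth noting. By combining both Galerkin orthogonalities (the second one makes $(p-p_h,\Curl\beta_h)_{L^2(\Omega)}$ vanish, so testing the first with $q_h=\Curl\beta_h$ isolates $(\Curl\beta_h,\Curl(\alpha-\alpha_h))_{L^2(\Omega)}=0$) you identify $\Curl\alpha_h$ as the exact $L^2$-orthogonal projection of $\Curl\alpha$ onto $\Curl Y_h(\tri)$, giving an equality for the $\alpha$-component rather than a best-approximation estimate with a generic constant. The splitting argument for $p-p_h$ then yields a constant of order $1$--$2$ rather than the $5$ quoted in the paper's remark. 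One small caveat: the intermediate inequality $\|p-p_h\|^2\leq\|p-q_h\|^2+\|\Curl(\alpha-\alpha_h)\|^2$ is cleanest to justify with the specific choice $q_h=\Pi_{X_h(\tri)}p$, for which $(p-q_h,q_h-p_h)_{L^2(\Omega)}=0$ gives Pythagoras $\|p-p_h\|^2=\|p-q_h\|^2+\|q_h-p_h\|^2$ and the Galerkin orthogonality gives $\|q_h-p_h\|\leq\|\Curl(\alpha-\alpha_h)\|$; since this minimizes $\|p-q_h\|$ anyway, the final bound is the same, but you should make this choice explicit rather than asserting the intermediate inequality for arbitrary $q_h$, where the absorption you invoke would need extra steps to control $\|q_h-p_h\|$.
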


\begin{remark}
A direct analysis of the bilinear form 
$\mathcal{B}:\big(X\times Y\big)
\times \big(X\times Y\big) \to\R$ defined by
\begin{align}\label{e:PMPbilinearformB}
 \mathcal{B}((p,\alpha),(q,\beta))
 := (p,q)_{L^2(\Omega)} + (q,\Curl \alpha)_{L^2(\Omega)} + (p,\Curl 
\beta)_{L^2(\Omega)}
\end{align}
for all $p,q\in X$ and all $\alpha,\beta\in Y$
reveals that the inf-sup constant of $\mathcal{B}$ equals $5$
and, hence, the constant hidden in $\lesssim$ in \eqref{e:bestapprox} is $5$.
\end{remark}

\begin{remark}\label{r:globalinfsup}
The best-approximation of Theorem~\ref{t:PMPbestapprox}
contains the term
\begin{align*}
 \min_{\beta_h\in Y_h(\tri)} 
     \left\|\Curl (\alpha-\beta_h)\right\|_{L^2(\Omega)}
\end{align*}
on the right-hand side, which depends on the choice of $\varphi$.
This seems to be worse than the best-approximation 
results for standard FEMs, which do not involve 
such a term. However, if $\varphi$ is chosen smooth enough,
then $\Curl\alpha = \varphi - \nabla u$ has at least 
the same regularity as $\nabla u$, and therefore the convergence 
rate is not diminished.
On the other hand, the approximation space for $p$ does 
not have any continuity restriction and so the first 
approximation term
\begin{align}\label{e:PMPdiscontBestApprox}
  \min_{q_h\in X_h(\tri)} \|p-q_h\|_{L^2(\Omega)}
\end{align}
is superior to the best-approximation of a standard FEM,
where $p=\nabla u$ is approximated with gradients 
of finite element functions. However, 
\cite[Theorem~3.2]{Veeser2014} and the comparison results of 
\cite{CarstensenPeterseimSchedensack2012} prove equivalence 
of \eqref{e:PMPdiscontBestApprox} and the best-approximation with gradients 
of a standard FEM up to some multiplicative constant.
\end{remark}

The following lemma proves a projection property.
This means that for any $v\in H^1_0(\Omega)$, 
the best-approximation of $\nabla v$ in $X_h(\tri)$
is a \emph{discrete orthogonal gradient} in the sense that it is orthogonal 
to $\Curl Y_h(\tri)$ and so belongs to the set of 
\emph{discrete orthogonal gradients} $W_h(\tri)$ defined by
\begin{align}\label{e:PMPMh}
 W_h(\tri) :=\{q_h\in X_h(\tri)\mid (q_h,\Curl\beta_h)_{L^2(\Omega)}=0
\text{ for all }\beta_h\in Y_h(\tri)\}.
\end{align}
The projection property is the key ingredient in the  
optimality analysis of Section~\ref{s:PMPafem}.

\begin{lemma}[projection property]\label{l:PMPintegralmean}
It holds that $W_h(\tri)=\Pi_{X_h(\tri)}\nabla H^1_0(\Omega)$.
Moreover, if $\tri_\star$ is an admissible refinement of $\tri$, 
then $\Pi_{X_h(\tri)}W_h(\tri_\star)= W_h(\tri) $.
\end{lemma}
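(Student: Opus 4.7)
The plan is to prove part (1) by showing that both $W_h(\tri)$ and $\Pi_{X_h(\tri)}\nabla H^1_0(\Omega)$ have the same $L^2$-orthogonal complement inside $X_h(\tri)$, and then to derive part (2) as a short consequence of part (1) using the nestedness $X_h(\tri)\subseteq X_h(\tri_\star)$.

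The easy inclusion $\Pi_{X_h(\tri)}\nabla H^1_0(\Omega)\subseteq W_h(\tri)$ is essentially immediate: for $v\in H^1_0(\Omega)$ and $\beta_h\in Y_h(\tri)$, the function $\Curl\beta_h$ lies in $X_h(\tri)$, so $(\Pi_{X_h(\tri)}\nabla v,\Curl\beta_h)_{L^2(\Omega)}=(\nabla v,\Curl\beta_h)_{L^2(\Omega)}=0$ by the continuous Helmholtz decomposition \eqref{e:HelmholtzDec}. Hence $\Pi_{X_h(\tri)}\nabla v\in W_h(\tri)$.

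For the reverse inclusion I would compute the $L^2$-orthogonal complement inside $X_h(\tri)$ of both sides. The complement of $W_h(\tri)$ is by definition $\Curl Y_h(\tri)$ (using $\Curl Y_h(\tri)\subseteq X_h(\tri)$). The complement of $\Pi_{X_h(\tri)}\nabla H^1_0(\Omega)$ consists of those $q_h\in X_h(\tri)$ with $(q_h,\nabla v)_{L^2(\Omega)}=0$ for all $v\in H^1_0(\Omega)$, which by \eqref{e:HelmholtzDec} means $q_h=\Curl\beta$ for some $\beta\in Y$. The crux of the argument is then a local regularity observation: if $\Curl\beta\in P_k(\tri;\R^2)$, then on every triangle $T$ both partial derivatives of $\beta|_T$ are polynomials of degree at most $k$, so $\beta|_T\in P_{k+1}(T)$; combined with $\beta\in H^1(\Omega)\cap L^2_0(\Omega)$ this yields $\beta\in Y_h(\tri)$ and therefore $q_h\in\Curl Y_h(\tri)$. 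The two orthogonal complements coincide, which gives equality of the two subspaces.

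For part (2), since any $\tri_\star$ produced by newest-vertex bisection refines the piecewise polynomial spaces, $X_h(\tri)\subseteq X_h(\tri_\star)$, and hence $\Pi_{X_h(\tri)}\Pi_{X_h(\tri_\star)}=\Pi_{X_h(\tri)}$. Applying part (1) to both triangulations then gives
\begin{equation*}
\Pi_{X_h(\tri)}W_h(\tri_\star)
=\Pi_{X_h(\tri)}\Pi_{X_h(\tri_\star)}\nabla H^1_0(\Omega)
=\Pi_{X_h(\tri)}\nabla H^1_0(\Omega)
=W_h(\tri).
\end{equation*}
The main obstacle is really the local polynomial-lifting step in part (1) (recovering $\beta\in P_{k+1}(\tri)$ from $\Curl\beta\in P_k(\tri;\R^2)$); once this is in place, both statements follow by linear-algebra bookkeeping combined with the continuous and discrete Helmholtz orthogonalities.
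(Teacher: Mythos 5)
Your proof is correct. For part (1) you rely on the same key technical ingredient as the paper, namely the local polynomial lifting: if $\beta\in Y$ and $\Curl\beta\in X_h(\tri)=P_k(\tri;\R^2)$, then on each triangle both partials of $\beta$ are in $P_k$, hence $\beta\in P_{k+1}(\tri)\cap Y=Y_h(\tri)$. You package this differently, though: you compute the $L^2$-orthogonal complement of each of $W_h(\tri)$ and $\Pi_{X_h(\tri)}\nabla H^1_0(\Omega)$ inside $X_h(\tri)$ and identify both with $\Curl Y_h(\tri)$, whereas the paper fixes $p_h\in W_h(\tri)$, solves an auxiliary variational problem to obtain $u\in H^1_0(\Omega)$ with $\Pi_{X_h(\tri)}\nabla u$ equal to the projection of $p_h$ onto $\Pi_{X_h(\tri)}\nabla H^1_0(\Omega)$, applies the same lifting to the residual $p_h-\Pi_{X_h(\tri)}\nabla u=\Curl\alpha$, and concludes $\alpha=0$ from $p_h\in W_h(\tri)$. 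The two arguments carry the same content; yours is a bit more symmetric and avoids introducing the auxiliary problem. For part (2) the paper merely remarks that a similar argument applies, whereas your derivation via $X_h(\tri)\subseteq X_h(\tri_\star)$, hence $\Pi_{X_h(\tri)}\Pi_{X_h(\tri_\star)}=\Pi_{X_h(\tri)}$, combined with two applications of part (1), is a clean and explicit alternative that makes the mechanism transparent.
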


\begin{proof}
Let $q\in \nabla H^1_0(\Omega)$.
Since $\Curl Y_h(\tri)\subseteq X_h(\tri)$
and $Y_h(\tri)\subseteq Y$, the orthogonality in the Helmholtz 
decomposition~\eqref{e:HelmholtzDec} implies for any $\beta_h\in Y_h(\tri)$ 
that
\begin{align*}
 (\Pi_{X_h(\tri)} q,\Curl\beta_h)_{L^2(\Omega)}
 = (q,\Curl\beta_h)_{L^2(\Omega)}
 =0.
\end{align*}
This proves $\Pi_{X_h(\tri)}\nabla H^1_0(\Omega)\subseteq W_h(\tri)$.
For the converse direction, let $p_h\in W_h(\tri)$ and let $u\in H^1_0(\Omega)$
be a solution (possibly not unique) to 
\begin{align*}
  (\Pi_{X_h(\tri)}\nabla u,\Pi_{X_h(\tri)}\nabla v)_{L^2(\Omega)}
    = (p_h,\Pi_{X_h(\tri)}\nabla v)_{L^2(\Omega)}
    \qquad\text{for all }v\in H^1_0(\Omega).
\end{align*}
The orthogonality of $p_h-\Pi_{X_h(\tri)}\nabla u$ to $\nabla H^1_0(\Omega)$
implies the existence of $\alpha\in Y$ such that 
$p_h-\Pi_{X_h(\tri)}\nabla u=\Curl\alpha$. Therefore, $\Curl\alpha\in X_h(\tri)$
and, hence, $\alpha$ is a piecewise polynomial of degree $\leq k+1$ and 
therefore $\alpha\in Y_h(\tri)$. 
But since $p_h\in W_h(\tri)$, it holds that 
\begin{align*}
  \left\|\Curl \alpha\right\|_{L^2(\Omega)}^2 
    = (p_h-\Pi_{X_h(\tri)}\nabla u,\Curl \alpha)_{L^2(\Omega)}
    =0
\end{align*}
and, hence, $\alpha=0$. This proves $\Pi_{X_h(\tri)}\nabla u=p_h$ and, 
therefore, $W_h(\tri)\subseteq \Pi_{X_h(\tri)}\nabla H^1_0(\Omega)$.

A similar proof applies in the discrete case
and proves $\Pi_{X_h(\tri)} W_h(\tri_\star= W_h(\tri) $.
\end{proof}

\begin{remark}[computational costs]%\label{r:PMPschur}
Problem~\eqref{e:PMPdP} is equivalent to the problem: 
Find $(p_h,\alpha_h)\in X_h(\tri)\times Y_h(\tri)$ such that 
\begin{align*}
  (\Curl\beta_h,\Curl\alpha_h)_{L^2(\Omega)}
    &= (\varphi,\Curl\beta_h)_{L^2(\Omega)}
    \qquad\text{for all }\beta_h\in Y_h(\tri),\\
  p_h&=\Pi_{X_h(\tri)}\varphi - \Curl\alpha_h.
\end{align*}
Therefore, the system matrix is (in 2D) the same than 
that of a standard FEM (up to degrees of freedom on the boundary).
\end{remark}

\subsection{Equivalence with Crouzeix-Raviart FEM}\label{ss:PMPCR}

The non-conforming Crouzeix-Raviart finite element space 
\cite{CrouzeixRaviart1973} reads
\begin{align*}
 \mathrm{CR}^1_0(\tri):=
   \left\{ v_\mathrm{CR}\in P_1(\tri)
    \left\vert 
      \begin{array}{l}
         v_\mathrm{CR}\text{ is continuous at midpoints of interior edges}\\ 
         \text{and vanishes at midpoints of boundary edges}
      \end{array}
  \right\}
  \right..
\end{align*}
Since $\mathrm{CR}^1_0(\tri)\not\subseteq H^1_0(\Omega)$ (if the triangulation 
consists of more than one triangle), the weak gradient of 
a function $v_\mathrm{CR}\in \mathrm{CR}^1_0(\tri)$ does not exist in general. 
However, 
the piecewise version $\nabla_\NC v_\mathrm{CR}\in P_0(\tri;\R^2)$ defined by 
$(\nabla_\NC v_\mathrm{CR})\vert_T:=\nabla(v_\mathrm{CR}\vert_T)$ for all 
$T\in\tri$ exists.
The $P_1$ non-conforming discretization of the Poisson problem 
seeks $u_\mathrm{CR}\in \mathrm{CR}^1_0(\tri)$ with 
\begin{align}\label{e:PMPCRdP}
 (\nabla_\NC u_\mathrm{CR},\nabla_\NC v_\mathrm{CR})_{L^2(\Omega)}
  = (f,v_\mathrm{CR})_{L^2(\Omega)}
  \qquad\text{for all }v_\mathrm{CR}\in\mathrm{CR}^1_0(\tri).
\end{align}
The lowest-order space of Raviart-Thomas 
finite element functions \cite{RaviartThomas1977} 
reads
\begin{align}\label{e:PMPdefRT}
\mathrm{RT}_0(\tri):=\left\{q_\mathrm{RT}\in H(\ddiv,\Omega)\left\vert 
   \begin{array}{l}
     \forall T\in\tri\, \exists a_T\in\R^2, b_T\in \R\\
    \text{with } q_\mathrm{RT}(x) = a_T + b_T x
   \end{array}
   \right\}\right..
\end{align}
The Raviart-Thomas functions have the property that
the integration by parts formula holds for functions in $H^1_0(\Omega)$ 
as well as for functions in $\mathrm{CR}^1_0(\tri)$.

The following proposition proves the equivalence of the 
$P_1$ non-conforming discretization and 
the discretization~\eqref{e:PMPdP} for $k=0$.
Note that the discretization~\eqref{e:PMPCRdP} is a non-conforming discretization,
while the discretization~\eqref{e:PMPdP} is a conforming one.

\begin{proposition}[equivalence with CR-NCFEM]\label{p:PMPequivCR}
Let $f\in P_0(\tri)$ be piecewise constant
and let $\varphi_\mathrm{RT}\in\mathrm{RT}_0(\tri)$ satisfy
$-\ddiv \varphi_\mathrm{RT}= f$.
Then the discrete solution $(p_h,\alpha_h)\in 
P_0(\tri;\R^2)\times (P_1(\tri)\cap Y)$
to \eqref{e:PMPdP}  for $k=0$ and the gradient of the discrete solution 
$u_\mathrm{CR}\in\mathrm{CR}^1_0(\tri)$ to \eqref{e:PMPCRdP} coincide,
\begin{align}\label{e:eqCR}
 p_h = \nabla_\NC u_\mathrm{CR}.
\end{align}
\end{proposition}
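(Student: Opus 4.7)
The plan is to exhibit $q_h := \nabla_\NC u_{\mathrm{CR}}$ together with an auxiliary $\tilde\alpha_h \in Y_h(\tri)$ as a solution of the discrete mixed problem~\eqref{e:PMPdP} and then invoke the uniqueness from Remark~\ref{r:dexistence}. Equivalently, both $p_h$ and $q_h$ will be characterised as the $L^2$-orthogonal projection of $\varphi_{\mathrm{RT}}$ onto a common subspace of $X_h(\tri) = P_0(\tri;\R^2)$.

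The first step is to verify $q_h \in W_h(\tri)$, i.e.\ $(q_h, \Curl \beta_h)_{L^2(\Omega)} = 0$ for every $\beta_h \in Y_h(\tri) = P_1(\tri)\cap Y$. Triangle-wise integration by parts together with $\ddiv\Curl\beta_h = 0$ rewrites the inner product as $\sum_{E\in\edges}(\nabla\beta_h\cdot\tau_E)\vert_E \int_E [u_{\mathrm{CR}}]_E\,ds$, once contributions from adjacent triangles are combined using the continuity of $\beta_h$ and the fact that $\nabla\beta_h\cdot\tau_E$ is constant along each edge. The CR midpoint condition, equivalent to the vanishing edge mean of $[u_{\mathrm{CR}}]_E$ for the $P_1$ trace via exactness of the midpoint rule, annihilates every term, so \eqref{e:PMPdPeq2} is satisfied with $q_h$ in place of $p_h$.

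The second step uses the Raviart--Thomas/Crouzeix--Raviart integration by parts to test against CR functions. Since $\varphi_{\mathrm{RT}}\cdot\nu_E$ is constant on each edge and $v_{\mathrm{CR}}$ is continuous at edge midpoints, the usual Green's formula survives the non-conformity and gives, for every $v_{\mathrm{CR}}\in\mathrm{CR}^1_0(\tri)$,
\begin{align*}
 (\varphi_{\mathrm{RT}},\nabla_\NC v_{\mathrm{CR}})_{L^2(\Omega)}
   &= -(\ddiv\varphi_{\mathrm{RT}},v_{\mathrm{CR}})_{L^2(\Omega)}
    = (f,v_{\mathrm{CR}})_{L^2(\Omega)} \\
   &= (\nabla_\NC u_{\mathrm{CR}},\nabla_\NC v_{\mathrm{CR}})_{L^2(\Omega)}
    = (q_h,\nabla_\NC v_{\mathrm{CR}})_{L^2(\Omega)}.
\end{align*}
Lemma~\ref{l:PMPintegralmean} combined with the classical CR projection property $\Pi_0\nabla H^1_0(\Omega) = \nabla_\NC \mathrm{CR}^1_0(\tri)$ identifies $W_h(\tri) = \nabla_\NC \mathrm{CR}^1_0(\tri)$. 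Hence the chain above expresses that $\Pi_0\varphi_{\mathrm{RT}}-q_h\in X_h(\tri)$ is $L^2$-orthogonal to $W_h(\tri)$, and the discrete Helmholtz decomposition $X_h(\tri) = W_h(\tri)\oplus \Curl Y_h(\tri)$ (Arnold--Falk) yields $\tilde\alpha_h\in Y_h(\tri)$ with $\Pi_0\varphi_{\mathrm{RT}}-q_h = \Curl\tilde\alpha_h$. Testing this identity against any $q_h'\in X_h(\tri)$ recovers \eqref{e:PMPdPeq1} for the pair $(q_h,\tilde\alpha_h)$, and uniqueness forces $p_h = q_h$.

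The main obstacle is the identification $W_h(\tri) = \nabla_\NC \mathrm{CR}^1_0(\tri)$; this is precisely what embeds the classical CR projection property into the new framework and, together with the two integration-by-parts identities (piecewise $P_1$ against $\Curl$ of conforming $P_1$ in Step~1, and $\mathrm{RT}_0$ against CR in Step~2), drives the whole argument. All remaining manipulations are routine linear algebra inside the discrete Helmholtz decomposition.
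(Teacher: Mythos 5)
Your proof is correct and relies on the same two pillars as the paper's argument: the Arnold--Falk discrete Helmholtz decomposition $P_0(\tri;\R^2) = \nabla_\NC\mathrm{CR}^1_0(\tri)\oplus\Curl(P_1(\tri)\cap Y)$ and the Raviart--Thomas/Crouzeix--Raviart integration-by-parts identity. The only real difference is organizational: the paper decomposes the discrete solution $p_h$ and shows its Crouzeix--Raviart preimage solves~\eqref{e:PMPCRdP}, whereas you build a candidate pair $(\nabla_\NC u_\mathrm{CR},\tilde\alpha_h)$ and invoke the uniqueness from Remark~\ref{r:dexistence}; this also makes you re-derive by hand (in your Step~1) the $L^2$-orthogonality $\nabla_\NC\mathrm{CR}^1_0(\tri)\perp\Curl Y_h(\tri)$ that is already contained in the decomposition you later cite, and to detour through Lemma~\ref{l:PMPintegralmean} plus the classical integral-mean property to obtain $W_h(\tri)=\nabla_\NC\mathrm{CR}^1_0(\tri)$, which is again the content of the Arnold--Falk decomposition. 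Both routes are sound; the paper's is shorter because it cites the decomposition once and lets uniqueness of the Crouzeix--Raviart problem close the argument.
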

 
\begin{proof}
The crucial point is the
discrete Helmholtz decomposition \cite{ArnoldFalk1989}
\begin{align}\label{e:PMPdHelmholtz}
 P_0(\tri;\mathbb{R}^2) = \nabla_\NC \mathrm{CR}^1_0(\tri)
    \oplus \Curl (P_1(\tri)\cap Y).
\end{align}
Since $p_h$ is $L^2$ orthogonal to $\Curl (P_1(\tri)\cap Y)$,
this implies $p_h=\nabla_\NC \widetilde{u}_\mathrm{CR}$
for some $\widetilde{u}_\mathrm{CR}\in\mathrm{CR}^1_0(\tri)$. 
Let $q_h = \nabla_\NC v_\mathrm{CR}$ for some 
$v_\mathrm{CR}\in\mathrm{CR}^1_0(\tri)$. 
Then $q_h$ is 
$L^2$ orthogonal to $\Curl (P_1(\tri)\cap Y)$ and a piecewise integration by 
parts and \eqref{e:PMPdP} imply
\begin{align*}
  (\nabla_\NC \widetilde{u}_\mathrm{CR},\nabla_\NC v_\mathrm{CR})_{L^2(\Omega)}
  &= (p_h,q_h)_{L^2(\Omega)}
 = (\varphi_\mathrm{RT},q_h)_{L^2(\Omega)}\\
 &= (-\ddiv \varphi_\mathrm{RT}, v_\mathrm{CR})_{L^2(\Omega)}
 = (f,v_\mathrm{CR})_{L^2(\Omega)}.
\end{align*}
Hence, $\widetilde{u}_\mathrm{CR}=u_\mathrm{CR} $ 
solves \eqref{e:PMPCRdP}.
\end{proof}

The projection property from Lemma~\ref{l:PMPintegralmean} generalizes 
the famous integral mean property 
$\nabla_\NC I_\NC v = \Pi_{P_0(\tri;\R^2)}\nabla v$ for all 
$v\in H^1_0(\Omega)$ of the non-conforming interpolation operator $I_\NC$.

\begin{remark}[higher polynomial degrees]\label{r:PMPnoteqCR}
For higher polynomial degrees $k\geq 1$,
the discretization~\eqref{e:PMPdP} is not equivalent 
to known non-conforming schemes \cite{FortinSoulie1983,CrouzeixFalk1989,
CrouzeixRaviart1973,MatthiesTobiska2005}, in the sense that 
$W_h(\tri) \neq \nabla_\NC V_h(\tri)$ for those non-conforming
finite element spaces $V_h(\tri)$. 
This follows from 
$\nabla_\NC V_h(\tri)\not\subseteq W_h(\tri)$ for non-conforming FEMs 
with enrichment.
A dimension argument shows 
\begin{align*}
 \mathrm{dim}(W_h(\tri))>\mathrm{dim} V_h(\tri)
\end{align*}
for the non-conforming FEMs of~\cite{FortinSoulie1983,CrouzeixFalk1989} 
without enrichment and therefore 
$W_h(\tri)\neq \nabla_\NC V_h(\tri)$. Moreover, this proves that the
generalization of the projection property to higher moments from 
Lemma~\ref{l:PMPintegralmean} cannot hold for those finite element spaces, 
in contrast to the discretization~\eqref{e:PMPdP}.
\end{remark}

\section{Extensions}\label{s:PMPremarks}

Subsection~\ref{ss:PMPrectangles} generalizes the novel FEM to quadrilateral 
meshes and proves a new discrete Helmholtz decomposition for the $Q_1$ rotated 
non-conforming Rannacher-Turek FEM \cite{RannacherTurek1990}. 
Subsection~\ref{ss:PMPraviartthomas} discusses a discretization with 
Raviart-Thomas functions.

\subsection{Quadrilateral finite elements}\label{ss:PMPrectangles}

For this subsection, consider a regular partition $\tri$ of $\Omega$ in 
quadrilaterals.
Define for the reference rectangle
$\widehat{T}=[0,1]^2$
\begin{align*}
 Q_k(\widehat{T})&:=\{v_h\in P_{2k}(\widehat{T})\mid 
    \exists f,g\in P_k([0,1]):\; v_h(x,y) = f(x)g(y)\}.
\end{align*}
Given $T\in \tri$, let $\psi_T:\widehat{T}\to T$ denote the 
bilinear transformation from the reference rectangle to $T$.
For consistency, let $P_{-1}([0,1]):=\{0\}$ and set   
\begin{align*}
  V_{Q,k}(\tri)&:=\left\{\beta_h\in Y \left\vert \;
     \forall T\in\tri:\; (\beta_h\circ \psi_T)\vert_{\widehat{T}}\in 
Q_k(\widehat{T})\right\}\right.,\\
  X_k^{\mathrm{rect}}(\widehat{T})
   &:=\left\{\tau_h\in L^2(\widehat{T};\R^2) \left|
     \begin{array}{l}
       \exists a\in \R, b,c\in P_{k-2}([0,1]), d,e\in 
        Q_{k-1}(\widehat{T})\\
        \text{ such that } \forall (\widehat{x},\widehat{y})\in \widehat{T}\\
        \tau_h(\widehat{x},\widehat{y}) = a\begin{pmatrix}
                         -\widehat{x}^k \widehat{y}^{k-1}\\ 
                          \widehat{x}^{k-1} \widehat{y}^k
                       \end{pmatrix}
          + \begin{pmatrix}
              \widehat{x}^k b(\widehat{y}) + d(\widehat{x},\widehat{y})\\ 
              \widehat{y}^k c(\widehat{x})+e(\widehat{x},\widehat{y})
            \end{pmatrix}
     \end{array}
    \right\}\right.,\\
  X_k^{\mathrm{rect}}(\tri)&:=\left\{\tau_h\in L^2(\Omega;\R^2) \left\vert 
        \begin{array}{l}
           \forall T\in\tri\, \exists \rho_T\in X_k^{\mathrm{rect}}(\widehat{T})
              \text{ such that }\\
           (\tau_h\circ \psi_T)\vert_{\widehat{T}}\\
              \qquad    =  \begin{pmatrix}
                             0 & 1 \\ -1 & 0
                           \end{pmatrix}
                    D(\psi_T^{-1})^\top \circ\psi_T
                        \begin{pmatrix}
                          0 & -1 \\ 1 & 0
                        \end{pmatrix}
                   \rho_T
        \end{array}
    \right\}\right..
\end{align*}
Then a discretization with respect to the quadrilateral partition 
seeks $p_h\in X_k^{\mathrm{rect}}(\tri)$ and 
$\alpha_h\in V_{Q,k}(\tri)$
with
\begin{align*}
 (p_h,q_h)_{L^2(\Omega)} + (q_h,\Curl \alpha_h)_{L^2(\Omega)} &= 
(\varphi,q_h)_{L^2(\Omega)}
 &&\text{ for all }q_h\in X_k^{\mathrm{rect}}(\tri),\\
 (p_h,\Curl \beta_h)_{L^2(\Omega)} &= 0 
 &&\text{ for all }\beta_h\in V_{Q,k}(\tri).
\end{align*}

Let $\beta_h\in V_{Q,k}(\tri)$, i.e., $(\beta_h\circ 
\psi_T)\vert_{\widehat{T}}\in Q_k(\widehat{T})$. A direct calculation 
reveals for all 
$T\in\tri$
\begin{align*}
 ((\Curl \beta_h)^\top\circ\psi_T)\vert_T
  &= 
   (\nabla (\beta_h\circ\psi_T\circ\psi_T^{-1}))^\top\circ\psi_T
     \begin{pmatrix}
       0 & 1\\ -1 & 0
     \end{pmatrix}\\
  &= (\nabla(\beta_h\circ\psi_T))^\top
     D(\psi_T^{-1}) \circ\psi_T
     \begin{pmatrix}
       0 & 1\\ -1 & 0
     \end{pmatrix}.
\end{align*}
Let $(\beta_h\circ \psi_T)(\widehat{x},\widehat{y})
=(\mathfrak{a} \widehat{x}^k+f(\widehat{x}))(\mathfrak{b} \widehat{y}^k+g(\widehat{y}))$ 
with $\mathfrak{a},\mathfrak{b}\in\R$ and $f,g\in P_{k-1}([0,1])$. Then it holds
\begin{align*}
 \nabla(\beta_h\circ\psi_T) = 
    \mathfrak{a}\mathfrak{b}k \begin{pmatrix}
                                \widehat{x}^{k-1}\widehat{y}^k\\
                                \widehat{y}^{k-1} \widehat{x}^k
                              \end{pmatrix}
   + \begin{pmatrix}
       \mathfrak{b} \widehat{y}^k \partial f(\widehat{x})/\partial \widehat{x}\\
       \mathfrak{a} \widehat{x}^k \partial g(\widehat{y})/\partial \widehat{y}
     \end{pmatrix}
  + \begin{pmatrix}
      \mathfrak{a} k \widehat{x}^{k-1} g(\widehat{y}) 
            + g(\widehat{y}) \partial f(\widehat{x})/\partial \widehat{x}\\
      \mathfrak{b} k \widehat{y}^{k-1} f(\widehat{x}) 
            + f(\widehat{x}) \partial g(\widehat{y})/\partial \widehat{y}
    \end{pmatrix}
\end{align*}
and therefore 
\begin{align*}
 &(\nabla(\beta_h\circ\psi_T))^\top 
   \begin{pmatrix}
       0 & -1\\ 1 & 0
   \end{pmatrix}\\
  &\quad\quad = 
    \left(\mathfrak{a}\mathfrak{b}k \begin{pmatrix}
                                \widehat{y}^{k-1} \widehat{x}^k\\
                                -\widehat{x}^{k-1}\widehat{y}^k
                              \end{pmatrix}
   + \begin{pmatrix}
       \mathfrak{a} \widehat{x}^k \partial g(\widehat{y})/\partial \widehat{y}\\
       -\mathfrak{b} \widehat{y}^k \partial f(\widehat{x})/\partial \widehat{x}
     \end{pmatrix}
  + \begin{pmatrix}
      \mathfrak{b} k \widehat{y}^{k-1} f(\widehat{x}) 
                   + f(\widehat{x}) \partial g(\widehat{y})/\partial \widehat{y}\\
      -\mathfrak{a} k \widehat{x}^{k-1} g(\widehat{y}) 
                   - g(\widehat{y}) \partial f(\widehat{x})/\partial \widehat{x}
    \end{pmatrix}
     \right)^\top\\
 & \quad\quad =:(\rho_T(\widehat{x},\widehat{y}))^\top.
\end{align*}
This implies $\rho_T\in X_k^{\mathrm{rect}}(\widehat{T})$ and 
$(\nabla(\beta_h\circ\psi_T))= (0,1;-1,0)\rho_T$. 
The combination of the previous equalities leads to 
\begin{align*}
  ((\Curl \beta_h)\circ\psi_T)\vert_T
   =  \begin{pmatrix}
                             0 & 1 \\ -1 & 0
                           \end{pmatrix}
                    D(\psi_T^{-1})^\top \circ\psi_T
                        \begin{pmatrix}
                          0 & -1 \\ 1 & 0
                        \end{pmatrix}
   \rho_T.
\end{align*}
Consequently, $\Curl \beta_h\in 
X_k^{\mathrm{rect}}(\tri)$.
This and the conformity of the method 
prove as in Section~\ref{s:PMPformulation} the following statements
\begin{itemize}
 \item[(i)] unique existence of solutions,
 \item[(ii)] the best-approximation 
    result
    \begin{align*}
      \|p-p_h&\|_{L^2(\Omega)} + \left\|\Curl 
(\alpha-\alpha_h)\right\|_{L^2(\Omega)}\\
    &\lesssim \Big(\min_{q_h\in X_k^{\mathrm{rect}}(\tri)} 
\|p-q_h\|_{L^2(\Omega)}
      +\min_{\beta_h\in V_{Q,k}(\tri)} 
	\left\|\Curl (\alpha-\beta_h)\right\|_{L^2(\Omega)}\Big),
    \end{align*}
 \item[(iii)] the projection property
      \begin{align*}
	\Pi_{X_k^{\mathrm{rect}}(\tri)} \nabla H^1_0(\Omega)
	\subseteq W_h^{\mathrm{rect}}(\tri)
      \end{align*}
      for 
      \begin{align}\label{e:PMPrectdefMh}
	W_h^{\mathrm{rect}}(\tri) 
	  = \{q_h\in X_k^{\mathrm{rect}}(\tri)\mid 
            \forall \beta_h\in V_{Q,k}(\tri):\;
                   (q_h,\Curl\beta_h)_{L^2(\Omega)}=0\}.
      \end{align}
\end{itemize}

\begin{remark}
The properties (i)--(iii) still hold for any $\widetilde{X}_h(\tri)$ with 
$X_k^\mathrm{rect}(\tri)\subseteq\widetilde{X}_h(\tri)\subseteq X$.
\end{remark}

The remaining part of this subsection proves the equivalence 
of the lowest-order rectangular discretization with 
the non-conforming Rannacher-Turek FEM \cite{RannacherTurek1990}.
To this end, define for the reference rectangle $\widehat{T}$
and the bilinear transformation $\psi_T:\widehat{T}\to T$,
\begin{align}
  Q^{\mathrm{rot}}(\widehat{T})&:=\mathrm{span}\{1,x,y,x^2-y^2\},
  \notag\\
   V^{\mathrm{rot}}_\NC(\tri)&
    :=\left\{v_h\in L^2(\Omega) \left\vert
        \begin{array}{l}
            \forall T\in \tri:\; (v_h\circ\psi_T)\vert_{\widehat{T}}
          \in Q^{\mathrm{rot}}(\widehat{T}) 
          \text{ and }\\\int_E v_h\,ds
          \text{ is continuous}
          \text{ for all interior}\\
          \text{edges }E
          \text{ and vanishes at boundary edges }E
        \end{array}
    \right\}\right..
    \label{e:PMPQ1rotdef}
\end{align}

The following lemma proves a relation between the cardinalities of the 
quadrilaterals, nodes, and interior edges of a quadrilateral partition similar to 
Euler's formulae
\begin{equation}\label{e:PMPeuler}
\begin{aligned}
 \mathrm{card}(\edges) + \mathrm{card}(\edges(\Omega)) &= 3\,\mathrm{card}(\tri),\\
  \mathrm{card}(\edges(\Omega)) + \mathrm{card}(\nodes) 
   &= 2\,\mathrm{card}(\tri) +1
\end{aligned}
\end{equation}
on triangles.
This enables a dimension 
argument in the proof of the discrete Helmholtz decomposition in 
Theorem~\ref{t:PMPrectdHD} below.

\begin{lemma}[Euler formula for quadrilateral partitions]\label{l:PMPeulerrect}
Let $\tri$ be a regular partition of $\Omega$ in quadrilaterals
with edges $\edges$, interior edges $\edges(\Omega)$, and vertices $\nodes$.
Then it holds that $3\mathrm{card}(\tri) + 1 
    = \mathrm{card}(\edges(\Omega)) + \mathrm{card}(\nodes)$.
\end{lemma}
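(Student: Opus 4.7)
The plan is to derive the identity by combining two independent counting relations: Euler's formula for the planar graph induced by $\tri$, and an incidence count between quadrilaterals and their edges.

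First I would view $\tri$ as the cell complex of a connected planar graph whose vertices are $\nodes$, whose edges are $\edges$, and whose bounded faces are the elements of $\tri$. Euler's formula $V-E+F=2$, applied with $V=\mathrm{card}(\nodes)$, $E=\mathrm{card}(\edges)$, and $F=\mathrm{card}(\tri)+1$ (the $+1$ accounts for the unbounded face), yields
\begin{align*}
\mathrm{card}(\edges) = \mathrm{card}(\nodes) + \mathrm{card}(\tri) - 1.
\end{align*}

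Second, I would count incidences between elements and edges. Every $T\in\tri$ has exactly four edges, so summing over all quadrilaterals gives $4\,\mathrm{card}(\tri)$ incidences. Regularity of the partition ensures that each interior edge belongs to exactly two quadrilaterals, while each boundary edge belongs to exactly one, so
\begin{align*}
4\,\mathrm{card}(\tri) = 2\,\mathrm{card}(\edges(\Omega)) + \bigl(\mathrm{card}(\edges)-\mathrm{card}(\edges(\Omega))\bigr)
 = \mathrm{card}(\edges(\Omega)) + \mathrm{card}(\edges).
\end{align*}

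Finally, I would eliminate $\mathrm{card}(\edges)$ between the two displayed formulas. Subtracting the first from the second gives $4\,\mathrm{card}(\tri) - \mathrm{card}(\nodes) - \mathrm{card}(\tri)+1 = \mathrm{card}(\edges(\Omega)) + \mathrm{card}(\edges) - \mathrm{card}(\edges)$, which rearranges to the claimed equality. The only step that requires care is the incidence count, where one must invoke the regularity assumption on the partition (no hanging nodes) to guarantee that each interior edge is shared by exactly two quadrilaterals; beyond that, the argument is a routine application of Euler's formula.
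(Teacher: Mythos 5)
Your proof is correct, but it takes a genuinely different route from the paper's. The paper reduces to the triangular case: it bisects each quadrilateral along a diagonal to obtain a triangulation $\tri_\Delta$, records how $\mathrm{card}(\tri)$, $\mathrm{card}(\edges)$, $\mathrm{card}(\edges(\Omega))$, and $\mathrm{card}(\nodes)$ change under this subdivision, and then invokes the already-stated Euler formulae~\eqref{e:PMPeuler} for triangulations. You instead apply the planar-graph Euler relation $V-E+F=2$ directly to the quadrilateral mesh (using $F=\mathrm{card}(\tri)+1$ because $\Omega$ is simply connected, a standing assumption of the paper) and couple it with the element--edge incidence count $4\,\mathrm{card}(\tri)=\mathrm{card}(\edges)+\mathrm{card}(\edges(\Omega))$; eliminating $\mathrm{card}(\edges)$ gives the identity. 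Your argument is self-contained and a bit more elementary, whereas the paper's is shorter because it reuses~\eqref{e:PMPeuler}; your version also makes the reliance on simple connectedness and regularity (no hanging nodes, so each interior edge is shared by exactly two quadrilaterals) explicit, which is a small clarity gain.
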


\begin{proof}
Define a triangulation $\tri_\Delta$ of $\Omega$ in triangles 
by the division of each quadrilateral into two triangles by a diagonal cut.
Let $\edges_\Delta$ denote the edges of $\tri_\Delta$, 
$\edges_\Delta(\Omega)$ the interior edges and $\nodes_\Delta$ 
the vertices.
Then the following relations between the two partitions hold
\begin{align*}
 \mathrm{card}(\tri_\Delta) &= 2\mathrm{card}(\tri),
 \qquad 
 &\mathrm{card}(\edges_\Delta) &= \mathrm{card}(\edges) + \mathrm{card}(\tri),\\
 \mathrm{card}(\edges_\Delta(\Omega)) &= \mathrm{card}(\edges(\Omega)) + 
\mathrm{card}(\tri),
 \qquad 
 &\mathrm{card}(\nodes_\Delta)&=\mathrm{card}(\nodes).
\end{align*}
This and Euler's formulae for triangles \eqref{e:PMPeuler}
prove  
\begin{align*}
 \mathrm{card}(\edges(\Omega))+\mathrm{card}(\nodes) 
 &= \mathrm{card}(\edges_\Delta(\Omega)) -\mathrm{card}(\tri) + 
\mathrm{card}(\nodes_\Delta)\\
 &= 2\mathrm{card}(\tri_\Delta) + 1 - \mathrm{card}(\tri) 
 = 3\mathrm{card}(\tri) + 1.
 \qedhere
\end{align*}
\end{proof}

The following theorem proves that the solution space $W_h^\mathrm{rect}(\tri)$ 
from~\eqref{e:PMPrectdefMh} equals the piecewise gradients of functions in 
$V^{\mathrm{rot}}_\NC(\tri)$ on a partition in squares for $k=1$.

\begin{theorem}[discrete Helmholtz decomposition on squares] 
\label{t:PMPrectdHD}
Let $\tri$ be a regular partition of $\Omega$ in squares.
% with edges $\edges$ parallel to the $x$ and $y$ axis.
Then,
\begin{align}\label{e:dHelmholtzdecRect}
  X_1^{\mathrm{rect}}(\tri) = \nabla_\NC V^{\mathrm{rot}}_\NC(\tri)
      \oplus \Curl V_{Q,1}(\tri)
\end{align}
and the decomposition is $L^2$ orthogonal.
\end{theorem}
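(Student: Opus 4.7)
The plan is to follow the pattern used in the triangular case (Proposition~\ref{p:PMPequivCR}) and establish \eqref{e:dHelmholtzdecRect} in three steps: (i)~prove $L^2$-orthogonality of the two summands, (ii)~show that both are contained in $X_1^{\mathrm{rect}}(\tri)$, and (iii)~close the argument by a dimension count based on Lemma~\ref{l:PMPeulerrect}.

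For (i), given $v_h\in V^{\mathrm{rot}}_\NC(\tri)$ and $\beta_h\in V_{Q,1}(\tri)$, I would integrate by parts square by square. Since $\ddiv\Curl\beta_h=0$ pointwise, only the edge contributions $\int_{\partial T}v_h\,(\nabla\beta_h\cdot\tau_T)\,ds$ survive. The crucial observation is that $\beta_h$ is globally $H^1$ and piecewise $Q_1$, so its trace on any square edge is affine and the tangential derivative $\nabla\beta_h\cdot\tau_E$ is a single-valued \emph{constant} $c_E$ on each edge $E$. Regrouping the sum edge by edge, each interior edge contributes $c_E\int_E[v_h]_E\,ds=0$ and each boundary edge contributes $c_E\int_E v_h\,ds=0$; both vanish by the defining moment conditions of $V^{\mathrm{rot}}_\NC(\tri)$.

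Step (ii) splits into two inclusions. The inclusion $\Curl V_{Q,1}(\tri)\subseteq X_1^{\mathrm{rect}}(\tri)$ is already established by the Piola-type computation carried out just before the theorem statement. For $\nabla_\NC V^{\mathrm{rot}}_\NC(\tri)\subseteq X_1^{\mathrm{rect}}(\tri)$, I would work on the reference square: writing $v_h\circ\psi_T=a+b\widehat x+c\widehat y+d(\widehat x^2-\widehat y^2)$ yields
\begin{equation*}
 \nabla(v_h\circ\psi_T)=-2d\begin{pmatrix}-\widehat x\\ \widehat y\end{pmatrix}+\begin{pmatrix}b\\ c\end{pmatrix},
\end{equation*}
which lies in $X_1^{\mathrm{rect}}(\widehat T)$: for $k=1$ the spaces $P_{-1}([0,1])=\{0\}$ and $Q_0(\widehat T)=\R$ reduce the template to the three-dimensional span of $(-\widehat x,\widehat y)^\top$ and the two constant unit vectors. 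Because $\psi_T$ for a square is an isotropic scaling composed with a rotation, the Piola-like transformation defining $X_1^{\mathrm{rect}}(\tri)$ collapses (using the commutation of $2$D rotations with $J=(0,1;-1,0)$) to the ordinary chain rule up to a global scalar, so the elementwise membership lifts globally.

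For (iii), $\dim X_1^{\mathrm{rect}}(\widehat T)=3$ gives $\dim X_1^{\mathrm{rect}}(\tri)=3\,\mathrm{card}(\tri)$. Edge-moment degrees of freedom yield $\dim V^{\mathrm{rot}}_\NC(\tri)=\mathrm{card}(\edges(\Omega))$ with trivial $\nabla_\NC$-kernel (moment continuity forces a piecewise constant to be globally constant, and the boundary moments then force it to vanish). Vertex degrees of freedom minus the mean-zero constraint yield $\dim V_{Q,1}(\tri)=\mathrm{card}(\nodes)-1$, and $\Curl$ is injective on this mean-free space. Lemma~\ref{l:PMPeulerrect} then closes the gap via $\mathrm{card}(\edges(\Omega))+\mathrm{card}(\nodes)-1=3\,\mathrm{card}(\tri)$, so the orthogonal direct sum on the left of \eqref{e:dHelmholtzdecRect} already exhausts $X_1^{\mathrm{rect}}(\tri)$. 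I expect the main obstacle to be the orthogonality step: one must bring the boundary integrand into a form on which the moment continuity of $v_h$ can be exploited, and it is precisely the square geometry, which makes the tangential derivative of any $Q_1$ function constant along each edge, that keeps this argument elementary.
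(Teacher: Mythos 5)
Your proof follows the paper's argument almost step for step: same integration by parts showing the tangential derivative of a $Q_1$ function is constant along each edge of a parallelogram (so orthogonality reduces to the vanishing edge moments of $V^{\mathrm{rot}}_\NC$), same observation that for squares $D\psi_T$ commutes with the $90^\circ$-rotation $J=(0,1;-1,0)$ so the Piola-type map collapses and $\nabla_\NC V^{\mathrm{rot}}_\NC(\tri)\subseteq X_1^{\mathrm{rect}}(\tri)$, and the same dimension count via Lemma~\ref{l:PMPeulerrect}. The only difference is that you spell out the injectivity of $\nabla_\NC$ on $V^{\mathrm{rot}}_\NC(\tri)$ and of $\Curl$ on $V_{Q,1}(\tri)$, which the paper leaves implicit; otherwise the two proofs coincide.
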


\begin{remark}
The $L^2$-orthogonality in \eqref{e:dHelmholtzdecRect}
still holds for a partition in parallelograms. However, 
$\nabla_\NC V^{\mathrm{rot}}_\NC(\tri)\not\subseteq X_1^{\mathrm{rect}}(\tri)$
for general quadrilateral partitions.
\end{remark}

\begin{proof}[Proof of Theorem~\ref{t:PMPrectdHD}]
Let $v_h\in V^{\mathrm{rot}}_\NC(\tri)$ and 
$\beta_h\in V_{Q,1}(\tri)$. A piecewise integration by parts leads to 
\begin{align*}
 (\nabla_\NC v_h,\Curl\beta_h)_{L^2(\Omega)}
  = \sum_{E\in\edges} \int_E [v_h]_E \nabla\beta_h\cdot\tau_E\,ds.
\end{align*}
Since $\tri$ consists of parallelograms, the bilinear transformation 
$\psi_T:\widehat{T}\to T$ is affine and, hence,
$\beta_h\vert_E$ is affine on each edge $E\in\edges$.
This implies that $\nabla\beta_h\cdot\tau_E$ is constant.
Since the integral mean of $[v_h]_E$ vanishes,
this proves the $L^2$ orthogonality.

Let $v_h\in V^{\mathrm{rot}}_\NC(\tri)$. A computation reveals for all 
$T\in\tri$ that there exist $f_T\in \R$ and $g_T\in \R^2$ 
such that
\begin{align*}
 \nabla v_h (x,y) 
   =  D(\psi_T^{-1})^\top
   \bigg( f_T \begin{pmatrix}
                 -x \\ y
               \end{pmatrix}
        + g_T\bigg).
\end{align*}
For $k=1$, $X_1^{\mathrm{rect}}(\tri)$ reads
\begin{align*}
  X_1^{\mathrm{rect}}(\tri)
  = \left\{\tau_h\in L^2(\Omega;\R^2) \left\vert 
        \begin{array}{l}
           \forall T\in\tri\, \exists a_T\in\R, d_T\in\R^2
              \text{ such that }\\
           (\tau_h\circ \psi_T)\vert_{\widehat{T}}
                  =
                          \begin{pmatrix}
                             0 & 1 \\ -1 & 0
                           \end{pmatrix}
                   ( D(\psi_T^{-1})^\top 
                  \\ 
     \qquad\qquad\qquad
              \circ\psi_T) 
                        \begin{pmatrix}
                          0 & -1 \\ 1 & 0
                        \end{pmatrix}
              \left(a_T \begin{pmatrix}
                          -x \\ y
                        \end{pmatrix}
                        + d_T\right)
        \end{array}
    \right\}\right..
\end{align*}
Since all $T\in\tri$ are squares, $D\psi_T$ and 
$( 0, 1; -1, 0)$ commute, 
and, hence, $\nabla v_h\in X_1^{\mathrm{rect}}(\tri)$.
Thus, $\nabla_\NC V^{\mathrm{rot}}_\NC(\tri)
\oplus \Curl V_{Q,1}(\tri) \subseteq X_1^{\mathrm{rect}}(\tri)$.
The dimension of $\nabla_\NC V^{\mathrm{rot}}_\NC(\tri)$ equals 
$\mathrm{card}(\edges(\Omega))$ and the dimension of $\Curl V_{Q,1}(\tri)$
equals $\mathrm{card}(\nodes)-1$, while the dimension of $X_1^{\mathrm{rect}}(\tri)$
equals $3\mathrm{card}(\tri)$. This and Lemma~\ref{l:PMPeulerrect}
prove the assertion.
\end{proof}

\begin{remark}[arbitrary quadrilaterals]
The best-approximation (ii) from above proves quasi-optimal 
convergence even for arbitrary quadrilaterals.
Standard interpolation error estimates for  $V_{Q,1}(\tri)$ 
and for $P_0(\tri;\R^2)\subseteq X^\mathrm{rect}_1(\tri)$ \cite{Ciarlet1978}
lead to first-order convergence rates of $h$ for sufficiently smooth solutions.
This should be contrasted with 
\cite{RannacherTurek1990}, where quasi-optimal convergence is only obtained for 
a modification of~\eqref{e:PMPQ1rotdef} where $V^{\mathrm{rot}}_\NC(\tri)$ 
is defined in terms of local coordinates.
\end{remark}

\subsection{Relation to mixed Raviart-Thomas FEM}\label{ss:PMPraviartthomas}

This subsection shows that the classical mixed Raviart-Thomas FEM \cite{RaviartThomas1977}
can be regarded as a particular choice of the ansatz spaces in the new mixed scheme.

Let $\tri$ denote a regular triangulation of $\Omega$ in triangles.
Define the space of Raviart-Thomas functions \cite{RaviartThomas1977}
\begin{align*}
 X_\mathrm{RT}(\tri) = \{q_\mathrm{RT}\in H(\ddiv,\Omega)\mid \forall T\in\tri: \; 
     q_\mathrm{RT}\vert_T(x) \in P_{k}(T;\R^2) + P_{k}(T)\, x\}
\end{align*}
and 
\begin{align*}
 Y_\mathrm{RT}(\tri):= P_{k+1}(\tri)\cap Y.
\end{align*}
Then the following problem is a discretization of \eqref{e:PMPmixedproblem}: 
Seek $(p_\mathrm{RT},\alpha_\mathrm{RT})
\in X_\mathrm{RT}(\tri)\times Y_\mathrm{RT}(\tri)$ with 
\begin{equation}\label{e:PMPmixedRT}
\begin{aligned}
 (p_\mathrm{RT},q_\mathrm{RT})_{L^2(\Omega)} 
    + (q_\mathrm{RT},\Curl\alpha_\mathrm{RT})_{L^2(\Omega)}
  &= (\varphi,q_\mathrm{RT})
  &\;\;\text{for all }q_\mathrm{RT}\in X_\mathrm{RT}(\tri),\\
 (p_\mathrm{RT},\Curl\beta_\mathrm{RT})_{L^2(\Omega)} &= 0
  &\;\;\text{for all }\beta_\mathrm{RT}\in Y_\mathrm{RT}(\tri).
\end{aligned}
\end{equation}
Since $\Curl Y_\mathrm{RT}(\tri)\subseteq P_{k}(\tri;\R^2)$ and 
$\ddiv \Curl v_\mathrm{RT}=0$ for all 
$v_\mathrm{RT}\in Y_\mathrm{RT}(\tri)$, it follows 
$\Curl Y_\mathrm{RT}(\tri)\subseteq X_\mathrm{RT}(\tri)$. This and the 
conformity of the method 
guarantee as in Section~\ref{s:PMPformulation} and in 
Subsection~\ref{ss:PMPrectangles} the unique existence of solutions, a 
best-approximation result, and the projection property 
\begin{align*}
 \Pi_{X_\mathrm{RT}(\tri)} \nabla H^1_0(\Omega) 
   &\subseteq W_\mathrm{RT}(\tri) \\
   &:=\{q_\mathrm{RT}\in X_\mathrm{RT}(\tri)\mid 
      \forall \beta_\mathrm{RT}\in Y_\mathrm{RT}(\tri):\; 
   (q_\mathrm{RT},\Curl\beta_\mathrm{RT})_{L^2(\Omega)}=0\}.
\end{align*}
The discrete Helmholtz decomposition of 
\cite{HuangXu2012,ArnoldFalkWinther1997,BrezziFortinStenberg1991} proves 
\begin{align*}
 X_\mathrm{RT}(\tri) = \nabla_{\!\mathrm{RT}} P_{k}(\tri) \oplus \Curl Y_\mathrm{RT}(\tri)
\end{align*}
with the operator $\nabla_{\!\mathrm{RT}}:P_{k}(\tri)\to X_\mathrm{RT}(\tri)$ 
defined for all $v_\mathrm{RT}\in P_{k}(\tri)$ by 
\begin{align*}
 (\nabla_{\!\mathrm{RT}} v_\mathrm{RT},q_\mathrm{RT})_{L^2(\Omega)} 
   = -(v_\mathrm{RT},\ddiv q_\mathrm{RT})_{L^2(\Omega)}
 \qquad\text{for all }q_\mathrm{RT}\in X_\mathrm{RT}(\tri).
\end{align*}
This decomposition yields the equivalence of \eqref{e:PMPmixedRT} with the 
problem: Seek $(p_\mathrm{RT},\widetilde{u}_\mathrm{RT})
\in X_\mathrm{RT}(\tri)\times P_{k}(\tri)$ 
with
\begin{align*}
 p_\mathrm{RT} &= \nabla_{\!\mathrm{RT}} \widetilde{u}_\mathrm{RT},\\
 (w_\mathrm{RT},\ddiv p_\mathrm{RT})_{L^2(\Omega)} & = 
(\ddiv\Pi_{X_{\mathrm{RT}}(\tri)}\varphi,w_\mathrm{RT})_{L^2(\Omega)}
 \qquad\text{for all }w_\mathrm{RT} \in P_{k}(\tri).
\end{align*}
This is the classical Raviart-Thomas discretization with $f$ replaced by
$\ddiv\Pi_{X_{\mathrm{RT}}(\tri)}\varphi$.

Assume now that the right-hand side $\varphi\in X_\mathrm{RT}(\tri)$ is 
a Raviart-Thomas function. Since by definition $Y_\mathrm{RT}(\tri)=Y_h(\tri)$ 
with $Y_h(\tri)$ from Subsection~\ref{ss:PMPdiscretisation} and since 
$\alpha_\mathrm{RT}$ is the solution of 
\begin{align*}
  (\Curl \beta_\mathrm{RT},\Curl \alpha_\mathrm{RT})_{L^2(\Omega)}
    = (\varphi,\Curl \beta_\mathrm{RT})_{L^2(\Omega)}
    \qquad \text{for all }\beta_\mathrm{RT}\in Y_\mathrm{RT}(\tri),
\end{align*}
it holds 
$\alpha_\mathrm{RT}=\alpha_h$ with $\alpha_h$ from~\eqref{e:PMPdP}.
Since $\varphi = p_\mathrm{RT}+\Curl\alpha_\mathrm{RT}$ and 
$\Pi_{X_h(\tri)}\varphi = p_h+\Curl\alpha_h$, it follows 
\begin{align*}
 p_h = \Pi_{X_h(\tri)} p_\mathrm{RT}.
\end{align*}
For $k=0$, the equivalence with the Crouzeix-Raviart FEM~\eqref{e:eqCR}
then proves the identity 
\begin{align*}
  \nabla_\NC u_\mathrm{CR}=\Pi_{X_h(\tri)} p_\mathrm{RT},
\end{align*}
which is also known as Marini identity~\cite{ArnoldBrezzi1985,Marini1985}.

\section{Medius analysis}\label{s:PMPmedius}

The medius analysis of \cite{Gudi2010,CarstensenPeterseimSchedensack2012}
proves for the discrete solution 
$u_\mathrm{CR}\in\mathrm{CR}^1_0(\tri)$ to \eqref{e:PMPCRdP} 
the best-approximation result 
\begin{align}\label{e:PMPmediusclassic}
 \|\nabla_\NC(u-u_\mathrm{CR})\|_{L^2(\Omega)}
  \lesssim \min_{v_\mathrm{CR}\in\mathrm{CR}^1_0(\tri)}
    \|\nabla_\NC(u-v_\mathrm{CR})\|_{L^2(\Omega)}
   + \mathrm{osc}(f,\tri).
\end{align}
The following theorem proves a generalization  
for the discretization \eqref{e:PMPdP} for the lowest order case $k=0$.

\begin{theorem}[best-approximation property]\label{t:PMPmedius}
Let $(p,\alpha)\in X\times Y$ be the solution to \eqref{e:PMPmixedproblem} and 
$(p_h,\alpha_h)\in P_0(\tri;\R^2)\times (P_1(\tri)\cap Y) $
be the solution to \eqref{e:PMPdP}.
Then the following best-approximation result holds
\begin{equation}\label{e:PMPmedius}
\begin{aligned}
 \| p-p_h\|_{L^2(\Omega)} 
  & \lesssim \|p-\Pi_0 p\|_{L^2(\Omega)} + \mathrm{osc}(f,\tri)\\
 &\qquad
  + \sup_{v_\mathrm{CR}\in\mathrm{CR}^1_0(\tri)\setminus\{0\}}
        \frac{(f,v_\mathrm{CR})_{L^2(\Omega)} 
    - (\varphi,\nabla_\NC v_\mathrm{CR})_{L^2(\Omega)}} 
                 {\|\nabla_\NC v_\mathrm{CR}\|_{L^2(\Omega)} }.
\end{aligned}
\end{equation}
\end{theorem}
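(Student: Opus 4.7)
The plan is to reduce the theorem to the classical medius estimate \eqref{e:PMPmediusclassic} by identifying $p_h$ as a piecewise gradient of a Crouzeix--Raviart function, in the spirit of Proposition~\ref{p:PMPequivCR}, but for a \emph{general} $\varphi\in H(\ddiv,\Omega)$ rather than only for the Raviart--Thomas choice $\varphi_\mathrm{RT}$. The discrete Helmholtz decomposition \eqref{e:PMPdHelmholtz}, applied to $p_h$ (which lies in $W_h(\tri)$ and is therefore $L^2$-orthogonal to $\Curl(P_1(\tri)\cap Y)$), yields a unique $\widetilde u_\mathrm{CR}\in\mathrm{CR}^1_0(\tri)$ with $p_h=\nabla_\NC\widetilde u_\mathrm{CR}$. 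Testing \eqref{e:PMPdPeq1} with $q_h=\nabla_\NC v_\mathrm{CR}\in P_0(\tri;\mathbb{R}^2)$ and using the same orthogonality to annihilate the $\Curl\alpha_h$ contribution show that $\widetilde u_\mathrm{CR}$ solves the ``modified Crouzeix--Raviart'' problem
\begin{equation*}
(\nabla_\NC\widetilde u_\mathrm{CR},\nabla_\NC v_\mathrm{CR})_{L^2(\Omega)}
= (\varphi,\nabla_\NC v_\mathrm{CR})_{L^2(\Omega)}
\qquad\text{for all } v_\mathrm{CR}\in\mathrm{CR}^1_0(\tri),
\end{equation*}
which agrees with \eqref{e:PMPCRdP} precisely when $\varphi$ is Raviart--Thomas.

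If $u_\mathrm{CR}$ denotes the genuine CR solution to \eqref{e:PMPCRdP}, subtracting the two variational problems and testing with $v_\mathrm{CR}:=\widetilde u_\mathrm{CR}-u_\mathrm{CR}$ gives, after one division by $\|\nabla_\NC(\widetilde u_\mathrm{CR}-u_\mathrm{CR})\|_{L^2(\Omega)}$,
\begin{equation*}
\|\nabla_\NC(\widetilde u_\mathrm{CR}-u_\mathrm{CR})\|_{L^2(\Omega)}
\leq \sup_{v_\mathrm{CR}\in\mathrm{CR}^1_0(\tri)\setminus\{0\}}
\frac{(f,v_\mathrm{CR})_{L^2(\Omega)}-(\varphi,\nabla_\NC v_\mathrm{CR})_{L^2(\Omega)}}{\|\nabla_\NC v_\mathrm{CR}\|_{L^2(\Omega)}},
\end{equation*}
where the sign inside the supremum is irrelevant since $v_\mathrm{CR}$ may be replaced by $-v_\mathrm{CR}$. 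This accounts for exactly the last summand in \eqref{e:PMPmedius}.

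The triangle inequality
\begin{equation*}
\|p-p_h\|_{L^2(\Omega)}\leq \|\nabla_\NC(u-u_\mathrm{CR})\|_{L^2(\Omega)} + \|\nabla_\NC(u_\mathrm{CR}-\widetilde u_\mathrm{CR})\|_{L^2(\Omega)}
\end{equation*}
then reduces the task to controlling the genuine CR error, for which the classical medius estimate \eqref{e:PMPmediusclassic} supplies a bound by the CR best-approximation plus $\mathrm{osc}(f,\tri)$. Inserting $v_\mathrm{CR}=I_\NC u$ in the best-approximation and invoking the integral-mean property $\nabla_\NC I_\NC u=\Pi_0\nabla u$ converts this term into $\|p-\Pi_0 p\|_{L^2(\Omega)}$, which completes \eqref{e:PMPmedius}. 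The conceptual obstacle is concentrated in the first step: recognizing that the lowest-order discretization~\eqref{e:PMPdP} always produces a CR gradient, but with a right-hand side that sees the flux $\varphi$ rather than $f$, so that the inconsistency between the two choices is precisely the supremum term in the statement; once this is accepted, the remainder is a triangle inequality and a citation of known CR results.
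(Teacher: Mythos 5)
Your proof is correct, and it takes a genuinely different route than the paper's. You reduce the statement to the classical Crouzeix--Raviart medius estimate \eqref{e:PMPmediusclassic} by first identifying $p_h=\nabla_\NC\widetilde u_\mathrm{CR}$ via the discrete Helmholtz decomposition \eqref{e:PMPdHelmholtz}, then recognizing that $\widetilde u_\mathrm{CR}$ solves a \emph{modified} CR problem with right-hand side $(\varphi,\nabla_\NC\cdot)$ rather than $(f,\cdot)$, so that a Galerkin subtraction gives exactly the supremum term, and finally inserting the non-conforming interpolant $I_\NC u$ into the best-approximation and using $\nabla_\NC I_\NC u=\Pi_0\nabla u$ to obtain $\|p-\Pi_0 p\|_{L^2(\Omega)}$. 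The paper instead never invokes \eqref{e:PMPmediusclassic}: it sets $q_h:=\Pi_0 p-p_h$, uses the Pythagorean identity $\|p-p_h\|^2_{L^2(\Omega)}=\|p-\Pi_0 p\|^2_{L^2(\Omega)}+(p-p_h,q_h)_{L^2(\Omega)}$, represents $q_h=\nabla_\NC v_\mathrm{CR}$, and then estimates $(p-p_h,q_h)_{L^2(\Omega)}$ directly with the companion operator of Lemma~\ref{l:PMPcompanion} applied to $v_\mathrm{CR}$ -- essentially re-running the medius argument inline for the mixed formulation. Your version is more modular and makes transparent where the extra consistency term comes from (the mismatch between $f$ and $\varphi$), at the cost of treating \eqref{e:PMPmediusclassic} as a black box; the paper's version works directly with the $(p,\alpha)$ variables, is self-contained modulo the companion lemma, and keeps the style uniform with the rest of the paper. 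Both are correct and of comparable length, and both ultimately rest on the same two ingredients (the discrete Helmholtz decomposition and some form of the companion/enrichment technique, the latter entering through the cited classical estimate in your case).
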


\begin{remark}
If $\varphi$ is a lowest-order Raviart-Thomas function, then it allows for 
an integration by parts formula also with Crouzeix-Raviart functions 
(see Subsection~\ref{ss:PMPCR}). Therefore, the third term on the right-hand 
side of \eqref{e:PMPmedius} vanishes. This and the equivalence with the 
non-conforming FEM of Crouzeix and Raviart from Subsection~\ref{ss:PMPCR} 
reveal the best-approximation result \eqref{e:PMPmediusclassic}.
\end{remark}

The remaining part of this section is devoted to the proof of 
Theorem~\ref{t:PMPmedius}.
The following lemma from \cite{CarstensenSchedensack2014,
CarstensenGallistlSchedensack2014} is the key ingredient of this proof.
Recall the definition of $\mathrm{CR}^1_0(\tri)$ from Subsection~\ref{ss:PMPCR}.

\begin{lemma}[companion]\label{l:PMPcompanion}
For any $v_\mathrm{CR}\in\mathrm{CR}^1_0(\tri)$ there exists 
$v\in H^1_0(\Omega)$ with the following properties
\begin{align}
 &\text{(i) } &&\Pi_0 \nabla_\NC (v-v_\mathrm{CR})=0,\notag\\
 &\text{(ii) }&&\Pi_0 (v-v_\mathrm{CR}) = 0,\notag\\
 &\text{(iii) }&&\|h_\tri^{-1}(v_\mathrm{CR}-v)\|_{L^2(\Omega)}
        + \|\nabla_\NC(v_\mathrm{CR}-v)\|_{L^2(\Omega)}
             \lesssim \|\nabla_\NC v_\mathrm{CR}\|_{L^2(\Omega)}.
 \tag*{\qed}
\end{align}
\end{lemma}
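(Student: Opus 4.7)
The plan is to construct $v$ as a three-step correction of $v_\mathrm{CR}$: first a nodal averaging into the conforming $P_1$ space, then edge-bubble and triangle-bubble corrections that enforce the two moment conditions (i) and (ii).

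First, I pass to a conforming function by nodal averaging. Let $S:=P_1(\tri)\cap H^1_0(\Omega)$ and define $J_1 v_\mathrm{CR}\in S$ by
\begin{align*}
 (J_1 v_\mathrm{CR})(z):=\frac{1}{\mathrm{card}\{T\in\tri\mid z\in T\}}\sum_{T\in\tri,\,z\in T}(v_\mathrm{CR}|_T)(z)
\end{align*}
at each interior vertex $z\in\nodes$, with $J_1 v_\mathrm{CR}(z):=0$ on $\partial\Omega$. Since $\int_E [v_\mathrm{CR}]_E\,ds=0$ by Crouzeix-Raviart continuity, a standard edge-jump trace argument yields
\begin{align*}
 \|h_\tri^{-1}(v_\mathrm{CR}-J_1 v_\mathrm{CR})\|_{L^2(\Omega)} + \|\nabla_\NC(v_\mathrm{CR}-J_1 v_\mathrm{CR})\|_{L^2(\Omega)} \lesssim \|\nabla_\NC v_\mathrm{CR}\|_{L^2(\Omega)}.
\end{align*}

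Next, I correct the edge integrals. For each $E\in\edges$ let $b_E\in H^1_0(\Omega)$ be the quadratic edge-bubble supported on the patch $\omega_E:=\bigcup\{T\in\tri\mid E\subseteq T\}$, which vanishes on every other edge and is normalized by $\int_E b_E\,ds=1$. Set
\begin{align*}
 v_2:=J_1 v_\mathrm{CR}+\sum_{E\in\edges} c_E\,b_E, \qquad c_E:=\int_E(v_\mathrm{CR}-J_1 v_\mathrm{CR})\,ds.
\end{align*}
Then $\int_E v_2\,ds=\int_E v_\mathrm{CR}\,ds$ for all $E\in\edges$, and the elementwise divergence theorem $\int_T\nabla w\,dx=\int_{\partial T}w\,\nu\,ds$ yields $\int_T\nabla v_2\,dx=\int_T\nabla v_\mathrm{CR}\,dx$ for every $T\in\tri$, which is (i). For each $T\in\tri$ let $b_T$ denote the cubic barycentric volume bubble (vanishing on $\partial T$) and set
\begin{align*}
 v:=v_2+\sum_{T\in\tri} d_T\,b_T, \qquad d_T:=\Big(\int_T b_T\,dx\Big)^{-1}\int_T(v_\mathrm{CR}-v_2)\,dx.
\end{align*}
Since $b_T|_{\partial T}=0$, the edge-integral matching of Step~2 persists, while $\int_T v\,dx=\int_T v_\mathrm{CR}\,dx$ holds by construction, which is (ii).

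For (iii), a trace inequality yields $|c_E|\lesssim h_E^{-1/2}\|v_\mathrm{CR}-J_1 v_\mathrm{CR}\|_{L^2(\omega_E)}$ and $|d_T|\lesssim h_T^{-2}\|v_\mathrm{CR}-v_2\|_{L^2(T)}$, while standard bubble scaling gives $\|b_E\|_{L^2(\omega_E)}\approx h_E^{1/2}$, $\|\nabla b_E\|_{L^2(\omega_E)}\approx 1$, $\|b_T\|_{L^2(T)}\approx h_T$, and $\|\nabla b_T\|_{L^2(T)}\approx 1$. Summing over $E$ and $T$, controlling $\|v_\mathrm{CR}-v_2\|_{L^2(T)}$ by Step~1 together with the Step-2 bubble contributions, and invoking the triangle inequality yields (iii). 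The principal obstacle is the bookkeeping in this last step: one must verify that the bubble corrections inherit the $\|\nabla_\NC v_\mathrm{CR}\|_{L^2(\Omega)}$ bound from Step~1 with the correct $h$-weights, so that neither the weighted-$L^2$ term nor the broken-gradient term is degraded and the volume-bubble correction of Step~3 does not undo the Step-2 matching.
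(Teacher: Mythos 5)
The paper itself contains no proof of this lemma: it is quoted with a \qed from \cite{CarstensenSchedensack2014,CarstensenGallistlSchedensack2014}, and your three-step correction (nodal averaging into $P_1(\tri)\cap H^1_0(\Omega)$, edge bubbles to match edge integrals and hence (i) via the per-element divergence theorem, volume bubbles for (ii)) is exactly the standard companion construction used in those references, so the route is the right one. One small point: for a boundary edge $E\subseteq\partial\Omega$ your normalized bubble $b_E$ does not belong to $H^1_0(\Omega)$; this is harmless only because $c_E=0$ there (indeed $J_1v_\mathrm{CR}$ vanishes on $\partial\Omega$ and $\int_E v_\mathrm{CR}\,ds=0$ since $v_\mathrm{CR}\vert_E$ is affine and vanishes at the midpoint), so the sum must be restricted to interior edges and this should be said explicitly.

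The genuine gap is in the step you yourself flag as the obstacle, namely the scalings in (iii): as stated they are mutually inconsistent and do not close the estimate. With the normalization $\int_E b_E\,ds=1$ one has $\|b_E\|_{L^2(\omega_E)}\approx 1$ and $\|\nabla b_E\|_{L^2(\omega_E)}\approx h_E^{-1}$, not $h_E^{1/2}$ and $1$ (no normalization of $b_E$ produces your pair of norms simultaneously); and the correct coefficient bounds are $|c_E|\lesssim\|v_\mathrm{CR}-J_1v_\mathrm{CR}\|_{L^2(\omega_E)}$, by the discrete trace inequality for the piecewise affine difference, and $|d_T|\lesssim h_T^{-1}\|v_\mathrm{CR}-v_2\|_{L^2(T)}$, by Cauchy--Schwarz together with $\int_T b_T\approx h_T^2$ -- not $h_E^{-1/2}\|\cdot\|$ and $h_T^{-2}\|\cdot\|$. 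This is not cosmetic: taking your $|d_T|\lesssim h_T^{-2}\|v_\mathrm{CR}-v_2\|_{L^2(T)}$ and $\|\nabla b_T\|_{L^2(T)}\approx 1$ literally, the gradient contribution of the volume bubbles is $\approx h_T^{-2}\|v_\mathrm{CR}-v_2\|_{L^2(T)}$, while Steps 1--2 only give $\|v_\mathrm{CR}-v_2\|_{L^2(T)}\lesssim h_T\|\nabla_\NC v_\mathrm{CR}\|$ (first order, not second), so the claimed chain is off by a factor $h_T^{-1}$ and (iii) does not follow as written. With the corrected constants everything does close: the edge term contributes $h_E^{-1}|c_E|\lesssim \|h_\tri^{-1}(v_\mathrm{CR}-J_1v_\mathrm{CR})\|_{L^2(\omega_E)}$ and the volume term $h_T^{-1}\|v_\mathrm{CR}-v_2\|_{L^2(T)}$, both controlled by $\|\nabla_\NC v_\mathrm{CR}\|_{L^2}$ through the Step 1 estimate and finite overlap of the patches; since you explicitly left this bookkeeping unverified, it needs to be carried out with these scalings to complete the proof.
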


\begin{proof}[Proof of Theorem~\ref{t:PMPmedius}]
Define $q_h:=\Pi_0 p - p_h\in P_{0}(\tri;\R^2)$.
The projection property of Lemma~\ref{l:PMPintegralmean} implies that 
$q_h\in W_h(\tri)$ and the discrete Helmholtz decomposition \eqref{e:PMPdHelmholtz}
guarantees the existence of $v_\mathrm{CR}\in\mathrm{CR}^1_0(\tri)$ with 
$q_h=\nabla_\NC v_\mathrm{CR}$.
Let $v\in H^1_0(\Omega)$ denote the companion of $v_\mathrm{CR}$ from 
Lemma~\ref{l:PMPcompanion}. 
Then 
\begin{equation}\label{e:PMPproofmedius1}
\begin{aligned}
 (p-p_h,q_h)_{L^2(\Omega)} 
  &= (p,\nabla_\NC(v_\mathrm{CR}-v))_{L^2(\Omega)} 
   + (p,\nabla v)_{L^2(\Omega)}\\
  &\qquad\qquad\qquad - (p_h,\nabla_\NC v_\mathrm{CR})_{L^2(\Omega)}.
\end{aligned}
\end{equation}
The properties (i) and (iii) from Lemma~\ref{l:PMPcompanion} yield for the 
first term on the right-hand side 
\begin{equation}\label{e:PMPproofmedius2}
\begin{aligned}
 (p,\nabla_\NC(v_\mathrm{CR}-v))_{L^2(\Omega)} 
 & = (p-\Pi_0 p,\nabla_\NC(v_\mathrm{CR}-v))_{L^2(\Omega)}\\
 &  \lesssim \|p-\Pi_0 p\|_{L^2(\Omega)} \;\|\nabla_\NC v_\mathrm{CR}\|_{L^2(\Omega)}.
\end{aligned}
\end{equation}
The problems \eqref{e:PMPmixedproblem} and \eqref{e:PMPdP} lead
for the second and third term on the right-hand side of 
\eqref{e:PMPproofmedius1}
to 
\begin{align*}
 (p,\nabla v)_{L^2(\Omega)} - (p_h,\nabla_\NC v_\mathrm{CR})_{L^2(\Omega)} 
   = (\varphi,\nabla v)_{L^2(\Omega)} 
  -(\varphi,\nabla_\NC v_\mathrm{CR})_{L^2(\Omega)}.
\end{align*}
Since $-\ddiv\varphi=f$, it follows
\begin{align*}
& (\varphi,\nabla v)_{L^2(\Omega)} 
  -(\varphi,\nabla_\NC v_\mathrm{CR})_{L^2(\Omega)}\\
 &\qquad\qquad\qquad
  = (f,v-v_\mathrm{CR})_{L^2(\Omega)}
   + (f,v_\mathrm{CR})_{L^2(\Omega)} - (\varphi,\nabla_\NC v_\mathrm{CR})_{L^2(\Omega)}.
\end{align*}
Properties (ii) and (iii) of Lemma~\ref{l:PMPcompanion} prove
\begin{align*}
  = (f,v-v_\mathrm{CR})_{L^2(\Omega)}
  \lesssim \mathrm{osc}(f,\tri) \|\nabla_\NC v_\mathrm{CR}\|_{L^2(\Omega)}.
\end{align*}
The combination with \eqref{e:PMPproofmedius1} and \eqref{e:PMPproofmedius2}
and a Cauchy inequality yield 
\begin{align*}
 (p-p_h,q_h)_{L^2(\Omega)} 
  & \lesssim \Bigg(\|p-\Pi_0 p\|_{L^2(\Omega)}
   + \mathrm{osc}(f,\tri) \\
 &\quad
  + \sup_{v_\mathrm{CR}\in\mathrm{CR}^1_0(\tri)\setminus\{0\}}
        \frac{(f,v_\mathrm{CR})_{L^2(\Omega)} 
    - (\varphi,\nabla_\NC v_\mathrm{CR})_{L^2(\Omega)}} 
                 {\|\nabla_\NC v_\mathrm{CR}\|_{L^2(\Omega)} }\Bigg)
     \|q_h\|_{L^2(\Omega)}.
\end{align*}
This and 
\begin{align*}
 \|p-p_h\|_{L^2(\Omega)}^2 
   = \|p-\Pi_0 p\|_{L^2(\Omega)}^2 + \|q_h\|_{L^2(\Omega)}^2
   = \|p-\Pi_0 p\|_{L^2(\Omega)}^2 
     + (p-p_h,q_h)_{L^2(\Omega)} 
\end{align*}
prove the assertion.
\end{proof}

\begin{remark}[higher polynomial degrees] \label{r:PMPcompanionhigherpolynom}
For $k\geq 1$, Remark~\ref{r:PMPnoteqCR} implies that 
an analogue of Lemma~\ref{l:PMPcompanion} cannot 
be proved in the same way.
\end{remark}

\section{Adaptive algorithm}\label{s:PMPafem}

This section defines an adaptive algorithm based on separate marking 
and proves its quasi-optimal convergence.

\subsection{Adaptive algorithm and optimal convergence rates} 
\label{ss:PMPafemdef}

Let $\tri_0$ denote some initial shape-regular triangulation of $\Omega$, such 
that each triangle $T\in\tri$ is equipped with a refinement edge 
$E_T\in\edges(T)$. A proper choice of these refinement edges guarantees an 
overhead control \cite{BinevDahmenDeVore2004}.

Let $\mathbb{T}(N)$ denote the subset of $\mathbb{T}$ of all admissible 
triangulations with at most $\mathrm{card}(\tri_0) + N$ triangles.
The adaptive algorithm involves the overlay of two admissible triangulations
$\tri,\tri_\star\in\mathbb{T}$, which reads
\begin{align}\label{e:defoverlay}
  \tri\otimes\tri_\star:=\{T\in\tri\cup\tri_\star\mid 
      \exists K\in\tri,K_\star\in \tri_\star\text{ with }
       T\subseteq K\cap K_\star\}.
\end{align}

Given a triangulation $\tri_\ell$,
define for all $T\in\tri_\ell$ the local error estimator contributions by
\begin{equation}\label{e:PMPdefmulambda1}
\begin{aligned}
  \lambda^2(\tri_\ell,T)&:= \|h_\tri \curl_\NC p_h\|_{L^2(T)}^2
      + h_T \sum_{E\in \mathcal{E}(T)}  \|[p_h]_E\cdot \tau_E \|_{L^2(E)}^2,\\
  \mu^2(T)&:= \|\varphi-\Pi_k\varphi\|_{L^2(T)}^2
\end{aligned}
\end{equation}
and the global error estimators by
\begin{equation}\label{e:PMPdefmulambda2}
\begin{aligned}
 \lambda_\ell^2
&:=\lambda^2(\tri_\ell,\tri_\ell)
 &&\text{with}&
 \lambda^2(\tri_\ell,\mathcal{M})&:=&&\sum_{T\in \mathcal{M}} 
\lambda^2(\tri_\ell,T)
 &&
\text{for any }\mathcal{M}\subseteq\tri_\ell,
\\
 \mu^2_\ell
&:=\mu^2(\tri_\ell)
 &&\text{with}&
 \mu^2(\mathcal{M})&:=&&\sum_{T\in\mathcal{M}} \mu^2(T)
 && 
\text{for any }\mathcal{M}\subseteq\tri_\ell.
\end{aligned}
\end{equation}
The adaptive algorithm is driven by these two error estimators
and runs the following loop.

\begin{algo}[AFEM]
  \label{a:PMPafem}
\begin{algorithmic}
\State %dummy
\Require Initial triangulation $\tri_0$, parameters $0<\theta_A\le 1$,
    $0<\rho_B<1$, $0<\kappa$.
\For{$\ell=0,1,2,\dots$}
 \State {\it Solve.}
 Compute solution $(p_{\ell},\alpha_{\ell})\in X_h(\tri_\ell)\times 
Y_h(\tri_\ell)$ 
 of \eqref{e:PMPdP} with respect
 \State  \qquad
 to $\tri_\ell$.
 \State {\it Estimate.}
 Compute local contributions of the error estimators
 $\big(\lambda^2(\tri_\ell,T)\big)_{T\in\tri_\ell}$ 
 \State \qquad
 and 
 $(\mu^2(T))_{T\in\tri_\ell}$.
%  \State
 \If{$\mu_\ell^2\leq \kappa\lambda_\ell^2$}
 \State   
 {\it Mark.}
 The D\"orfler marking chooses a minimal subset 
 $\mathcal{M}_\ell\subseteq\tri_\ell$
 such that
 \State \qquad
 $
    \theta_A \lambda_\ell^2 
 \le  \lambda_\ell^2 (\tri_\ell,\mathcal{M}_\ell)
 $.
 \State
 {\it Refine.}
 Generate the smallest admissible refinement
 $\tri_{\ell+1}$ of $\tri_\ell$ in which 
 \State \qquad at least all 
 triangles in $\mathcal{M}_\ell$ are refined.
 \Else 
 \State
   {\it Mark.} Compute a triangulation $\tri\in\mathbb{T}$ with 
    $ \mu^2(\tri)\leq \rho_B\mu_\ell^2$.
%    \State \qquad
%    $\mathrm{card}(\tri) - \mathrm{card}(\tri_0)\lesssim (\rho_B\mu_\ell^2)^{-1/s}$.
 \State 
   {\it Refine.} Generate the overlay $\tri_{\ell+1}$ of $\tri_\ell$ and $\tri$.
 \EndIf
\EndFor
\Ensure Sequence of triangulations
  $\left(\tri_\ell\right)_{\ell\in\mathbb N_0}$,
  discrete solutions 
 $(p_{\ell},\alpha_{\ell})_{\ell\in\mathbb{N}_0}$ 
 and error estimators $(\lambda_\ell)_{\ell\in\mathbb{N}_0}$ 
 and $(\mu_\ell)_{\ell\in\mathbb{N}_0}$.
\hfill$\blacklozenge$
\end{algorithmic}
\end{algo}

\begin{remark}[separate versus collective marking]
The residual-based error estimator $\sqrt{\lambda^2+\mu^2}$
involves the term $\|\varphi - \Pi_k\varphi\|_{L^2(T)}$ without 
a multiplicative positive power of the mesh-size.
Therefore, the optimality of an adaptive algorithm based on 
collective marking (that is $\kappa=\infty$ and 
$\lambda$ replaced by $\sqrt{\lambda^2+\mu^2}$ in Algorithm~\ref{a:PMPafem}) 
does not follow from the abstract framework
from \cite{CarstensenFeischlPagePraetorius2014}. 
The reduction property (axiom (A2) from
\cite{CarstensenFeischlPagePraetorius2014}),
is not fulfilled.
Algorithm~\ref{a:PMPafem} considered here is based on separate 
marking. In this context, the optimality of the adaptive algorithm (see 
Theorem~\ref{t:PMPoptimalafem}) can be proved with a reduction property that 
only considers $\lambda$.
\end{remark}

\begin{remark}\label{r:PMPB1approx}
The step {\it Mark} in the second case 
($\mu_\ell^2>\kappa\lambda_\ell^2$)
can be realized by the algorithm \texttt{Approx}
from \cite{BinevDahmenDeVore2004,CarstensenRabus}, i.e.,
the thresholding second algorithm \cite{BinevDeVore2004} followed by a completion algorithm.
For this algorithm, the assumption (B1) optimal data approximation, which is 
assumed 
to hold in the following, follows from the axioms (B2) and (SA) from 
Subsection~\ref{ss:PMPaxiomB} \cite{CarstensenRabus}.
For a discussion about other algorithms that realize {\it Mark} in the second 
case, see~\cite{CarstensenRabus}. 
\end{remark}

For $s>0$ and $(p,\alpha,\varphi)\in X\times Y \times H(\ddiv,\Omega)$ define 
\begin{align*}
 \left| (p,\alpha,\varphi)\right|_{\mathcal{A}_s}
   := \sup_{N\in\mathbb{N}_0} N^s 
     & \inf_{\tri\in\mathbb{T}(N)} \Big( 
          \|p-\Pi_{X_h(\tri)} p \|_{L^2(\Omega)} \\ 
    &    + \inf_{\beta_\tri\in Y_h(\tri)} 
  \left\|\Curl(\alpha-\beta_\tri)\right\|_{L^2(\Omega)}
           + \|\varphi - \Pi_{X_h(\tri)} \varphi\|_{L^2(\Omega)}\Big).
\end{align*}

\begin{remark}[pure local approximation class]\label{r:PMPlocalapproxclass}
Since $\Omega$ is assumed to be a Lip\-schitz domain,
all patches in an admissible triangulation $\tri\in\mathbb{T}$
are edge-connected, i.e., for all vertices $z\in\nodes$ and 
triangles $T,K\in\tri$ with $z\in T\cap K$, there exists
$m\in\mathbb{N}_0$ and $K_0,\dots,K_m\in\tri$ with 
$K_0 = T$, $K_m=K$, 
$z\in K_0\cap \dots \cap K_m$ and $K_{j-1}\cap K_{j}\in\edges$
for all $1\leq j\leq m$.
Under this assumption,
\cite[Theorem~3.2]{Veeser2014} shows
\begin{align*}
 \min_{v_h\in P_{k+1}(\tri)\cap H^1(\Omega)} \|\nabla(v-v_h)\|_{L^2(\Omega)}
  \approx \|\nabla v - \Pi_{k}\nabla v\|_{L^2(\Omega)}
 \qquad\text{for all }v\in H^1(\Omega).
\end{align*}
Hence,
\begin{align*}
 \left| (p,\alpha,\varphi)\right|_{\mathcal{A}_s}
  &\approx \left| (p,\alpha,\varphi)\right|_{\mathcal{A}_s'}\\
   &:= \sup_{N\in\mathbb{N}_0} N^s 
      \inf_{\tri\in\mathbb{T}(N)} \Big( 
          \|p-\Pi_{X_h(\tri)} p \|_{L^2(\Omega)}\\
  &\qquad\quad + \left\|\Curl\alpha - \Pi_{X_h(\tri)}\Curl\alpha\right\|_{L^2(\Omega)}
           + \|\varphi - \Pi_{X_h(\tri)} \varphi\|_{L^2(\Omega)}\Big).
 \qedhere
\end{align*}
\end{remark}

In the following, we assume that the following assumption (B1) holds for the 
algorithm used in the step {\it Mark} for $\mu_\ell^2>\kappa\lambda_\ell^2$ 
(see Remark~\ref{r:PMPB1approx}).

\begin{assumption}[(B1) optimal data approximation]\label{as:PMPB1}
Assume that $\left| (p,\alpha,\varphi)\right|_{\mathcal{A}_\sigma}$ is finite.
Given a tolerance $\mathrm{Tol}$, 
the algorithm used in {\it Mark} 
in the second case ($\mu_\ell^2>\kappa\lambda_\ell^2$) 
in Algorithm~\ref{a:PMPafem}
computes $\tri_\star\in\mathbb{T}$ with 
\begin{equation*}
 \mathrm{card}(\tri_\star) - \mathrm{card}(\tri_0)
  \lesssim \mathrm{Tol}^{-1/(2\sigma)}
 \qquad\text{and}\qquad 
 \mu^2(\tri_\star)\leq \mathrm{Tol}.
 \qedhere
\end{equation*}
\end{assumption}

The following theorem states optimal convergence rates of 
Algorithm~\ref{a:PMPafem}.

\begin{theorem}[optimal convergence rates of AFEM]\label{t:PMPoptimalafem}
For $0<\rho_B<1$ and sufficiently small $0<\kappa$ and $0<\theta <1$, 
Algorithm~\ref{a:PMPafem}
computes sequences of triangulations $(\tri_\ell)_{\ell\in\mathbb{N}}$ 
and discrete solutions $(p_\ell,\alpha_\ell)_{\ell\in\mathbb{N}}$ 
for the right-hand side $\varphi$
of optimal rate of convergence in the sense that 
\begin{align*}
 (\mathrm{card}(\tri_\ell) - \mathrm{card}(\tri_0))^s 
  \Big(\|p-p_\ell\|_{L^2(\Omega)} + 
 \left\|\Curl(\alpha-\alpha_\ell)\right\|_{L^2(\Omega)}\Big)
   \lesssim \left| (p,\alpha,\varphi)\right|_{\mathcal{A}_s}.
\end{align*}
\end{theorem}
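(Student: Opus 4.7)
The plan is to invoke the abstract optimality framework for adaptive FEM with separate marking from \cite{CarstensenRabus} by verifying its axioms for the estimators $\lambda$ and $\mu$. The quasi-monotonicity (B2) of $\mu$ is immediate from the nesting $X_h(\tri)\subseteq X_h(\tri_\star)$ whenever $\tri_\star$ refines $\tri$, since $\mu$ is the $L^2$-projection residual of $\varphi$. Stability (A1) of $\lambda$ on a common triangle $T\in\tri\cap\tri_\star$ is a matter of trace and discrete inverse estimates bounding local contributions by $\|p_\star-p_h\|_{L^2}$ on the edge patch of $T$. Reduction (A2) on the set of refined triangles relies on the $\sqrt{2}$-shrinking of $h_T$ under newest-vertex bisection and on the vanishing of tangential jumps of $p_\star$ across edges interior to a refined patch.

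The hard part is the discrete reliability (A3),
\begin{equation*}
 \|p_\star-p_h\|_{L^2(\Omega)}^2 + \|\Curl(\alpha_\star-\alpha_h)\|_{L^2(\Omega)}^2
   \lesssim \lambda^2(\tri,\tri\setminus\tri_\star).
\end{equation*}
The proof would hinge on the projection property of Lemma~\ref{l:PMPintegralmean}, $\Pi_{X_h(\tri)}W_h(\tri_\star)=W_h(\tri)$. Since the mixed formulation is conforming and $p_\star-p_h+\Curl(\alpha_\star-\alpha_h)=\Pi_{X_h(\tri_\star)}\varphi-\Pi_{X_h(\tri)}\varphi$ is localized to $\tri\setminus\tri_\star$, Galerkin orthogonality reduces the control of the error to estimating $(p_\star-p_h,\,p_\star-\Pi_{X_h(\tri)}p_\star)$. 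Testing $p_\star-p_h$ against a fine-mesh discrete orthogonal gradient produced by a Crouzeix--Raviart-type companion (cf.\ Lemma~\ref{l:PMPcompanion} and Proposition~\ref{p:PMPequivCR} in the lowest-order case, or its generalization via the discrete Helmholtz decomposition on $\tri_\star$) and then performing a piecewise integration by parts produces exactly the interior term $\|h_\tri\curl_\NC p_h\|_{L^2(T)}$ and the tangential edge jump $h_T\|[p_h]_E\cdot\tau_E\|_{L^2(E)}$ on $T\in\tri\setminus\tri_\star$.

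Quasi-orthogonality (A4) follows from Galerkin orthogonality for the conforming mixed scheme on the nested spaces $X_h(\tri)\subseteq X_h(\tri_\star)$ and $Y_h(\tri)\subseteq Y_h(\tri_\star)$, together with the norm identities of Remarks~\ref{r:existence} and~\ref{r:dexistence}, yielding a perturbed Pythagoras inequality with a remainder controlled by $\|\varphi-\Pi_{X_h(\tri)}\varphi\|_{L^2}=\mu(\tri)$. Once (A1)--(A4) and (B1)--(B2) are in place, the abstract theorem of \cite{CarstensenRabus} applied to Algorithm~\ref{a:PMPafem} delivers the claimed optimal rate. The most delicate step remains (A3), where the novel mixed structure requires the projection property of Lemma~\ref{l:PMPintegralmean} to take over the role played by classical nodal-interpolation stability in standard conforming or Crouzeix--Raviart arguments.
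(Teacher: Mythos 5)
Your overall plan coincides with the paper's: invoke the separate-marking framework of Carstensen--Rabus and verify the axioms (A1)--(A4), (B1)--(B2), (SA), with the projection property of Lemma~\ref{l:PMPintegralmean} as the key structural ingredient. The outline of (A1), (A2), and (B2) is fine. However, there are concrete gaps in your treatment of discrete reliability and quasi-orthogonality.

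For discrete reliability (the paper's (A4), Theorem~\ref{t:PMPdrel}) the correct estimate is
\begin{align*}
 \|p_\tri - p_{\tri_\star}\|_{L^2(\Omega)}^2
  + \left\|\Curl(\alpha_\tri - \alpha_{\tri_\star})\right\|_{L^2(\Omega)}^2
 \lesssim \lambda^2(\tri,\tri\setminus\tri_\star) +
\mu^2(\tri,\tri\setminus\tri_\star),
\end{align*}
and the $\mu$-term is indispensable: splitting $p_\tri-p_{\tri_\star}=\sigma_{\tri_\star}+\Curl r_{\tri_\star}$ with $\sigma_{\tri_\star}\in W_h(\tri_\star)$ and $r_{\tri_\star}\in Y_h(\tri_\star)$, the contribution of $\sigma_{\tri_\star}$ reduces, via the projection property and \eqref{e:PMPdP}, to $(\Pi_{X_h(\tri)}\varphi - \Pi_{X_h(\tri_\star)}\varphi,\sigma_{\tri_\star})$, which is a data oscillation term on $\tri\setminus\tri_\star$, not a residual term. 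Your version drops it. Moreover, your idea of testing against a Crouzeix--Raviart companion lifted to $H^1_0(\Omega)$ cannot work here: Remark~\ref{r:PMPcompanionhigherpolynom} in the paper notes explicitly that the companion construction of Lemma~\ref{l:PMPcompanion} has no analogue for $k\geq 1$, whereas the theorem is for arbitrary $k$. What the paper actually uses for the $\Curl r_{\tri_\star}$ contribution is the Scott--Zhang quasi-interpolant of $r_{\tri_\star}$ into $Y_h(\tri)$, followed by piecewise integration by parts; the localization comes from the quasi-interpolant agreeing with $r_{\tri_\star}$ on edges common to $\tri$ and $\tri_\star$, not from a companion.

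For quasi-orthogonality (the paper's (A3), Theorem~\ref{t:PMPquasiorthogonality}), a level-by-level perturbed Pythagoras with remainder $\mu(\tri_{j-1})^2$ is insufficient: summing such remainders over $j\geq\ell$ does not produce a bound by $\lambda_{\ell-1}^2+\mu_{\ell-1}^2$, because $\mu_j$ need not be summable. The paper's proof instead exploits the nested $L^2$-projection structure, using the telescoping identity
\begin{align*}
 \sum_{j=\ell}^{M}
   \|\Pi_{X_h(\tri_j)}\varphi-\Pi_{X_h(\tri_{j-1})}\varphi\|_{L^2(\Omega)}^2
   = \|\Pi_{X_h(\tri_M)} \varphi -
                         \Pi_{X_h(\tri_{\ell-1})}\varphi\|_{L^2(\Omega)}^2
   \leq \mu_{\ell-1}^2,
\end{align*}
combined with the discrete reliability already established to control the boundary term $(\varphi,p_{\ell-1}-p_M)$. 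You would need this global telescoping structure, not just a two-level estimate, to make the axiom (A3) hold.
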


The proof follows from the abstract framework
of \cite{CarstensenRabus}, which employs the bounded 
overhead \cite{BinevDahmenDeVore2004} of the newest-vertex bisection,
under the assumptions (A1)--(A4) and (B2) and (SA)
which are proved in 
Subsections~\ref{ss:PMPstabilityreduction}--\ref{ss:PMPaxiomB}.

\subsection{(A1) stability and (A2) reduction}\label{ss:PMPstabilityreduction}

The following two theorems follow from the structure of $\lambda$.

\begin{theorem}[stability]
Let $\tri_\star$ be an admissible refinement of $\tri$
and $\mathcal{M}\subseteq\tri\cap\tri_\star$.
Let $(p_{\tri_\star},\alpha_{\tri_\star})\in X_h(\tri_\star)\times 
Y_h(\tri_\star)$
and $(p_{\tri},\alpha_{\tri})\in X_h(\tri)\times Y_h(\tri)$
be the respective discrete solutions to \eqref{e:PMPdP}.
Then,
\begin{align*}
  \lvert \lambda(\tri_\star,\mathcal{M}) - \lambda(\tri,\mathcal{M})\rvert
   \lesssim \| p_{\tri_\star} - p_\tri\|_{L^2(\Omega)}.
\end{align*}
\end{theorem}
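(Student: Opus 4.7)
The plan is to reduce the assertion to a local inverse/trace estimate on the discrete error $\delta := p_{\tri_\star}-p_\tri$. A double application of the reverse triangle inequality---first in the $\ell^2$ sum defining $\lambda(\cdot,\mathcal M)$, then inside each $L^2$ norm appearing in $\lambda(\cdot,T)$---reduces the left-hand side to
\begin{align*}
\bigl|\lambda(\tri_\star,\mathcal M)-\lambda(\tri,\mathcal M)\bigr|^2
\leq \sum_{T\in\mathcal M}\Bigl(h_T^2\|\curl_\NC\delta\|_{L^2(T)}^2
+ h_T\sum_{E\in\mathcal E(T)}\bigl\|\bigl([p_{\tri_\star}]_E-[p_\tri]_E\bigr)\cdot\tau_E\bigr\|_{L^2(E)}^2\Bigr).
\end{align*}
Crucially, for $T\in\mathcal M$ both the mesh-size $h_T$ and the edge set $\mathcal{E}(T)$ coincide in $\tri$ and $\tri_\star$, so only the data $p_h$ inside each norm have changed.

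The slightly subtle point is the jump term, because the neighbours of $T$ across $E$ differ in the two triangulations. Let $T_-\in\tri$ denote the $\tri$-neighbour of $T$ across $E$ and $T_E\in\tri_\star$ the $\tri_\star$-neighbour; then $T_E\subseteq T_-$, while $p_\tri$ is one single polynomial on all of $T_-$, so $p_\tri|_{T_E}=p_\tri|_{T_-}|_{T_E}$. This identifies $[p_{\tri_\star}]_E-[p_\tri]_E$ with the $\tri_\star$-jump $[\delta]_E$. Since $\tri_\star$ is admissible, $E$ is a full edge of both $T$ and $T_E$ in $\tri_\star$, so shape regularity of $\tri_\star$ forces $h_{T_E}\sim |E|\sim h_T$.

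From here, because $\tri_\star$ refines $\tri$ and both discrete solutions lie in their respective $X_h$-spaces, $\delta|_{T'}$ is a polynomial of degree $\leq k$ on every $T'\in\tri_\star$. Standard inverse and scaled trace inequalities on $T$ and on $T_E$ (using $h_{T_E}\sim h_T$) therefore yield the local bound
\begin{align*}
h_T^2\|\curl_\NC\delta\|_{L^2(T)}^2
+ h_T\sum_{E\in\mathcal E(T)}\|[\delta]_E\cdot\tau_E\|_{L^2(E)}^2
\lesssim \|\delta\|_{L^2(\omega_T)}^2,
\end{align*}
where $\omega_T$ is the union of $T$ with its $\tri_\star$-neighbours. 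Summing over $T\in\mathcal M$ and invoking the uniformly bounded overlap of the patches $\{\omega_T\}_{T\in\mathcal M}$ (again from shape regularity) produces the desired bound by $\|\delta\|_{L^2(\Omega)}^2$.

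The only genuinely non-routine step is the jump-compatibility observation in the second paragraph: without identifying $[p_{\tri_\star}]_E-[p_\tri]_E$ with a single $\tri_\star$-jump of $\delta$, one cannot apply the trace inequality cleanly and the reduction to $\|\delta\|_{L^2(\Omega)}$ breaks down. Everything else is a standard use of inverse and trace inequalities together with the admissibility and shape regularity of the two triangulations.
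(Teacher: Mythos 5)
Your proof is correct and takes exactly the route the paper gestures at: the paper's proof is a one-line citation to Proposition~3.3 of \cite{CasconKreuzerNochettoSiebert2008} (triangle inequalities, inverse inequalities, trace inequality), and your argument is the spelled-out version of that standard lemma, including the one genuine subtlety you correctly flag---that on $E\in\edges(T)$ with $T\in\tri\cap\tri_\star$ the difference $[p_{\tri_\star}]_E-[p_\tri]_E$ coincides with the $\tri_\star$-jump of $\delta$ because $p_\tri$ is a single polynomial on the coarse neighbour, and that admissibility prevents hanging nodes so $E$ remains a full edge of the $\tri_\star$-neighbour $T_E$ with $h_{T_E}\approx h_T$.
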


\begin{proof}
This follows with triangle inequalities,
inverse inequalities and the trace inequality from \cite[p.~282]{BrennerScott08}
as in \cite[Proposition~3.3]{CasconKreuzerNochettoSiebert2008}.
\end{proof}

\begin{theorem}[reduction]
Let $\tri_\star$ be an admissible refinement of $\tri$.
Then there exists $0<\rho_2< 1$ and $\Lambda_2<\infty$ such that
\begin{align*}
 \lambda^2(\tri_\star,\tri_\star\setminus\tri)
  \leq \rho_2\lambda^2(\tri,\tri\setminus\tri_\star)
   + \Lambda_2 \|p_{\tri_\star}- p_\tri\|_{L^2(\Omega)}^2.
\end{align*}
\end{theorem}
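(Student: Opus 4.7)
The plan is to mimic the standard proof of axiom (A2) in the abstract framework of Casc\'on--Kreuzer--Nochetto--Siebert, combining the geometric contraction of areas and edge-lengths under newest-vertex bisection with a Young-type separation $p_{\tri_\star}=p_\tri+(p_{\tri_\star}-p_\tri)$ and local inverse/trace inequalities. I would split $\lambda^2(\tri_\star,\tri_\star\setminus\tri)$ into its volume and edge contributions and treat each separately.

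For the volume part, every $T\in\tri_\star\setminus\tri$ sits inside a unique ancestor $T'\in\tri\setminus\tri_\star$; newest-vertex bisection yields $\mathrm{meas}_2(T)\leq \mathrm{meas}_2(T')/2$, hence $h_T^2\leq h_{T'}^2/2$. Since $p_\tri|_{T'}$ is a single polynomial of degree $\leq k$, $\curl p_\tri$ is well-defined pointwise on $T'$ and $\|\curl p_\tri\|_{L^2(T')}^2=\sum_{T\subsetneq T'}\|\curl p_\tri\|_{L^2(T)}^2$. The inequality $(a+b)^2\leq(1+\delta)a^2+(1+\delta^{-1})b^2$ applied to $\curl_\NC p_{\tri_\star}$ then gives
\begin{align*}
h_T^2\|\curl_\NC p_{\tri_\star}\|_{L^2(T)}^2
&\leq \tfrac{1+\delta}{2}\,h_{T'}^2\|\curl p_\tri\|_{L^2(T)}^2\\
&\quad +(1+\delta^{-1})\,h_T^2\|\curl_\NC(p_{\tri_\star}-p_\tri)\|_{L^2(T)}^2,
\end{align*}
and an inverse inequality on the polynomial $(p_{\tri_\star}-p_\tri)|_T$ absorbs the last term into $C(1+\delta^{-1})\|p_{\tri_\star}-p_\tri\|_{L^2(T)}^2$.

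For the edge part, edges $E\in\edges(T)$ for $T\in\tri_\star\setminus\tri$ split into two categories. If $E$ lies in the interior of some $T'\in\tri\setminus\tri_\star$, then $p_\tri$ is a single polynomial across $E$, so $[p_\tri]_E=0$ and $[p_{\tri_\star}]_E=[p_{\tri_\star}-p_\tri]_E$; a trace inequality on the two adjacent elements followed by an inverse inequality yields $h_T\|[p_{\tri_\star}]_E\cdot\tau_E\|_{L^2(E)}^2\lesssim\|p_{\tri_\star}-p_\tri\|_{L^2(\omega_E)}^2$. If instead $E\subseteq E'$ for some $E'\in\edges(\tri)$, bisection yields $|E|\leq |E'|/2$ unless $E=E'$, in which case the neighbour of $T'$ across $E'$ has been refined (otherwise $T\in\tri$); splitting $[p_{\tri_\star}]_E=[p_\tri]_{E'}|_E+[p_{\tri_\star}-p_\tri]_E$ by Young and combining $h_T\leq h_{T'}/\sqrt2$ with $\|[p_\tri]|_{E'}\|_{L^2(E)}^2\leq\|[p_\tri]|_{E'}\|_{L^2(E')}^2$ contracts the $[p_\tri]_{E'}$ contribution by a factor $\leq\tfrac{1+\delta}{\sqrt2}$ relative to the corresponding edge term in $\lambda^2(\tri,T')$, while the difference is absorbed by $\|p_{\tri_\star}-p_\tri\|_{L^2(\omega_E)}^2$ as in the first case.

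Summing all contributions, with shape-regularity bounding the overlap of patches $\omega_E$, produces an estimate $\lambda^2(\tri_\star,\tri_\star\setminus\tri)\leq(1+\delta)\rho_0\,\lambda^2(\tri,\tri\setminus\tri_\star)+C(1+\delta^{-1})\|p_{\tri_\star}-p_\tri\|_{L^2(\Omega)}^2$ for some $\rho_0<1$ depending only on $\tri_0$ and the shape-regularity. Choosing $\delta$ small enough that $\rho_2:=(1+\delta)\rho_0<1$ and setting $\Lambda_2:=C(1+\delta^{-1})$ finishes the argument. The main obstacle I anticipate is the edge case analysis, in particular the unrefined-edge subcase $E=E'$ where the jumps in $\tri_\star$ and $\tri$ genuinely differ because the opposite triangle has been refined; verifying that this discrepancy lives entirely inside $p_{\tri_\star}-p_\tri$ requires some care, but it is the only non-routine step.
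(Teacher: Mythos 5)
Your proposal is correct and takes essentially the same route as the paper, which simply cites Corollary 3.4 of Casc\'on--Kreuzer--Nochetto--Siebert and the mesh-size reduction $h_{\tri_\star}^2|_T\leq h_\tri^2|_T/2$; you have written out that argument in full. One small inaccuracy in the exposition of the $E=E'$ sub-case: the reason $T\notin\tri$ there is simply that the ancestor $T'$ itself has been refined (since $T\subsetneq T'$), not anything about the neighbour across $E'$, but the conclusion $h_T\leq h_{T'}/\sqrt{2}$ is unaffected and the rest of the reasoning goes through.
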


\begin{proof}
This follows with a triangle inequality and the mesh-size 
reduction property $h_{\tri_\star}^2\vert_T\leq h_\tri^2\vert_T/2$
for all $T\in\tri_\star\setminus\tri$
as in \cite[Corollary~3.4]{CasconKreuzerNochettoSiebert2008}.
\end{proof}

\subsection{(A4) discrete reliability}\label{ss:PMPdrel}

The following theorem proves discrete reliability, i.e., 
the difference between two discrete solutions is bounded by the error 
estimators on refined triangles only.

\begin{theorem}[discrete reliability]\label{t:PMPdrel}
 Let $\tri_\star$ be an admissible refinement of $\tri$ with 
respective discrete solutions 
$(p_{\tri_\star},\alpha_{\tri_\star})\in X_h(\tri_\star)\times Y_h(\tri_\star)$
and $(p_{\tri},\alpha_{\tri})\in X_h(\tri)\times Y_h(\tri)$.
Then,
\begin{align*}
 \|p_\tri - p_{\tri_\star}\|_{L^2(\Omega)}^2 
  + \left\|\Curl(\alpha_\tri - \alpha_{\tri_\star})\right\|_{L^2(\Omega)}^2
 \lesssim \lambda^2(\tri,\tri\setminus\tri_\star) + 
\mu^2(\tri,\tri\setminus\tri_\star).
\end{align*}
\end{theorem}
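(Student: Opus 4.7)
The plan exploits the strong-form identity $p_h+\Curl\alpha_h=\Pi_{X_h(\tri)}\varphi$ from the remark after \eqref{e:PMPdP}, valid on every mesh, together with the nestedness $X_h(\tri)\subseteq X_h(\tri_\star)$ and $Y_h(\tri)\subseteq Y_h(\tri_\star)$. Set $e_p:=p_\tri-p_{\tri_\star}$ and $e_\alpha:=\alpha_\tri-\alpha_{\tri_\star}$. Subtracting the two strong-form identities gives $e_p+\Curl e_\alpha=\Pi_{X_h(\tri)}\varphi-\Pi_{X_h(\tri_\star)}\varphi$. Because the $L^2$ projection onto the discontinuous space $P_k(\tri;\R^2)$ is elementwise, the right-hand side vanishes on every $T\in\tri\cap\tri_\star$; inserting $\Pi_k\varphi|_T$ on $T\in\tri\setminus\tri_\star$ as an intermediate approximant yields $\|\Pi_{X_h(\tri)}\varphi-\Pi_{X_h(\tri_\star)}\varphi\|_{L^2(\Omega)}^2\lesssim\mu^2(\tri,\tri\setminus\tri_\star)$. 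Squaring $e_p+\Curl e_\alpha$ in $L^2$ therefore reduces the claim to controlling the cross term $(e_p,\Curl e_\alpha)_{L^2(\Omega)}$ by $\lambda(\tri,\tri\setminus\tri_\star)\,\|\Curl e_\alpha\|_{L^2(\Omega)}$.

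To estimate this cross term I invoke two Galerkin orthogonalities: the discrete second equation on $\tri_\star$ tested with $e_\alpha\in Y_h(\tri_\star)$ gives $(p_{\tri_\star},\Curl e_\alpha)_{L^2(\Omega)}=0$, and the discrete second equation on $\tri$ yields $(p_\tri,\Curl\beta_h)_{L^2(\Omega)}=0$ for all $\beta_h\in Y_h(\tri)$. Hence $(e_p,\Curl e_\alpha)_{L^2(\Omega)}=(p_\tri,\Curl(e_\alpha-\beta_h))_{L^2(\Omega)}$ for any such $\beta_h$. A piecewise integration by parts over $\tri$, valid because $e_\alpha-\beta_h\in H^1(\Omega)$, produces exactly a volume term involving $\curl_\NC p_\tri$ and an edge term involving the tangential jumps $[p_\tri\cdot\tau_E]$, the two ingredients of $\lambda$. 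I would then take $\beta_h:=Je_\alpha$ for a Scott-Zhang-type quasi-interpolation $J:Y\to Y_h(\tri)$ whose dual functionals are placed on edges of $\tri\cap\tri_\star$; this forces $(e_\alpha-Je_\alpha)|_T=0$ whenever the patch $\omega_T$ is disjoint from $\tri\setminus\tri_\star$, while retaining the first-order bound $\|h_\tri^{-1}(e_\alpha-Je_\alpha)\|_{L^2(T)}+\|\nabla(e_\alpha-Je_\alpha)\|_{L^2(T)}\lesssim\|\nabla e_\alpha\|_{L^2(\omega_T)}$. Cauchy-Schwarz applied triangle- and edge-wise, summed only over elements of $\tri\setminus\tri_\star$ and their direct neighbors, then yields $|(e_p,\Curl e_\alpha)_{L^2(\Omega)}|\lesssim\lambda(\tri,\tri\setminus\tri_\star)\,\|\Curl e_\alpha\|_{L^2(\Omega)}$, and a Young inequality absorbs $\tfrac12\|\Curl e_\alpha\|_{L^2(\Omega)}^2$ into the left-hand side.

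The hardest step is the construction of the quasi-interpolation $J$ with the combined properties of conformity in $Y_h(\tri)$ (including the vanishing integral-mean constraint of $Y$, which is restored by subtracting a constant annihilated by $\Curl$), local polynomial reproduction on patches of $\tri$ entirely contained in $\tri\cap\tri_\star$, and the standard first-order stability bound. All three are classical for Scott-Zhang with admissibly chosen preservation edges, yet the verification that the interface sum indeed localizes to a bounded-depth neighborhood of $\tri\setminus\tri_\star$---so that it is controlled by $\lambda^2(\tri,\tri\setminus\tri_\star)$ up to shape-regularity---is the point that requires the most care.
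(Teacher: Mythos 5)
Your proof is correct and reaches the stated estimate, but it organizes the argument differently from the paper. You start from the strong-form identity $p_h+\Curl\alpha_h=\Pi_{X_h(\tri)}\varphi$, subtract to get $e_p+\Curl e_\alpha=\Pi_{X_h(\tri)}\varphi-\Pi_{X_h(\tri_\star)}\varphi$, and then expand the $L^2$ square. This reduces everything to the data bound $\|\Pi_{X_h(\tri)}\varphi-\Pi_{X_h(\tri_\star)}\varphi\|_{L^2(\Omega)}\lesssim\mu(\tri,\tri\setminus\tri_\star)$ plus an estimate of the cross term $(e_p,\Curl e_\alpha)_{L^2(\Omega)}=(p_\tri,\Curl(e_\alpha-\beta_h))_{L^2(\Omega)}$ via Scott--Zhang interpolation, integration by parts, and a final Young absorption. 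The paper instead decomposes $e_p=\sigma_{\tri_\star}+\Curl r_{\tri_\star}$ with $\sigma_{\tri_\star}\in W_h(\tri_\star)$ and $r_{\tri_\star}\in Y_h(\tri_\star)$, so that the $L^2$ orthogonality of $W_h(\tri_\star)$ and $\Curl Y_h(\tri_\star)$ yields a Pythagorean split of $\|e_p\|_{L^2(\Omega)}^2$ with no Young inequality; the data term then arrives through the projection property (Lemma~\ref{l:PMPintegralmean}) applied to $\Pi_{X_h(\tri)}\sigma_{\tri_\star}$, whereas you obtain it directly from the strong form and never need that lemma in this proof. The Scott--Zhang machinery, integration by parts, and the elementwise vanishing of the projection difference on $\tri\cap\tri_\star$ are common to both.

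One small caveat concerns your localization of the interpolation error. You state $(e_\alpha-Je_\alpha)\vert_T=0$ whenever the patch $\omega_T$ avoids $\tri\setminus\tri_\star$, which would leave the volume and edge contributions supported on a one-element neighbourhood of $\tri\setminus\tri_\star$ and thus yield $\lambda^2$ over that neighbourhood rather than over $\tri\setminus\tri_\star$ itself, as the theorem asserts. The paper gets the sharp localization because its quasi-interpolant is built so that $(r_\tri)\vert_E=(r_{\tri_\star})\vert_E$ on every edge $E\in\edges(\tri)\cap\edges(\tri_\star)$, which already forces $(r_\tri)\vert_T=(r_{\tri_\star})\vert_T$ for any $T\in\tri\cap\tri_\star$ without requiring the whole patch of $T$ to be unchanged. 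Either variant suffices for the AFEM optimality argument, since a bounded-overlap neighbourhood of the refined set is admissible in the axioms framework, but to obtain the theorem verbatim you should use the edge-trace-preserving Scott--Zhang construction rather than the patch criterion.
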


\begin{proof}
Recall the definition of $W_h(\tri_\star)$ from~\eqref{e:PMPMh}.
Since $p_\tri-p_{\tri_\star}\in X_h(\tri_\star)$, there exist
$\sigma_{\tri_\star}\in W_h(\tri_\star)$ and $r_{\tri_\star}\in Y_h(\tri_\star)$
with $p_\tri-p_{\tri_\star} = \sigma_{\tri_\star} + \Curl r_{\tri_\star}$.
Since $W_h(\tri_\star)\bot_{L^2(\Omega)} \Curl Y_h(\tri_\star)$,
\begin{align*}
 \|\sigma_{\tri_\star}\|_{L^2(\Omega)}^2 
   + \left\|\Curl r_{\tri_\star}\right\|_{L^2(\Omega)}^2 = 
\|p_\tri-p_{\tri_\star}\|_{L^2(\Omega)}^2.
\end{align*}
The orthogonality furthermore implies that
the discrete error can be split as
\begin{align*}
 \|p_\tri-p_{\tri_\star}\|_{L^2(\Omega)}^2
 = (p_\tri-p_{\tri_\star},\sigma_{\tri_\star})_{L^2(\Omega)}
   + (p_\tri-p_{\tri_\star}, \Curl r_{\tri_\star})_{L^2(\Omega)}.
\end{align*}
The projection property, Lemma~\ref{l:PMPintegralmean},
proves $\Pi_{X_h(\tri)}\sigma_{\tri_\star}\in W_h(\tri)$.
Hence, problem~\eqref{e:PMPdP} implies that the first term of the right-hand 
side equals 
\begin{align*}
 (p_\tri-p_{\tri_\star},\sigma_{\tri_\star})_{L^2(\Omega)}
 = (\Pi_{X_h(\tri)}\varphi - \varphi,\sigma_{\tri_\star})_{L^2(\Omega)}
 = (\Pi_{X_h(\tri)}\varphi - \Pi_{X_h(\tri_\star)}\varphi,
      \sigma_{\tri_\star})_{L^2(\Omega)}.
\end{align*}
For any triangle $T\in\tri\cap\tri_\star$,
it holds $(\Pi_{X_h(\tri)}\varphi - \Pi_{X_h(\tri_\star)}\varphi)\vert_T=0$.
Therefore,
\begin{align*}
 (\Pi_{X_h(\tri)}\varphi - \Pi_{X_h(\tri_\star)}\varphi,
      \sigma_{\tri_\star})_{L^2(\Omega)}
  \leq \|\Pi_{X_h(\tri)}\varphi 
       - \Pi_{X_h(\tri_\star)}\varphi\|_{\tri\setminus\tri_\star}
     \; \|\sigma_{\tri_\star}\|_{L^2(\Omega)}.
\end{align*}
Since $\tri_\star$ is a refinement of $\tri$, 
it holds
\begin{align*}
 \|\Pi_{X_h(\tri)}\varphi 
       - \Pi_{X_h(\tri_\star)}\varphi\|_{\tri\setminus\tri_\star}
 = \|\Pi_{X_h(\tri_\star)}(\Pi_{X_h(\tri)}\varphi 
       - \varphi)\|_{\tri\setminus\tri_\star}
 \leq  \|\varphi - \Pi_{X_h(\tri)}\varphi\|_{\tri\setminus\tri_\star}.
\end{align*}

Let $r_\tri\in Y_h(\tri)$ denote 
the quasi interpolant from \cite{ScottZhang1990}
of $r_{\tri_\star}$ which satisfies the approximation 
and stability properties
\begin{align*}%\label{e:PMPquasiinterpolation}
  \|h_\tri^{-1} (r_{\tri_\star}-r_\tri)\|_{L^2(\Omega)} 
  + \left\|\Curl(r_{\tri_\star}-r_\tri)\right\|_{L^2(\Omega)} 
  \lesssim \left\|\Curl r_{\tri_\star}\right\|_{L^2(\Omega)}
\end{align*}
and $(r_\tri)\vert_E = (r_{\tri_\star})\vert_E$ for all 
edges $E\in \edges(\tri)\cap\edges(\tri_\star)$.
Since $p_\tri\in W_h(\tri)$ and 
$p_{\tri_\star}\in W_h(\tri_\star)$, 
\begin{align}\label{e:PMPdrelproofNCerror}
 (p_\tri-p_{\tri_\star}, \Curl r_{\tri_\star})_{L^2(\Omega)}
  = (p_\tri, \Curl (r_{\tri_\star}-r_\tri))_{L^2(\Omega)}.
\end{align}
An integration by parts leads to 
\begin{align*}
 (p_\tri, \Curl (r_{\tri_\star}-r_\tri))_{L^2(\Omega)}
  &= -(\curl_\NC p_\tri, r_{\tri_\star}-r_\tri)_{L^2(\Omega)}\\
  &\qquad\qquad + \sum_{E\in\mathcal{E}(\tri)} 
         \int_E [p_\tri\cdot\tau_E]_E (r_{\tri_\star}-r_\tri)\,ds.
\end{align*}
For a triangle $T\in \tri\cap\tri_\star$, any edge 
$E\in\edges(T)$ satisfies $E\in\edges(\tri)\cap\edges(\tri_\star)$.
Hence, $(r_\tri)\vert_T = (r_{\tri_\star})\vert_T$
for all $T\in\tri\cap\tri_\star$.
This, the Cauchy inequality and the approximation and 
stability properties of the quasi interpolant lead to 
\begin{align*}
 -(\curl_\NC p_\tri, r_{\tri_\star}-r_\tri)_{L^2(\Omega)}
 \lesssim \|h_\tri \curl_\NC p_\tri\|_{\tri\setminus\tri_\star}
    \left\|\Curl r_{\tri_\star}\right\|_{L^2(\Omega)}.
\end{align*}
Since $(r_\tri)\vert_E = (r_{\tri_\star})\vert_E$ for all 
edges $E\in \edges(\tri)\cap\edges(\tri_\star)$,
the approximation and stability properties of the quasi interpolant
and the trace inequality \cite[p.~282]{BrennerScott08} lead 
to 
\begin{equation}\label{e:proofdrel2}
\begin{aligned}
 &\sum_{E\in\mathcal{E}} 
         \int_E [p_\tri\cdot\tau_E]_E (r_{\tri_\star}-r_\tri)\,ds\\
 &\qquad\qquad\qquad\qquad
   \lesssim \sqrt{\sum_{E\in\edges(\tri)\setminus\edges(\tri_\star)}
       h_T \|[p_\tri\cdot\tau_E]_E\|_{L^2(E)}^2}
   \left\|\Curl r_{\tri_\star}\right\|_{L^2(\Omega)}.
\end{aligned}
\end{equation}
The combination of the previous displayed inequalities
yields 
\begin{align*}
 \|p_\tri - p_{\tri_\star}\|_{L^2(\Omega)}^2 
  \lesssim \lambda^2(\tri,\tri\setminus\tri_\star) 
+ \mu^2(\tri,\tri\setminus\tri_\star).
\end{align*}
Since $\Curl\alpha_\tri = \Pi_{X_h(\tri)}\varphi - p_\tri$ and 
$\Curl\alpha_{\tri_\star} = \Pi_{X_h(\tri_\star)}\varphi - p_{\tri_\star}$,
the triangle inequality yields the assertion.
\end{proof}

The discrete reliability of Theorem~\ref{t:PMPdrel} together with the 
convergence of the discretization proves reliability of the 
residual-based error estimator. This is summarized in the following 
proposition.

\begin{proposition}[efficiency and reliability of the residual-based error 
estimator]
\label{p:effrelResidualEst}
Let $(p,\alpha)\in X\times Y$ and $(p_h,\alpha_h)\in X_h(\tri)\times Y_h(\tri)$ 
be the solutions 
to \eqref{e:PMPmixedproblem} and \eqref{e:PMPdP} for some $\tri\in\mathbb{T}$.
There exist constants $C_\mathrm{eff}, C_\mathrm{rel}>0$ with 
 \begin{align*}
    C_\mathrm{eff}^{-2}(\lambda^2(\tri,\tri)+\mu^2(\tri))
   & \leq \|p-p_h\|_{L^2(\Omega)}^2 + 
\left\|\Curl(\alpha-\alpha_h)\right\|_{L^2(\Omega)}^2\\
   & \leq C_\mathrm{rel}^2 (\lambda^2(\tri,\tri)+\mu^2(\tri)).
 \end{align*}
\end{proposition}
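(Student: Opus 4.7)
The plan is to treat the two inequalities separately, reliability by a limit argument out of discrete reliability and efficiency by the classical bubble function machinery together with the strong-form identity.

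\emph{Reliability (upper bound).} Following the remark that precedes the proposition, the strategy is to derive reliability from the discrete reliability in Theorem~\ref{t:PMPdrel} together with a density/convergence argument. Fix the triangulation $\tri$ and consider a sequence of uniform refinements $(\tri_k)_{k\in\mathbb N}$ with $h_{\tri_k}\to 0$. Theorem~\ref{t:PMPbestapprox} together with the density of $\bigcup_k X_h(\tri_k)$ in $X$ and of $\bigcup_k Y_h(\tri_k)$ in $Y$ (using standard polynomial interpolation) shows $p_{\tri_k}\to p$ and $\Curl\alpha_{\tri_k}\to\Curl\alpha$ in $L^2(\Omega)$. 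Apply Theorem~\ref{t:PMPdrel} with the pair $(\tri,\tri_k)$ to obtain
\begin{equation*}
\|p_\tri-p_{\tri_k}\|_{L^2(\Omega)}^2+\left\|\Curl(\alpha_\tri-\alpha_{\tri_k})\right\|_{L^2(\Omega)}^2
\lesssim \lambda^2(\tri,\tri\setminus\tri_k)+\mu^2(\tri,\tri\setminus\tri_k)
\le \lambda^2(\tri,\tri)+\mu^2(\tri),
\end{equation*}
and pass to the limit $k\to\infty$ to conclude.

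\emph{Efficiency (lower bound).} The idea is to split $\lambda^2+\mu^2$ into the data term and the genuine residual and handle them separately. The strong form $p_h+\Curl\alpha_h=\Pi_k\varphi$ of \eqref{e:PMPdPeq1} combined with $p+\Curl\alpha=\varphi$ from \eqref{e:PMPmixedproblem} yields
\begin{equation*}
(p-p_h)+\Curl(\alpha-\alpha_h)=\varphi-\Pi_k\varphi,
\end{equation*}
so by the triangle inequality $\mu^2(\tri)\le 2\|p-p_h\|_{L^2(\Omega)}^2+2\|\Curl(\alpha-\alpha_h)\|_{L^2(\Omega)}^2$. For the two contributions of $\lambda^2$, the key observation is that $p=\nabla\widetilde u$ for some $\widetilde u\in H^1_0(\Omega)$; hence $\curl p=0$ in the distributional sense on $\Omega$ and $[p]_E\cdot\tau_E=0$ across interior edges in the tangential trace sense, so that $\curl_\NC p_h=\curl_\NC(p_h-p)$ piecewise and $[p_h]_E\cdot\tau_E=-[p-p_h]_E\cdot\tau_E$. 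Now apply the standard Verfürth bubble function technique: on each $T\in\tri$, test $\curl_\NC p_h|_T\in P_{k-1}(T)$ against its product with the interior bubble $b_T$, integrate by parts (no boundary terms as $b_T\in H^1_0(T)$), use norm equivalence on polynomial spaces and the inverse estimate $\|\Curl(b_T r)\|_{L^2(T)}\lesssim h_T^{-1}\|r\|_{L^2(T)}$ to derive $h_T\|\curl p_h\|_{L^2(T)}\lesssim\|p-p_h\|_{L^2(T)}$. For each interior edge $E=T_+\cap T_-$, extend $[p_h]_E\cdot\tau_E$ constantly in the normal direction, multiply with the edge bubble $b_E$ supported on $\omega_E=T_+\cup T_-$, integrate by parts on each triangle, and use a scaled trace inequality to obtain $h_T^{1/2}\|[p_h]_E\cdot\tau_E\|_{L^2(E)}\lesssim\|p-p_h\|_{L^2(\omega_E)}+h_T\|\curl_\NC p_h\|_{L^2(\omega_E)}$. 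Summing over $T$ and $E$ with finite overlap of the patches combines with the previous two estimates to give $\lambda^2(\tri,\tri)\lesssim\|p-p_h\|_{L^2(\Omega)}^2$.

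\emph{Main obstacle.} The reliability step is short once convergence of the discrete solutions on uniform refinements is established, and the bubble test for the volume curl residual is routine. The delicate point is the edge jump estimate: $p\in L^2$ lacks pointwise tangential traces, so the cancellation $[p_h]_E\cdot\tau_E=-[p-p_h]_E\cdot\tau_E$ must be interpreted via the test $\int_E([p_h]\cdot\tau_E)\psi\,ds$ with $\psi=b_E\tilde r_E$, expressed through an integration by parts on $\omega_E$ that pairs $\Curl\psi\in L^2(\omega_E)$ against $p-p_h$; the boundary contribution on $\partial\omega_E$ vanishes since $b_E$ vanishes there, whereas the distributional identity $\curl p=0$ eliminates the interior curl of $p$. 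Making this rigorous with the correct scaling in $h_T$ is the only genuinely technical piece.
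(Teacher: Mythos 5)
Your proof is correct and follows essentially the same route as the paper: reliability is obtained by combining the discrete reliability of Theorem~\ref{t:PMPdrel} with convergence of the discrete solutions on refined meshes (guaranteed by Theorem~\ref{t:PMPbestapprox} and density), and efficiency follows from the strong-form identity $(p-p_h)+\Curl(\alpha-\alpha_h)=\varphi-\Pi_k\varphi$ together with the standard Verf\"urth bubble-function technique. You merely spell out the bubble-function estimates in more detail than the paper's one-line citation of \cite{Verfuerth96}.
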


\begin{proof}
The a~priori error estimate from Theorem~\ref{t:PMPbestapprox} implies the 
convergence of the discrete solutions. This and Theorem~\ref{t:PMPdrel} proves 
the reliability. The efficiency follows from the standard bubble function 
technique~\cite{Verfuerth96}.
\end{proof}

\subsection{(A3) quasi-orthogonality}\label{ss:PMPquasiorthogonality}

The following theorem proves quasi-orthogonality of the 
discretization~\eqref{e:PMPdP}.

\begin{theorem}[general quasi-orthogonality]\label{t:PMPquasiorthogonality}
Let $(\tri_j\mid j\in\mathbb{N})$ be some sequence of triangulations with discrete solutions 
$(p_{j},\alpha_{j})\in X_h(\tri_j)\times Y_h(\tri_j)$ to \eqref{e:PMPdP}.
Let $\ell\in\mathbb{N}$. Then,
\begin{align*}
 \sum_{j=\ell}^\infty \Big( \|p_j-p_{j-1}\|_{L^2(\Omega)}^2 
          + \left\|\Curl(\alpha_j-\alpha_{j-1})\right\|_{L^2(\Omega)}^2\Big)
    \lesssim \lambda_{\ell-1}^2 + \mu_{\ell-1}^2.
\end{align*}
\end{theorem}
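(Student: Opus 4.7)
The plan is to start from the strong form of~\eqref{e:PMPdPeq1}, namely $p_j + \Curl\alpha_j = \Pi_{X_h(\tri_j)}\varphi$ valid for every $j$, and assume that $(\tri_j)_{j\in\mathbb{N}}$ is a nested sequence of admissible refinements, as is the case for any sequence produced by Algorithm~\ref{a:PMPafem}. Subtracting two consecutive instances of this strong equation gives the fundamental identity
\[
(p_j - p_{j-1}) + \Curl(\alpha_j - \alpha_{j-1}) = \Pi_{X_h(\tri_j)}\varphi - \Pi_{X_h(\tri_{j-1})}\varphi.
\]
Expanding the squared $L^2$-norm of the left-hand side produces a cross term $2(p_j-p_{j-1},\Curl(\alpha_j-\alpha_{j-1}))$ whose four bilinear contributions, by the discrete orthogonality $p_i\perp_{L^2}\Curl Y_h(\tri_i)$ together with the nesting $Y_h(\tri_{j-1})\subseteq Y_h(\tri_j)$, reduce to the single surviving piece $-(p_{j-1},\Curl\alpha_j)$.

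The next step is to rewrite this piece by passing through the \emph{continuous} Helmholtz orthogonality. Since $p\perp_{L^2}\Curl Y$ by~\eqref{e:HelmholtzDec} and $\alpha_j-\alpha_{j-1}\in Y_h(\tri_j)\subseteq Y$, I obtain $(p,\Curl(\alpha_j-\alpha_{j-1}))=0$, and combined with $(p_{j-1},\Curl\alpha_{j-1})=0$ this gives $(p_{j-1},\Curl\alpha_j) = -(p-p_{j-1},\Curl(\alpha_j-\alpha_{j-1}))$. Cauchy–Schwarz and Young's inequality with parameter $1/2$ then absorb a half of the $\Curl$-term into the left-hand side, leading to
\[
\|p_j-p_{j-1}\|_{L^2(\Omega)}^2 + \tfrac{1}{2}\|\Curl(\alpha_j-\alpha_{j-1})\|_{L^2(\Omega)}^2 \leq \|\Pi_{X_h(\tri_j)}\varphi - \Pi_{X_h(\tri_{j-1})}\varphi\|_{L^2(\Omega)}^2 + 2\|p-p_{j-1}\|_{L^2(\Omega)}^2.
\]

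It remains to sum this inequality over $j\geq\ell$. The data series telescopes by nested $L^2$-Pythagoras, yielding $\sum_{j\geq\ell}\|\Pi_{X_h(\tri_j)}\varphi - \Pi_{X_h(\tri_{j-1})}\varphi\|_{L^2(\Omega)}^2 = \|\varphi - \Pi_{X_h(\tri_{\ell-1})}\varphi\|_{L^2(\Omega)}^2 - \lim_j\|\varphi - \Pi_{X_h(\tri_j)}\varphi\|_{L^2(\Omega)}^2 \leq \mu_{\ell-1}^2$. The hard part will be the residual series $\sum_{j\geq\ell}\|p-p_{j-1}\|_{L^2(\Omega)}^2$: the per-term reliability of Proposition~\ref{p:effrelResidualEst} only yields $\|p-p_{j-1}\|_{L^2(\Omega)}^2 \lesssim \lambda_{j-1}^2+\mu_{j-1}^2$, which is not a priori summable. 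To close the argument I would invoke the abstract machinery of \cite{CarstensenRabus}: the stability (A1) and reduction (A2) bounds from Subsection~\ref{ss:PMPstabilityreduction} together with the discrete reliability (A4) of Subsection~\ref{ss:PMPdrel} imply a linear contraction of a combined quantity of the form $\|p-p_j\|_{L^2(\Omega)}^2+\|\Curl(\alpha-\alpha_j)\|_{L^2(\Omega)}^2 + \gamma\lambda_j^2$ along the refinement sequence, and this contraction converts per-term reliability into a geometric series whose total is controlled by $\lambda_{\ell-1}^2+\mu_{\ell-1}^2$, producing the asserted bound.
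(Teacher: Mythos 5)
Your algebraic preparation is correct: the strong-form identity
\[
(p_j-p_{j-1})+\Curl(\alpha_j-\alpha_{j-1})=\Pi_{X_h(\tri_j)}\varphi-\Pi_{X_h(\tri_{j-1})}\varphi,
\]
the reduction of the cross term to $-(p_{j-1},\Curl\alpha_j)$ via the discrete orthogonality \eqref{e:PMPdPeq2} and the nesting $Y_h(\tri_{j-1})\subseteq Y_h(\tri_j)$, and the rewriting $(p_{j-1},\Curl\alpha_j)=-(p-p_{j-1},\Curl(\alpha_j-\alpha_{j-1}))$ through the continuous Helmholtz orthogonality are all valid, as is the Pythagoras/telescoping estimate for $\sum_{j\ge\ell}\|\Pi_{X_h(\tri_j)}\varphi-\Pi_{X_h(\tri_{j-1})}\varphi\|_{L^2(\Omega)}^2\le\mu_{\ell-1}^2$.

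The closure of the argument, however, is circular and cannot be repaired along these lines. The residual series $\sum_{j\ge\ell}\|p-p_{j-1}\|_{L^2(\Omega)}^2$ is controlled in your outline by invoking a contraction of a quasi-error from the abstract framework of \cite{CarstensenRabus}. But in that framework the contraction (and hence summability of the error) is a \emph{consequence} of the axioms (A1)--(A4), and (A3) is precisely the general quasi-orthogonality you are trying to prove; it is the replacement for exact Galerkin orthogonality and is indispensable in deriving the contraction. You cannot use the output of the axiom system to verify one of its inputs. A further obstruction is that Theorem~\ref{t:PMPquasiorthogonality} must hold for an arbitrary nested sequence of admissible triangulations with a constant independent of any marking parameter — this is exactly what makes it usable as an axiom — whereas any contraction-based bound would both require D\"orfler marking and carry a hidden constant that degenerates as the contraction rate tends to one.

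The paper therefore takes a genuinely different and self-contained route. Rather than estimating $\|p-p_{j-1}\|^2$ term by term, it first shows via the projection property (Lemma~\ref{l:PMPintegralmean}) and the discrete problem \eqref{e:PMPdP} that
\[
(p_{j-1},p_j-p_{j-1})_{L^2(\Omega)}=(\varphi,\Pi_{X_h(\tri_{j-1})}p_j-p_{j-1})_{L^2(\Omega)},
\qquad
(p_j,p_j-p_{j-1})_{L^2(\Omega)}=(\varphi,p_j-\Pi_{X_h(\tri_{j-1})}p_j)_{L^2(\Omega)},
\]
so that subtracting and summing over $j=\ell,\dots,M$ yields, after an index shift,
\[
\sum_{j=\ell}^{M}\|p_j-p_{j-1}\|_{L^2(\Omega)}^2
=(\varphi,p_{\ell-1}-p_M)_{L^2(\Omega)}
+2\sum_{j=\ell}^{M}(\varphi,p_j-\Pi_{X_h(\tri_{j-1})}p_j)_{L^2(\Omega)}.
\]
The middle sum is absorbed as you would expect using the orthogonality of $p_j-\Pi_{X_h(\tri_{j-1})}p_j$ to $X_h(\tri_{j-1})$ and the nested Pythagoras for $\varphi$, while the single boundary term $(\varphi,p_{\ell-1}-p_M)_{L^2(\Omega)}$ is bounded by one application of discrete reliability (Theorem~\ref{t:PMPdrel}) between $\tri_{\ell-1}$ and $\tri_M$ together with the estimate \eqref{e:proofquasiorth1}. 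The key difference from your proposal is that the paper reduces the whole series to a \emph{single} boundary term controlled by one discrete-reliability estimate, whereas your route needs summability of the full error sequence — precisely what is unavailable until quasi-orthogonality is in hand.
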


\begin{proof}
The projection property, Lemma~\ref{l:PMPintegralmean}, proves 
$\Pi_{X_h(\tri_{j-1})} p_j\in W_h(\tri_{j-1})$ with $W_h(\tri_{j-1})$ 
from~\eqref{e:PMPMh}. Hence, problem~\eqref{e:PMPdP} leads to
\begin{align*}
 (p_{j-1},p_j - p_{j-1})_{L^2(\Omega)} 
    &= (\varphi,\Pi_{X_h(\tri_{j-1})} p_j - p_{j-1})_{L^2(\Omega)},\\
 (p_j,p_j - p_{j-1})_{L^2(\Omega)} 
     &= (\varphi,p_j) - (\varphi,\Pi_{X_h(\tri_{j-1})} p_j)_{L^2(\Omega)}.
\end{align*}
The subtraction of these two equations and an index shift leads, for any 
$M\in\mathbb{N}$ with $M>\ell$, to 
\begin{equation}\label{e:PMPquasiorthogonalityProof1}
\begin{aligned}
 &\sum_{j=\ell}^M \|p_j- p_{j-1}\|_{L^2(\Omega)}^2
   = \sum_{j=\ell}^M 
    (\varphi, p_j - \Pi_{X_h(\tri_{j-1})} p_j)_{L^2(\Omega)} \\
  &\qquad\qquad\qquad\qquad\qquad\qquad
          - \sum_{j=\ell}^M (\varphi,\Pi_{X_h(\tri_{j-1})} p_j)_{L^2(\Omega)}
%    &\qquad\qquad\qquad\qquad\qquad\qquad\qquad\qquad
          + \sum_{j=\ell-1}^{M-1} (\varphi,p_{j})_{L^2(\Omega)}\\
  &\qquad\qquad\qquad
    = (\varphi,p_{\ell-1} - p_M)_{L^2(\Omega)} 
   + 2\sum_{j=\ell}^{M} 
      (\varphi, p_j - \Pi_{X_h(\tri_{j-1})} p_j)_{L^2(\Omega)} .
\end{aligned}
\end{equation}
Since $p_j - \Pi_{X_h(\tri_{j-1})} p_j\in X_h(\tri_j)$ is $L^2$-orthogonal to 
$X_h(\tri_{j-1})$, a Cauchy and a weighted Young inequality
imply
\begin{equation}\label{e:PMPquasiorthogonalityProof6}
\begin{aligned}
 &2\sum_{j=\ell}^{M} (\varphi, p_j  - \Pi_{X_h(\tri_{j-1})} p_j)_{L^2(\Omega)}\\
 &\quad= 2\sum_{j=\ell}^{M} 
   (\Pi_{X_h(\tri_j)}\varphi-\Pi_{X_h(\tri_{j-1})}\varphi, 
                     p_j - \Pi_{X_h(\tri_{j-1})} p_j)_{L^2(\Omega)}\\
 &\quad\leq 2 \sum_{j=\ell}^{M} 
     \|\Pi_{X_h(\tri_j)}\varphi-\Pi_{X_h(\tri_{j-1})}\varphi\|_{L^2(\Omega)}^2
   + \frac{1}{2} \sum_{j=\ell}^{M} 
         \|p_j - \Pi_{X_h(\tri_{j-1})} p_j\|_{L^2(\Omega)}^2.
\end{aligned}
\end{equation}
The orthogonality 
$\Pi_{X_h(\tri_j)}\varphi-\Pi_{X_h(\tri_{j-m})}\varphi\bot_{L^2(\Omega)} 
X_h(\tri_{j-m})$ for all $0\leq m\leq j$ proves
\begin{align}\label{e:PMPquasiorthogonalityProof3}
 \sum_{j=\ell}^{M}  
   \|\Pi_{X_h(\tri_j)}\varphi-\Pi_{X_h(\tri_{j-1})}\varphi\|_{L^2(\Omega)}^2 
   = \|\Pi_{X_h(\tri_M)} \varphi - 
                         \Pi_{X_h(\tri_{\ell-1})}\varphi\|_{L^2(\Omega)}^2.
\end{align}
The definition of $\mu_\ell$ yields 
\begin{equation}\label{e:PMPquasiorthogonalityProof4}
\begin{aligned}
 \|\Pi_{X_h(\tri_M)} \varphi - \Pi_{X_h(\tri_{\ell-1})}\varphi\|_{L^2(\Omega)}
 & = \|\Pi_{X_h(\tri_M)} (\varphi - \Pi_{X_h(\tri_{\ell-1})}\varphi)\|_{L^2(\Omega)}
             \\
  & \leq \mu_{\ell-1} .
\end{aligned}
\end{equation}
The combination of 
\eqref{e:PMPquasiorthogonalityProof1}--\eqref{e:PMPquasiorthogonalityProof4} and 
$\|p_j-\Pi_{X_h(\tri_{j-1})}p_j\|_{L^2(\Omega)}\leq\|p_j-p_{j-1}\|_{
L^2(\Omega)}$ leads to 
\begin{equation}\label{e:PMPquasiorthogonalityProof2}
\begin{aligned}
 \frac{1}{2}\sum_{j=\ell}^M \|p_j- p_{j-1}\|_{L^2(\Omega)}^2
   &\leq 2\mu_{\ell-1}^2
     + (\varphi,p_{\ell-1} - p_M)_{L^2(\Omega)} .
\end{aligned}
\end{equation}
The combination of the arguments of 
\eqref{e:PMPdrelproofNCerror}--\eqref{e:proofdrel2}
proves
\begin{align}\label{e:proofquasiorth1}
 (\Curl(\alpha_{M}-\alpha_{\ell-1}),p_{\ell-1})_{L^2(\Omega)}
  \lesssim \lambda_{\ell-1} 
    \left\|\Curl(\alpha_{M}-\alpha_{\ell-1})\right\|_{L^2(\Omega)}
\end{align}
This, the discrete problem \eqref{e:PMPdP}, and 
the discrete reliability  
$\left\|\Curl(\alpha_{M}-\alpha_{\ell-1})\right\|_{L^2(\Omega)}\lesssim 
\lambda_{\ell-1}+\mu_{\ell-1}$ from Theorem~\ref{t:PMPdrel}
lead to 
% The discrete reliability of Theorem~\ref{t:PMPdrel} proves 
% with $\mathrm{Res_2}$
% from the proof of Theorem~\ref{t:PMPefficiencyreliability}
\begin{equation*}%\label{e:PMPquasiorthogonalityProof7}
\begin{aligned}
 &(p_{\ell-1}- p_{M},\Pi_{X_h(\tri_{\ell-1})}\varphi)_{L^2(\Omega)}
  = (p_{\ell-1}-p_{M},p_{\ell-1} + \Curl\alpha_{\ell-1})_{L^2(\Omega)}\\
 &\qquad\qquad = (p_{\ell-1}-p_{M},p_{\ell-1})_{L^2(\Omega)}
  = (\Curl(\alpha_{M}-\alpha_{\ell-1}),p_{\ell-1})_{L^2(\Omega)}\\
 & \qquad\qquad\lesssim \lambda_{\ell-1} 
    \left\|\Curl(\alpha_{M}-\alpha_{\ell-1})\right\|_{L^2(\Omega)}
  \lesssim (\lambda_{\ell-1}+\mu_{\ell-1})^2.
\end{aligned}
\end{equation*}
This and a further application of Theorem~\ref{t:PMPdrel} 
leads to 
\begin{equation}\label{e:PMPquasiorthogonalityProof8}
\begin{aligned}
 &(\varphi,p_{\ell-1}-p_{M})_{L^2(\Omega)} \\
 &\qquad\;
  = (\varphi-\Pi_{X_h(\tri_{\ell-1})}\varphi,p_{\ell-1}-p_{M})_{L^2(\Omega)} + 
   (p_{\ell-1}-p_{M},\Pi_{X_h(\tri_{\ell-1})}\varphi)_{L^2(\Omega)}\\
 &\qquad\;
  \lesssim \|\varphi-\Pi_{X_h(\tri_{\ell-1})}\varphi\|_{L^2(\Omega)}\;
     \|p_{\ell-1}-p_{M}\|_{L^2(\Omega)} 
    + (\lambda_{\ell-1}+\mu_{\ell-1})_{L^2(\Omega)}^2\\
  &\qquad\;
   \lesssim (\lambda_{\ell-1}+\mu_{\ell-1})^2.
\end{aligned}
\end{equation}
The combination of \eqref{e:PMPquasiorthogonalityProof2} with 
\eqref{e:PMPquasiorthogonalityProof8} implies
\begin{align}\label{e:PMPquasiorthogonalityProof5}
 \sum_{j=\ell}^M \|p_j- p_{j-1}\|_{L^2(\Omega)}^2
    \lesssim \lambda_{\ell-1}^2+\mu_{\ell-1}^2.
\end{align}
The Young inequality,
the triangle inequality, and $\Curl\alpha_j = \Pi_{X_h(\tri_j)} \varphi - p_j$ 
imply
\begin{align*}
 &\sum_{j=\ell}^M
   \left\|\Curl(\alpha_j - \alpha_{j-1})\right\|_{L^2(\Omega)}^2 \\
 &\qquad\qquad 
   \leq 2 \sum_{j=\ell}^M \|p_j - p_{j-1}\|_{L^2(\Omega)}^2
   + 2 \sum_{j=\ell}^M 
      \|\Pi_{X_h(\tri_j)}\varphi - 
                               \Pi_{X_h(\tri_{j-1})}\varphi\|_{L^2(\Omega)}^2.
\end{align*}
Since $M>\ell$ is arbitrary,
the combination with \eqref{e:PMPquasiorthogonalityProof3}, 
\eqref{e:PMPquasiorthogonalityProof4}, and \eqref{e:PMPquasiorthogonalityProof5}
yields the assertion.
\end{proof}

\subsection{(B) data approximation}\label{ss:PMPaxiomB}

The following theorem together with Assumption~\ref{as:PMPB1} form the axiom (B) 
from~\cite{CarstensenRabus}.

\begin{theorem}[(B2) quasimonotonicity and (SA) sub-additivity]
Any admissible refinement $\tri_\star$ of $\tri$ satisfies 
\begin{align*}
  \mu^2(\tri_\star) \leq \mu^2(\tri)
  \quad\text{and}\quad 
   \sum_{\substack{T\in\tri_\star\\
             T\subseteq K}}
   \mu^2(T) \leq \mu^2(K)
 \qquad\text{for all }K\in \tri.
\end{align*}
\end{theorem}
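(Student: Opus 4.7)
Both assertions follow from a single observation: the operator $\Pi_k$ is the $L^2$-orthogonal projection onto the piecewise polynomial space $P_k(\cdot;\R^2)$, which is \emph{nested under refinement}. Since $\tri_\star$ is an admissible refinement of $\tri$, every $T\in\tri_\star$ is contained in some $K\in\tri$, and consequently $P_k(\tri;\R^2)\subseteq P_k(\tri_\star;\R^2)$. The plan is therefore to exploit the best-approximation characterization of the $L^2$-projection together with its elementwise locality.

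First I would establish sub-additivity (SA). Fix $K\in\tri$ and let $\tri_\star(K):=\{T\in\tri_\star\mid T\subseteq K\}$, which is a regular sub-partition of $K$. Because the $L^2$-projection acts locally on each triangle, the restriction $(\Pi_k^{\tri_\star}\varphi)\vert_K$ equals the $L^2(K)$-orthogonal projection of $\varphi\vert_K$ onto $P_k(\tri_\star(K);\R^2)$. Since $P_k(K;\R^2)\subseteq P_k(\tri_\star(K);\R^2)$, the best-approximation property yields
\begin{align*}
 \sum_{T\in\tri_\star,\,T\subseteq K}\mu^2(T)
  &=\|\varphi-\Pi_k^{\tri_\star}\varphi\|_{L^2(K)}^2
  =\min_{\tau\in P_k(\tri_\star(K);\R^2)}\|\varphi-\tau\|_{L^2(K)}^2\\
  &\leq\min_{\tau\in P_k(K;\R^2)}\|\varphi-\tau\|_{L^2(K)}^2
  =\|\varphi-\Pi_k^{\tri}\varphi\|_{L^2(K)}^2=\mu^2(K),
\end{align*}
which is exactly (SA).

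Quasimonotonicity (B2) then follows immediately by summation: since $\tri=\{K\in\tri\}$ partitions $\Omega$ into the sets $\{T\in\tri_\star\mid T\subseteq K\}$, summing the inequality above over all $K\in\tri$ gives
\begin{align*}
 \mu^2(\tri_\star)=\sum_{K\in\tri}\sum_{\substack{T\in\tri_\star\\T\subseteq K}}\mu^2(T)
  \leq \sum_{K\in\tri}\mu^2(K)=\mu^2(\tri).
\end{align*}
Alternatively, (B2) can be seen directly from $X_h(\tri)\subseteq X_h(\tri_\star)$ and the fact that projection onto a larger closed subspace produces no larger residual.

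There is no genuine obstacle here; both statements are purely a consequence of nestedness of the discrete spaces under refinement and the elementwise nature of the $L^2$ projection. The only point to verify carefully is that the $L^2$-projection onto $P_k(\tri_\star;\R^2)$ indeed decouples over the triangles of $\tri_\star$, which is immediate because these piecewise polynomial spaces carry no interelement continuity constraint.
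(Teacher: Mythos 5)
Your argument is correct and follows the same (and essentially only) route the paper intends: the paper's proof simply reads ``This follows directly from the definition of $\mu$,'' and your write-up supplies precisely the details that phrase is shorthand for — elementwise locality of $\Pi_k$ (no inter-element continuity in $P_k(\tri;\R^2)$) plus nestedness of the piecewise-polynomial spaces under refinement and the best-approximation property of the $L^2$ projection.
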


\begin{proof}
This follows directly from the definition of $\mu$.
\end{proof}

\section{Extension to 3D}\label{s:PMP3D}

This section is devoted to the generalization to 3D. 
Subsection~\ref{ss:PMP3Ddef} defines the novel discretization and comments on 
basic properties, while Subsection~\ref{ss:PMP3Dafem} is devoted to optimal 
convergence rates for the adaptive algorithm.

\subsection{Weak formulation and discretization}\label{ss:PMP3Ddef}

For this section, let $\Omega\subseteq\R^3$ be a 
simply connected, bounded, polygonal Lipschitz domain in $\R^3$.
For the sake of simplicity, we also assume that $\partial\Omega$ is connected
(i.e., $\Omega$ is contractible).
The Curl operator acts on a sufficiently smooth vector field 
$\beta:\Omega\to\R^3$ as $\Curl \beta = \nabla \wedge \beta$ 
with the cross product or vector product $\wedge$.
Let $H(\Curl,\Omega)$ denote the space of all $\beta\in L^2(\Omega;\R^3)$  
with $\Curl \beta\in L^2(\Omega;\R^3)$ for the weak $\Curl$, i.e., 
\begin{align*}
 \int_\Omega v\cdot\Curl\beta \,dx
  = \int_\Omega \beta\cdot\Curl v\,dx
 \qquad\text{for all }v\in C^\infty_c(\Omega;\R^3).
\end{align*}
In contrast to the two-dimensional case, $H(\Curl,\Omega)\neq 
H^1(\Omega;\R^3)$. 
The Helmholtz decomposition in 3D reads 
\begin{align}\label{e:PMP3DHelmholtzdec}
 L^2(\Omega;\R^3) 
   = \nabla H^1_0(\Omega) \oplus \Curl H(\Curl,\Omega)
\end{align}
and the sum is $L^2$ orthogonal.
It is a consequence of the identity 
\begin{align*}
 \{r\in H(\ddiv,\Omega)\mid \ddiv r=0\}
  = \Curl H(\Curl,\Omega)
\end{align*}
in the De Rham complex \cite{BoffiBrezziFortin2013}.

Let $\varphi\in H(\ddiv,\Omega)$ with $-\ddiv\varphi=f$. Then
the Poisson problem \eqref{e:PMPstrongform} is equivalent to the problem: 
Find $(p,\alpha)\in L^2(\Omega;\R^3)\times H(\Curl,\Omega)$ with 
\begin{equation}\label{e:PMP3Dsemiproblem}
\begin{aligned}
 (p,q)_{L^2(\Omega)} + (q,\Curl \alpha)_{L^2(\Omega)} &= 
(\varphi,q)_{L^2(\Omega)}
 &&\text{ for all }q\in L^2(\Omega;\mathbb{R}^3),\\
 (p,\Curl \beta)_{L^2(\Omega)} &= 0 
 &&\text{ for all }\beta\in H(\Curl,\Omega).
\end{aligned}
\end{equation}
In contrast to the two-dimensional case, the operator 
$\Curl:H(\Curl,\Omega)\to L^2(\Omega;\R^3)$ has a non-trivial kernel. 
Classical results \cite{Rudin1976}
characterize this kernel as $\nabla H^1(\Omega)$.
To enforce uniqueness, we can reformulate \eqref{e:PMP3Dsemiproblem} as follows.
Seek $(p,\alpha,w)\in 
L^2(\Omega;\R^3)\times H(\Curl,\Omega) \times (H^1(\Omega)\cap L^2_0(\Omega))$ with 
\begin{equation*}
\begin{aligned}
 (p,q)_{L^2(\Omega)} + (q,\Curl \alpha)_{L^2(\Omega)}
  &= (\varphi,q)_{L^2(\Omega)}
 &&\text{ for all }q\in L^2(\Omega;\mathbb{R}^3),\\
 (p,\Curl \beta)_{L^2(\Omega)} + (\beta,\nabla w)_{L^2(\Omega)} 
  &= 0 
 &&\text{ for all }\beta\in H(\Curl,\Omega),\\
 (\alpha,\nabla v)_{L^2(\Omega)} &= 0 
 && \text{ for all }v\in (H^1(\Omega)\cap L^2_0(\Omega)).
\end{aligned}
\end{equation*}
Note that $\{\beta\in H(\Curl,\Omega)\mid \Curl\beta=0\}
=\nabla H^1(\Omega)$ implies $w=0$.

Standard finite element spaces to discretize $H(\Curl,\Omega)$ in 3D
are the N\'ed\'elec finite element spaces \cite{Nedelec1980, Nedelec1986} 
(also called edge elements)
which are known from the context 
of Maxwell's equations.
Let $\tri$ be a regular triangulation of $\Omega$ in tetrahedra in the sense 
of~\cite{Ciarlet1978}.
The spaces of first kind N\'ed\'elec finite elements read
\begin{align*}
 Y_{\mathrm{N},k}(T) &:= P_k(T;\R^3) + (x \wedge P_k(T;\R^3)),\\
 Y_{\mathrm{N},k}(\tri)&:=\{\beta_h\in H(\Curl,\Omega)\mid 
      \forall T\in\tri:\; \beta_h\vert_T\in Y_{\mathrm{N},k}(T)\}.
\end{align*}
Let $X_h(\tri):=P_k(\tri;\R^3)$.
Since $\Curl Y_{\mathrm{N},k}(\tri)\subseteq X_h(\tri)$, a generalization of 
\eqref{e:PMPdP} to 3D seeks 
$(p_h,\alpha_h)\in X_h(\tri)\times Y_{\mathrm{N},k}(\tri)$
with 
\begin{equation}\label{e:PMP3DdProb}
\begin{aligned}
  (p_h,q_h)_{L^2(\Omega)} + (q_h,\Curl \alpha_h)_{L^2(\Omega)} &= 
(\varphi,q_h)_{L^2(\Omega)}
 &&\text{ for all }q_h\in X_h(\tri),\\
 (p_h,\Curl \beta_h)_{L^2(\Omega)} &= 0 
 &&\text{ for all }\beta_h\in Y_{\mathrm{N},k}(\tri).
\end{aligned}
\end{equation}
The discrete exact sequence \cite{BoffiBrezziFortin2013}
implies that the elements in $Y_{\mathrm{N},k}(\tri)$ with 
vanishing Curl are exactly the gradients of functions in 
$U_h(\tri):=P_{k+1}(\tri)\cap H^1(\Omega)\cap L^2_0(\Omega)$. Therefore, 
the uniqueness in~\eqref{e:PMP3DdProb} can be obtained in the following 
formulation. Seek 
$(p_h,\alpha_h,w_h)\in X_h(\tri)\times Y_{\mathrm{N},k}(\tri)\times U_h(\tri)$
with 
\begin{equation}\label{e:PMP3DdProbfull}
\begin{aligned}
  (p_h,q_h)_{L^2(\Omega)} + (q_h,\Curl \alpha_h)_{L^2(\Omega)} &= 
(\varphi,q_h)_{L^2(\Omega)}
 &&\text{ for all }q_h\in X_h(\tri),\\
 (p_h,\Curl \beta_h)_{L^2(\Omega)} + (\beta_h,\nabla w_h)_{L^2(\Omega)} &= 0 
 &&\text{ for all }\beta_h\in Y_{\mathrm{N},k}(\tri),\\
  (\alpha_h,\nabla v_h)_{L^2(\Omega)} &= 0 
 &&\text{ for all } v_h\in U_h(\tri).
\end{aligned}
\end{equation}
Note that $\nabla U_h(\tri)$ is the kernel of 
$\Curl:Y_{\mathrm{N},k}(\tri)\to P_k(\tri;\R^3)$ and so \eqref{e:PMP3DdProbfull}
implies $w_h=0$.
This variable is introduced in order that \eqref{e:PMP3DdProbfull} has
the form of a standard mixed system.
The discrete Helmholtz decomposition of 
\cite[Lemma~5.4]{AlonsoRodriguezHiptmairValli2004} proves that for the lowest 
order discretization $k=0$, $p_h$ is a 
Crouzeix-Raviart function and so \eqref{e:PMP3DdProbfull} can be seen as a 
generalization of the non-conforming Crouzeix-Raviart FEM to 
higher polynomial degrees.

The inf-sup condition follows from $\nabla U_h(\tri)
\subseteq Y_{\mathrm{N},k}(\tri)$ and $\Curl Y_{\mathrm{N},k}(\tri)\subseteq 
X_h(\tri)$. This and the conformity of the method lead to the 
best-approximation result 
\begin{align*}
   & \|p-p_h\|_{L^2(\Omega)} 
    + \left\|\Curl(\alpha-\alpha_h)\right\|_{L^2(\Omega)}
    + \|\nabla(w-w_h)\|_{L^2(\Omega)}\\
   &\qquad\qquad
   \lesssim \Big(\min_{q_h\in X_h(\tri)} 
  \|p-q_h\|_{L^2(\Omega)}
      +\min_{\beta_h\in Y_{\mathrm{N},k}(\tri)} 
	\left\|\Curl (\alpha-\beta_h)\right\|_{L^2(\Omega)}\\
   &\qquad\qquad\qquad\qquad\qquad\qquad\qquad\qquad
     + \min_{s_h\in U_h(\tri)}
         \|\nabla(w-s_h)\|_{L^2(\Omega)}\Big).
\end{align*}
Since $w=w_h=0$, this is equivalent to 
\begin{align*}
   & \|p-p_h\|_{L^2(\Omega)} 
    + \left\|\Curl(\alpha-\alpha_h)\right\|_{L^2(\Omega)}\\
   &\qquad\qquad
   \lesssim \Big(\min_{q_h\in X_h(\tri)} 
  \|p-q_h\|_{L^2(\Omega)}
      +\min_{\beta_h\in Y_{\mathrm{N},k}(\tri)} 
	\left\|\Curl (\alpha-\beta_h)\right\|_{L^2(\Omega)}\Big).
\end{align*}
The following proposition states a projection property similar to 
Lemma~\ref{l:PMPintegralmean} for the two-dimensional case.
To this end, define 
\begin{align*}
 Z_h(\tri)&:=\{ \beta_h\in Y_{\mathrm{N},k}(\tri)\mid
   \forall v_h\in U_h(\tri):\;
   (\beta_h,\nabla v_h)_{L^2(\Omega)}=0)\},\\
 W_h(\tri)&:= \{q_h\in X_h(\tri) \mid 
  \forall \beta_h\in Z_h(\tri):\;
   (q_h,\Curl\beta_h)_{L^2(\Omega)} =0\}.
\end{align*}
Since $\nabla U_h(\tri)$ is the kernel of 
$\Curl:Y_{\mathrm{N},k}(\tri)\to X_h(\tri)$, it holds 
\begin{align*}
  \Curl Y_{\mathrm{N},k}(\tri) = \Curl Z_h(\tri).
\end{align*}
This implies 
\begin{align*}
 W_h(\tri)&=\{q_h\in X_h(\tri) \mid 
  \forall \beta_h\in Y_{\mathrm{N},k}(\tri):\;(q_h,\Curl\beta_h)_{L^2(\Omega)} =0\}.
\end{align*}

\begin{lemma}[projection property]\label{l:PMP3Dintegralmean}
Let $q\in L^2(\Omega;\R^3)$ with $(q,\Curl\beta)_{L^2(\Omega)}=0$
for all $\beta\in H(\Curl,\Omega)$
(that means that $q$ is a gradient of a $H^1_0(\Omega)$ function). 
Then $\Pi_{X_h(\tri)} q\in W_h(\tri)$.
If $\tri_\star$ is an admissible refinement of $\tri$, 
then $\Pi_{X_h(\tri)}W_h(\tri_\star)\subseteq W_h(\tri) $.
\end{lemma}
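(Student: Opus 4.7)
The plan is to mimic the two-dimensional argument from Lemma~\ref{l:PMPintegralmean}, but to exploit the reformulation
\[
 W_h(\tri)=\{q_h\in X_h(\tri)\mid (q_h,\Curl\beta_h)_{L^2(\Omega)}=0\text{ for all }\beta_h\in Y_{\mathrm{N},k}(\tri)\}
\]
that was derived immediately above the statement of the lemma. With this characterization, both parts reduce to a single computation of the form $(\Pi_{X_h(\tri)}q,\Curl\beta_h)_{L^2(\Omega)}=(q,\Curl\beta_h)_{L^2(\Omega)}$, provided that the test function $\Curl\beta_h$ lies in $X_h(\tri)$.

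For the first assertion, I would pick an arbitrary $\beta_h\in Y_{\mathrm{N},k}(\tri)$. By construction $\Curl\beta_h\in P_k(\tri;\R^3)=X_h(\tri)$, so the defining property of the $L^2$ projection gives $(\Pi_{X_h(\tri)}q,\Curl\beta_h)_{L^2(\Omega)}=(q,\Curl\beta_h)_{L^2(\Omega)}$. Since $\beta_h\in H(\Curl,\Omega)$ is an admissible test function in the hypothesis on $q$, this scalar product vanishes. Hence $\Pi_{X_h(\tri)}q$ annihilates $\Curl Y_{\mathrm{N},k}(\tri)$ and belongs to $W_h(\tri)$.

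For the second assertion, let $q_\star\in W_h(\tri_\star)$ and again fix $\beta_h\in Y_{\mathrm{N},k}(\tri)$. The key structural fact I would invoke is the nestedness of the first-kind N\'ed\'elec spaces under the newest-vertex bisection refinements considered in $\mathbb{T}$, namely $Y_{\mathrm{N},k}(\tri)\subseteq Y_{\mathrm{N},k}(\tri_\star)$; this is standard because affine pullbacks of a polynomial of the required type to each child tetrahedron still lie in the local N\'ed\'elec space. Combined with $\Curl\beta_h\in X_h(\tri)\subseteq X_h(\tri_\star)$, the projection identity yields
\[
 (\Pi_{X_h(\tri)}q_\star,\Curl\beta_h)_{L^2(\Omega)}
  =(q_\star,\Curl\beta_h)_{L^2(\Omega)}=0,
\]
since $\beta_h$ is a legitimate test function in the definition of $W_h(\tri_\star)$. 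This proves $\Pi_{X_h(\tri)}q_\star\in W_h(\tri)$.

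The only delicate point, and the place I would pause to double-check, is the nestedness $Y_{\mathrm{N},k}(\tri)\subseteq Y_{\mathrm{N},k}(\tri_\star)$ together with the tangential-continuity constraint encoded in $H(\Curl,\Omega)$; this is well known but worth stating explicitly. Note that, in contrast to the two-dimensional case, I would not attempt to prove the reverse inclusion $W_h(\tri)\subseteq\Pi_{X_h(\tri)}\nabla H^1_0(\Omega)$, as the lemma only asserts the forward inclusion -- and the natural dimension/uniqueness argument used in 2D is obstructed in 3D by the non-trivial kernel $\nabla H^1(\Omega)$ of the Curl on $H(\Curl,\Omega)$, which is exactly why the auxiliary variable $w_h$ had to be introduced in \eqref{e:PMP3DdProbfull}.
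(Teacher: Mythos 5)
Your proof is correct and is essentially what the paper does: the paper's own proof is a one-liner citing exactly the two structural facts you isolate ($\Curl Y_{\mathrm{N},k}(\tri)\subseteq X_h(\tri)$, which makes $\Curl\beta_h$ an admissible test function for the $L^2$ projection, and $Y_{\mathrm{N},k}(\tri)\subseteq H(\Curl,\Omega)$, which makes $\beta_h$ an admissible test function for the hypothesis on $q$) and then refers back to the argument of Lemma~\ref{l:PMPintegralmean}, where the discrete case is handled implicitly by the same nestedness $Y_{\mathrm{N},k}(\tri)\subseteq Y_{\mathrm{N},k}(\tri_\star)$ that you state explicitly. Your closing remark correctly identifies why the lemma in 3D only asserts inclusions rather than the equalities of the 2D Lemma~\ref{l:PMPintegralmean}.
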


\begin{proof}
Since $\Curl Y_{\mathrm{N},k}(\tri)\subseteq X_h(\tri)$ and 
$Y_{\mathrm{N},k}(\tri)\subseteq 
H(\Curl,\Omega)$, the assertion follows with the arguments in the proof of 
Lemma~\ref{l:PMPintegralmean}.
\end{proof}

\subsection{Adaptive algorithm}\label{ss:PMP3Dafem}

This subsection outlines the proof of optimal convergence rates for
Algorithm~\ref{a:PMPafem} in 3D driven by the error estimators 
$\lambda$ and $\mu$ defined by the local contributions
\begin{align*}
 \lambda^2(\tri_\ell,T)&:=\|h_\tri\Curl_\NC p_h\|_{L^2(T)}^2 
   + h_T\sum_{E\in\edges(T)} \|[p_h\wedge \nu_E]_E\|_{L^2(E)}^2,\\
 \mu^2(T)&:=\|\varphi-\Pi_{X_h(\tri)}\varphi\|_{L^2(T)}^2
\end{align*}
and~\eqref{e:PMPdefmulambda2}. Here, $\edges(T)$ denotes the faces of a 
tetrahedron $T\in\tri$ and $h_\tri\in P_0(\tri)$ denotes the piecewise
constant mesh-size function defined by 
$h_\tri\vert_T:=h_T:=\mathrm{meas}_3(T)^{1/3}$. 
The refinement of triangulations in 
Algorithm~\ref{a:PMPafem} is done by newest-vertex 
bisection~\cite{Stevenson2008}.
Let $\mathbb{T}(N)$ denote the space of admissible triangulations with at most 
$N$ tetrahedra more than $\tri_0$.
As in Subsection~\ref{ss:PMPafemdef}, define the seminorm
\begin{align*}
 \left\vert(p,\alpha,\varphi)\right\vert_{\mathcal{A}_s}
  :=\sup_{N\in\mathbb{N}_0} N^s & \inf_{\tri\in\mathbb{T}(N)}
      \Big( \|p-\Pi_{X_h(\tri)} p \|_{L^2(\Omega)} \\ 
    &    + \inf_{\beta_\tri\in Y_{\mathrm{N},k}(\tri)} 
   \left\|\Curl(\alpha-\beta_\tri)\right\|_{L^2(\Omega)}
           + \|\varphi - \Pi_{X_h(\tri)} \varphi\|_{L^2(\Omega)}\Big).
\end{align*}
Assume that Assumption~\ref{as:PMPB1} holds.
The following theorem states optimal convergence rates for 
Algorithm~\ref{a:PMPafem} for 3D.

\begin{theorem}[optimal convergence rates of AFEM for 3D]
Let $s>0$.
For $0<\rho_B<1$ and sufficiently small $0<\kappa$ and $0<\theta <1$, 
Algorithm~\ref{a:PMPafem}
computes sequences of triangulations $(\tri_\ell)_{\ell\in\mathbb{N}}$ 
and discrete solutions $(p_\ell,\alpha_\ell)_{\ell\in\mathbb{N}}$ 
for the right-hand side $\varphi$
of optimal rate of convergence in the sense that 
\begin{align*}
 (\mathrm{card}(\tri_\ell) - \mathrm{card}(\tri_0))^s 
  \Big(\|p-p_\ell\|_{L^2(\Omega)} + 
 \left\|\Curl(\alpha-\alpha_\ell)\right\|_{L^2(\Omega)}\Big)
   \lesssim \left| (p,\alpha,\varphi)\right|_{\mathcal{A}_s}.
\end{align*}
\end{theorem}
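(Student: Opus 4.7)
The plan is to verify the axioms (A1)--(A4), (B2), and (SA) of the abstract optimality framework from \cite{CarstensenRabus} in the 3D setting, so that under Assumption~\ref{as:PMPB1} the theorem follows by exactly the same deduction as in Theorem~\ref{t:PMPoptimalafem}. Axioms (B2) and (SA) are immediate from the definition of $\mu$, since $\Pi_{X_h(\tri_\star)}$ is an $L^2$ projection onto a piecewise polynomial space and the bound is $T$-local. Axioms (A1) stability and (A2) reduction carry over from Subsection~\ref{ss:PMPstabilityreduction} verbatim: one simply replaces edge jumps $[p_h]_E\cdot\tau_E$ by face jumps $[p_h\wedge\nu_E]_E$, uses the 3D trace inequality on tetrahedra, and exploits $h_{\tri_\star}\vert_T\le h_\tri\vert_T/2^{1/3}$ on refined tetrahedra.

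The serious work is in (A4) discrete reliability. The plan is to mimic the proof of Theorem~\ref{t:PMPdrel}. Given $p_\tri-p_{\tri_\star}\in X_h(\tri_\star)$, one writes the discrete Helmholtz-type splitting $p_\tri-p_{\tri_\star}=\sigma_{\tri_\star}+\Curl r_{\tri_\star}$ with $\sigma_{\tri_\star}\in W_h(\tri_\star)$ and $r_{\tri_\star}\in Z_h(\tri_\star)$; its existence follows because $W_h(\tri_\star)^{\perp_{L^2}}\cap X_h(\tri_\star)=\Curl Z_h(\tri_\star)$ (a consequence of $\Curl Y_{\mathrm{N},k}(\tri_\star)=\Curl Z_h(\tri_\star)$ and a dimension count via the discrete exact sequence). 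The $\sigma_{\tri_\star}$ contribution is treated verbatim as in 2D using the projection property of Lemma~\ref{l:PMP3Dintegralmean} and produces the $\mu^2(\tri,\tri\setminus\tri_\star)$ term. The $\Curl r_{\tri_\star}$ contribution requires a quasi-interpolant $r_\tri\in Z_h(\tri)$ of $r_{\tri_\star}$ that preserves all edge and face moments on $\edges(\tri)\cap\edges(\tri_\star)$ and satisfies
\begin{equation*}
\|h_\tri^{-1}(r_{\tri_\star}-r_\tri)\|_{L^2(\Omega)}+\|\Curl(r_{\tri_\star}-r_\tri)\|_{L^2(\Omega)}\lesssim \|\Curl r_{\tri_\star}\|_{L^2(\Omega)}.
\end{equation*}
Such a commuting N\'ed\'elec quasi-interpolation is available in the literature (Sch\"oberl, Ern--Guermond, Falk--Winther). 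With this tool, piecewise integration by parts in 3D
\begin{equation*}
(p_\tri,\Curl(r_{\tri_\star}-r_\tri))_{L^2(\Omega)}=-(\Curl_\NC p_\tri,r_{\tri_\star}-r_\tri)_{L^2(\Omega)}+\sum_{F\in\edges(\tri)}\int_F[p_\tri\wedge\nu_F]_F\cdot(r_{\tri_\star}-r_\tri)\,ds
\end{equation*}
reproduces the $\lambda^2(\tri,\tri\setminus\tri_\star)$ bound exactly as in 2D.

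Quasi-orthogonality (A3) is then handled by transferring Theorem~\ref{t:PMPquasiorthogonality} line by line: the projection property Lemma~\ref{l:PMP3Dintegralmean} yields $\Pi_{X_h(\tri_{j-1})}p_j\in W_h(\tri_{j-1})$, so that the telescoping manipulation using \eqref{e:PMP3DdProb} produces the same identity \eqref{e:PMPquasiorthogonalityProof1}, and the crucial bound corresponding to \eqref{e:proofquasiorth1} is re-derived by the same integration by parts and N\'ed\'elec quasi-interpolation used for (A4). The Curl-error contribution is then recovered from $\Curl\alpha_j=\Pi_{X_h(\tri_j)}\varphi-p_j$ together with the telescoping identity for the $L^2$ projections of $\varphi$.

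The main obstacle is thus constructing (or locating in the literature) the commuting N\'ed\'elec quasi-interpolant with the DoF-preserving property on unrefined faces and edges; this plays the role of the Scott--Zhang interpolant in the 2D analysis. Beyond this single analytic ingredient, the 3D proof is a mechanical translation of Sections~\ref{s:PMPafem} with $\tau_E$-jumps replaced by $\nu_F$-wedge jumps and $Y_h(\tri)$ replaced by $Z_h(\tri)$; reliability then follows from (A4) plus convergence of the discrete solutions exactly as in Proposition~\ref{p:effrelResidualEst}.
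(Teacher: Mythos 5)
Your high-level plan — verify (A1)--(A4), (B2), (SA) and then invoke the abstract framework of \cite{CarstensenRabus}, exactly as for Theorem~\ref{t:PMPoptimalafem} — matches the paper's strategy, and (A1)/(A2)/(B2)/(SA) really do transfer mechanically. The gap is in the key technical ingredient for (A3) and (A4). You propose a quasi-interpolant $r_\tri\in Z_h(\tri)$ of $r_{\tri_\star}\in Z_h(\tri_\star)$ satisfying
\begin{equation*}
\|h_\tri^{-1}(r_{\tri_\star}-r_\tri)\|_{L^2(\Omega)}
+\|\Curl(r_{\tri_\star}-r_\tri)\|_{L^2(\Omega)}
\lesssim \|\Curl r_{\tri_\star}\|_{L^2(\Omega)},
\end{equation*}
and assert that this can be found in the literature (Sch\"oberl, Ern--Guermond, Falk--Winther). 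That estimate is too strong: $r_{\tri_\star}$ is a N\'ed\'elec function, so it lies only in $H(\Curl,\Omega)$, not in $H^1(\Omega;\R^3)$; the commuting quasi-interpolation operators you cite yield $H(\Curl)$-stability and approximation estimates for $H^1$ vector fields, but they do not give the locally $h^{-1}$-weighted $L^2$ error bound in terms of $\|\Curl r_{\tri_\star}\|$ alone. The obstruction is the gradient part of $r_{\tri_\star}$: the membership $r_{\tri_\star}\in Z_h(\tri_\star)$ eliminates discrete gradients $\nabla U_h(\tri_\star)$ but not continuous ones, and a Bramble--Hilbert-type argument for the interpolation error would require local $H^1$ regularity of $r_{\tri_\star}$, which fails across faces.

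The paper circumvents exactly this by Theorem~\ref{t:PMP3Dquasiinterpol} (a consequence of a regular decomposition built from the quasi-interpolation of~\cite{Schoeberl2008} combined with the projection operator of~\cite{ZhongChenShuWittumXu2012}): one does not bound the interpolation error $r_{\tri_\star}-r_\tri$ itself, but decomposes it as $r_{\tri_\star}-r_\tri=\nabla\rho+\Phi$ with $\rho\in H^1(\Omega)$, $\Phi\in H^1(\Omega;\R^3)$, localization on $\tri\setminus\mathcal{R}(\tri,\tri_\star)$, and only the estimate $\|h_\tri^{-1}\Phi\|_{L^2(\Omega)}+\|\nabla\Phi\|_{L^2(\Omega)}\lesssim\|\Curl r_{\tri_\star}\|_{L^2(\Omega)}$. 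Since $\Curl\nabla\rho=0$, one then has $(p_\tri,\Curl(r_{\tri_\star}-r_\tri))_{L^2(\Omega)}=(p_\tri,\Curl\Phi)_{L^2(\Omega)}$, and the piecewise integration by parts plus the trace inequality are applied to the genuinely $H^1$ function $\Phi$, not to the N\'ed\'elec difference. The same regular decomposition is also what makes the bound~\eqref{e:proofquasiorth1} go through in the quasi-orthogonality proof (after first replacing $\alpha_{\ell-1}$ by some $\gamma_M\in Z_h(\tri_M)$ with $\Curl\gamma_M=\Curl\alpha_{\ell-1}$, a step your sketch also elides). In short: the problem is not a "mechanical translation" — the gradient part of the N\'ed\'elec interpolation error must be explicitly split off, and your proposed lemma, which bundles it into a single $h^{-1}$-weighted estimate, is not available and in fact not true.
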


The proof follows as in Section~\ref{s:PMPafem} from (A1)--(A4) and (B) 
from~\cite{CarstensenRabus} and the efficiency of $\lambda$ and $\mu$. 
The proof of efficiency follows with the standard bubble-function 
technique~\cite{Verfuerth96}.
The proofs of the axioms (A1)--(A4) and (B) are outlined in the 
following.

The axioms (A1) stability and (A2) reduction follow as in 
Subsection~\ref{ss:PMPstabilityreduction} with triangle inequalities, inverse 
inequalities, a trace inequality similar to \cite[p.~282]{BrennerScott08}, and 
the mesh-size reduction property $h_{\tri_\star}^3\vert_T\leq 
h_\tri^3\vert_T/2$ for 
all $T\in\tri_\star\setminus\tri$. 
However, for (A3) quasi-orthogonality and (A4) discrete reliability, the 
interpolation operator of~\cite{ScottZhang1990} cannot be applied 
directly to $r_{\tri_\star}\in Y_{\mathrm{N},k}(\tri_\star)$ as done in the proof of 
Theorem~\ref{t:PMPdrel}, because $Y_{\mathrm{N},k}(\tri_\star)\not\subseteq 
H^1(\Omega;\R^3)$. This can be overcome by a
quasi-interpolation based on a quasi-interpolation operator 
from~\cite{Schoeberl2008} and a projection operator 
from~\cite{ZhongChenShuWittumXu2012}. Its properties are summarized in 
the following theorem.

\begin{theorem}[quasi-interpolation]\label{t:PMP3Dquasiinterpol}
Let $\tri_\star$ be an admissible refinement of $\tri$ and define 
$\mathcal{R}(\tri,\tri_\star):=\{T\in\tri\mid \exists 
K_1\in\tri\setminus\tri_\star\exists K_2\in\tri\text{ with }K_1\cap 
K_2\neq\emptyset\text{ and }T\cap K_2\neq\emptyset\}$.
Let $\gamma_{\tri_\star}\in Z_h(\tri_\star)$. Then there exists 
$\gamma_\tri\in Y_{\mathrm{N},k}(\tri)$, $\rho\in H^1(\Omega)$, and $\Phi\in 
H^1(\Omega;\R^3)$ with  
\begin{align*}
 \gamma_{\tri_\star} - \gamma_\tri &= \nabla \rho+\Phi,\\
 (\gamma_{\tri_\star} - \gamma_\tri)\vert_T & =0
  \text{ for all }T\in \tri\setminus \mathcal{R}(\tri,\tri_\star),\\
 \|h_\tri^{-1}\Phi\|_{L^2(\Omega)}+\|\nabla\Phi\|_{L^2(\Omega)}
 &\lesssim
     \left\|\Curl \gamma_{\tri_\star}\right\|_{L^2(\Omega)}.
\end{align*}
\end{theorem}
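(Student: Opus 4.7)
The plan is to construct $\gamma_\tri$ by combining a regular decomposition for $H(\Curl,\Omega)$ functions in the spirit of \cite{Schoeberl2008} with a local projection between the Nédélec spaces $Y_{\mathrm{N},k}(\tri_\star)$ and $Y_{\mathrm{N},k}(\tri)$ as in \cite{ZhongChenShuWittumXu2012}. The key point is that neither tool alone suffices: the regular decomposition splits off a non-gradient part with $H^1$-regularity but does not land in the coarse Nédélec space, while the projection operator produces $\gamma_\tri\in Y_{\mathrm{N},k}(\tri)$ but the raw difference $\gamma_{\tri_\star}-\gamma_\tri$ is only $L^2$-regular. Glueing the two together is what yields both the $H^1$-type bound on $\Phi$ and the vanishing on $\tri\setminus\mathcal{R}(\tri,\tri_\star)$.

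First, I would apply the Schöberl-type regular decomposition relative to the coarse mesh $\tri$ to write $\gamma_{\tri_\star}=\nabla\tilde\rho+\tilde\Phi+\eta_\tri$, where $\tilde\rho\in H^1(\Omega)$, $\tilde\Phi\in H^1(\Omega;\R^3)$, and $\eta_\tri\in Y_{\mathrm{N},k}(\tri)$ is a correction in the coarse Nédélec space, together with the stability estimate $\|h_\tri^{-1}\tilde\Phi\|_{L^2(\Omega)}+\|\nabla\tilde\Phi\|_{L^2(\Omega)}\lesssim\|\Curl\gamma_{\tri_\star}\|_{L^2(\Omega)}$. Next, I would apply the local projection operator $\Pi_\tri^{\mathrm{N}}$ from \cite{ZhongChenShuWittumXu2012} to $\tilde\Phi\in H^1(\Omega;\R^3)$, producing a coarse Nédélec approximation that acts as the identity on tetrahedra whose extended patch avoids the refined region, i.e.\ precisely outside $\mathcal{R}(\tri,\tri_\star)$. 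Setting $\gamma_\tri:=\eta_\tri+\Pi_\tri^{\mathrm{N}}\tilde\Phi\in Y_{\mathrm{N},k}(\tri)$ and defining $\rho:=\tilde\rho$ and $\Phi:=\tilde\Phi-\Pi_\tri^{\mathrm{N}}\tilde\Phi$, one obtains $\gamma_{\tri_\star}-\gamma_\tri=\nabla\rho+\Phi$ as required.

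The bound $\|h_\tri^{-1}\Phi\|_{L^2(\Omega)}+\|\nabla\Phi\|_{L^2(\Omega)}\lesssim\|\Curl\gamma_{\tri_\star}\|_{L^2(\Omega)}$ then follows by combining the Schöberl estimate on $\tilde\Phi$ with the standard Scott-Zhang-type approximation and stability of $\Pi_\tri^{\mathrm{N}}$ on $H^1(\Omega;\R^3)$. The vanishing of $\gamma_{\tri_\star}-\gamma_\tri$ on $T\in\tri\setminus\mathcal{R}(\tri,\tri_\star)$ follows because on such $T$ the Schöberl decomposition is locally exact (so $\tilde\Phi$ vanishes there) and $\Pi_\tri^{\mathrm{N}}$ is the identity on the patch of $T$.

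The main obstacle will be the careful coordination of the two constructions so that three properties hold simultaneously: the $h_\tri^{-1}$-weighted $L^2$ bound on $\Phi$ (which forces the regular part to vanish at the correct scale on unrefined elements), the $H^1$ bound on $\Phi$ (which requires the projection $\Pi_\tri^{\mathrm{N}}$ to commute with the gradient decomposition so that no spurious gradient mass is left inside $\Phi$), and the exact locality on $\tri\setminus\mathcal{R}(\tri,\tri_\star)$. This last point in particular requires that the partition of unity underlying the Schöberl construction be subordinate to the coarse mesh $\tri$ and that the support of $\Pi_\tri^{\mathrm{N}}$ at each tetrahedron $T\in\tri$ be contained in the patch captured by the definition of $\mathcal{R}(\tri,\tri_\star)$.
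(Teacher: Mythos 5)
Your proposal correctly identifies the two main tools the paper itself references: the regular decomposition of \cite{Schoeberl2008} and the local N\'ed\'elec projection of \cite{ZhongChenShuWittumXu2012}. However, the order in which you deploy them creates a gap in the locality argument, and you omit an ingredient that the paper's proof explicitly cites.

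The locality step is the problem. You first apply the Sch\"oberl decomposition $\gamma_{\tri_\star}=\nabla\tilde\rho+\tilde\Phi+\eta_\tri$ globally, and then claim that $\tilde\Phi$ vanishes on every $T\in\tri\setminus\mathcal{R}(\tri,\tri_\star)$ because ``the Sch\"oberl decomposition is locally exact'' there. This is not a property of that decomposition as usually formulated: even if $\gamma_{\tri_\star}\vert_T$ already belongs to the coarse N\'ed\'elec space on $T$, the partition-of-unity construction can distribute mass to $\nabla\tilde\rho$ and $\tilde\Phi$ on $T$ (only their sum with $\eta_\tri$ is constrained), and furthermore the cut-off functions overlap into neighbouring refined elements, so $\tilde\Phi\vert_T$ need not vanish. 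The fix is to reverse the order, which is how the construction in \cite[Theorem~5.3]{ZhongChenShuWittumXu2012} actually proceeds: one first defines $\gamma_\tri$ as a local, coarse-mesh-reproducing quasi-interpolant of $\gamma_{\tri_\star}$, so that $\gamma_{\tri_\star}-\gamma_\tri$ vanishes identically on unrefined elements simply because $\gamma_{\tri_\star}$ is itself a coarse N\'ed\'elec function there, and only then applies the regular decomposition to the error $\gamma_{\tri_\star}-\gamma_\tri$ to produce $\nabla\rho+\Phi$ with the claimed bounds. In your version, the statement ``$\Pi^{\mathrm N}_\tri$ is the identity on the patch of $T$'' applied to the $H^1$ field $\tilde\Phi$ is circular: $\Pi^{\mathrm N}_\tri\tilde\Phi=\tilde\Phi$ near $T$ would itself require $\tilde\Phi$ to be a coarse N\'ed\'elec function there, which is precisely what you are trying to establish.

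Second, the paper's proof also invokes the ellipticity on the discrete kernel, \cite[Proposition~4.6]{AmroucheBernardiDaugeGirault1998}. Since $\gamma_{\tri_\star}\in Z_h(\tri_\star)$ is only a \emph{discretely} divergence-free function, the stability estimate $\|h_\tri^{-1}\Phi\|_{L^2(\Omega)}+\|\nabla\Phi\|_{L^2(\Omega)}\lesssim\|\Curl\gamma_{\tri_\star}\|_{L^2(\Omega)}$ needs a discrete Friedrichs-type inequality on the kernel to control the full $H(\Curl)$-norm of $\gamma_{\tri_\star}$ by its $\Curl$ before the regular-decomposition bounds can be closed. Your proposal relies solely on the Sch\"oberl estimate and does not account for this; you should add the discrete-kernel ellipticity as an explicit ingredient.
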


\begin{proof}
This follows as in the proof of \cite[Theorem~5.3]{ZhongChenShuWittumXu2012}
and with the ellipticity on the discrete 
kernel from \cite[Proposition~4.6]{AmroucheBernardiDaugeGirault1998}.
\end{proof}

The differences between the proof of (A4) discrete reliability and 
the proof of Theorem~\ref{t:PMPdrel} are outlined in the following.
Let 
$(p_{\tri_\star},\alpha_{\tri_\star})\in X_h(\tri_\star)\times 
Z_h(\tri_\star)$ and $(p_\tri,\alpha_\tri)\in X_h(\tri)\times Z_h(\tri)$ 
denote the discrete solutions to~\eqref{e:PMP3DdProb}. As in the proof of 
Theorem~\ref{t:PMPdrel}, let $\sigma_{\tri_\star}\in W_h(\tri_\star)$ and 
$r_{\tri_\star}\in Z_h(\tri_\star)$ such that $p_\tri - 
p_{\tri_\star}=\sigma_{\tri_\star} + \Curl r_{\tri_\star}$. The first term of 
the right-hand side of 
\begin{align*}
 \|p_\tri-p_{\tri_\star}\|^2 
   = (p_\tri-p_{\tri_\star},\sigma_{\tri_\star})_{L^2(\Omega)} 
   + (p_\tri-p_{\tri_\star}, \Curl r_{\tri_\star})_{L^2(\Omega)} 
\end{align*}
is estimated as in the proof of Theorem~\ref{t:PMPdrel}, while for the second 
term, the quasi-interpolant $r_\tri\in Y_{\mathrm{N},k}(\tri)$ of 
$r_{\tri_\star}$ with $r_{\tri_\star}-r_\tri=\nabla\rho+\Phi$ for $\rho\in 
H^1(\Omega)$ and $\Phi\in H^1(\Omega;\R^3)$ from 
Theorem~\ref{t:PMP3Dquasiinterpol} is employed. This 
yields
\begin{align*}
 (p_\tri-p_{\tri_\star},\Curl r_{\tri_\star})_{L^2(\Omega)}
  =  (p_\tri,\Curl(r_{\tri_\star}-r_\tri))_{L^2(\Omega)}
  = (p_\tri,\Curl\Phi)_{L^2(\Omega)}.
\end{align*}
A piecewise integration by parts and the arguments of the proof of 
Theorem~\ref{t:PMPdrel} conclude the proof.
The crucial point is that $\Phi\in H^1(\Omega;\R^3)$ is smooth enough to allow 
for a trace inequality.

The proof of (A3) quasi-orthogonality follows as in the proof of 
Theorem~\ref{t:PMPquasiorthogonality} with the projection property of 
Lemma~\ref{l:PMP3Dintegralmean} and the following modifications 
in~\eqref{e:proofquasiorth1}. 
Since (in the analogue notation as in \eqref{e:proofquasiorth1}) 
$\alpha_{\ell-1}\in 
Z_h(\tri_{\ell-1})\subseteq Y_{\mathrm{N},k}(\tri_M)$, there exists $\gamma_M\in 
Z_h(\tri_M)$ with $\Curl\gamma_M=\Curl\alpha_{\ell-1}$.
Theorem~\ref{t:PMP3Dquasiinterpol} guarantees the existence of 
$\beta_{\ell-1}\in Y_{N,k}(\tri_{\ell-1})$, $\rho\in H^1(\Omega)$ and $\Phi\in 
H^1(\Omega;\R^3)$ with $\alpha_M-\gamma_M-\beta_{\ell-1}=\nabla \rho + \Phi$.
This implies in~\eqref{e:proofquasiorth1} that
\begin{align*}
 (\Curl(\alpha_M-\alpha_{\ell-1}),p_{\ell-1})_{L^2(\Omega)} 
  &=  (\Curl(\alpha_M-\gamma_{M}-\beta_{\ell-1}),p_{\ell-1})_{L^2(\Omega)} \\
 &= (\Curl\Phi,p_{\ell-1})_{L^2(\Omega)}.
\end{align*}
Since $\Phi\in H^1(\Omega;\R^3)$ is smooth enough, a
piecewise integration by parts and the arguments of the proof of 
Theorem~\ref{t:PMPdrel} then prove
\begin{align*}
 (\Curl(\alpha_M-\alpha_{\ell-1}),p_{\ell-1})_{L^2(\Omega)}
  \lesssim (\lambda_{\ell-1}+\mu_{\ell-1}) 
    \left\|\Curl(\alpha_M-\alpha_{\ell-1})\right\|_{L^2(\Omega)} .
\end{align*}
This and the arguments of Theorem~\ref{t:PMPquasiorthogonality} eventually 
prove the quasi-orthogonality.

\section{Numerical experiments}\label{s:PMPnumerics}

This section presents numerical experiments for the 
discretization~\eqref{e:PMPdP} for $k=0,1,2$. 
Subsections~\ref{ss:PMPnumLshapedDbEx}--\ref{ss:PMPnumSingAlpha} compute the 
discrete solutions on sequences of uniformly red-refined triangulations (see 
Figure~\ref{f:PMPredrefinement} for a red-refined triangle) as well as on 
sequences of triangulations created by the adaptive algorithm~\ref{a:PMPafem} 
with bulk parameter $\theta=0.1$ and $\kappa=0.5$ and $\rho=0.75$.
The convergence history plots are logarithmically scaled and display the error 
$\|p-p_h\|_{L^2(\Omega)}$ against the number of degrees of freedom (ndof) of the 
linear system resulting from the Schur complement.
The underlying L-shaped domain 
$\Omega:=(-1,1)^2\setminus ([0,1]\times [-1,0])$ with its initial 
triangulation is depicted in 
Figure~\ref{f:PMPLdomain}.

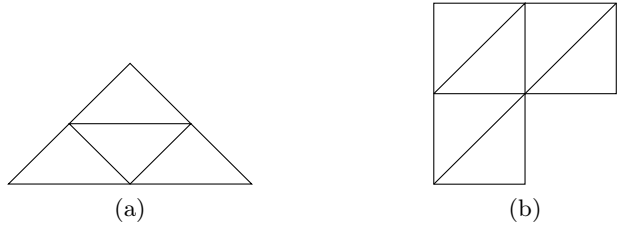
\begin{figure}
\begin{center}
\subfloat[\label{f:PMPredrefinement}]{
  \begin{tikzpicture}[x=0.8cm, y=0.8cm]
    \draw (0,0)--(4,0)--(2,2)--cycle;
    \draw (1,1)--(2,0)--(3,1)--cycle;
  \end{tikzpicture}
}
\hspace{2cm}
\subfloat[\label{f:PMPLdomain}]{
  \begin{tikzpicture}[x=1.2cm, y=1.2cm]
%     \node () at (-1,-1)[left]{$(-1,-1)$};
%     \node () at (-1,1)[left]{$(-1,1)$};
%     \node () at (1,1)[right]{$(1,1)$};
%     \node () at (1,0)[right]{$(1,0)$};
%     \node () at (0,0)[below right]{$(0,0)$};
%     \node () at (0,-1)[right]{$(0,-1)$};
    \draw (0,0)--(1,0)--(1,1)--(-1,1)--(-1,-1)--(0,-1)--cycle;
    \draw (-1,-1)--(1,1);
    \draw (-1,0)--(0,1)--(0,0)--(-1,0);
  \end{tikzpicture}
}
\end{center}
\caption{Red-refined triangle and initial mesh for the L-shaped domain.}
\end{figure}

\subsection{L-shaped domain, I}
\label{ss:PMPnumLshapedDbEx}

The function $u$ given in polar coordinates by
\begin{align*}
 u(r,\phi)=r^{2/3} \sin((2/3)\phi)
\end{align*}
is harmonic.
For the following experiment we choose $\varphi\equiv 0$ and $u_D:=g\,u$ 
with perturbation function $g\in H^2(\Omega)$,
\begin{align*}
  g(x):= \begin{cases}
                 0 & \text{ if }\lvert x\rvert\leq 1/2,\\
                 16 \lvert x\rvert^4 - 64 \lvert x\rvert^3
                 +88 \lvert x\rvert^2-48 \lvert x\rvert+9 
                        & \text{ if }1/2\leq \lvert x\rvert\leq 1,\\
                 1 & \text{ if }\lvert x\rvert\geq 1,
               \end{cases}
\end{align*}
such that 
$g\vert_{\Gamma}=1$ for $\Gamma:=\partial\Omega\setminus(\{0\}\times (-1,0)
\cup (0,1)\times\{0\})$. Since $u\vert_{\partial\Omega\setminus\Gamma}=0$, 
it holds $u_D\vert_{\partial\Omega}=u$. 
Let $B_{1/2}(0):=\{x\in \R^2\mid \lvert x\rvert< 1/2\}$ denote the ball with 
radius $1/2$ and midpoint $(0,0)$. Since $g\vert_{B_{1/2}(0)}=0$ and $u\in 
H^2(\Omega\setminus B_{1/2}(0))$, it holds $u_D\in H^2(\Omega)$.

For non-homogeneous Dirichlet data, the jump $[p_h]_E\cdot\tau_E$ is defined
for boundary edges $E\in\edges$, $E\subseteq \Gamma_D$, with adjacent triangle 
$T_+$ by 
\begin{align*}
 [p_h]_E\cdot\tau_E:= p_h\vert_{T_+}\cdot\tau_E - \nabla u_D\cdot\tau_E.
\end{align*}
The error estimator $\lambda$ is then defined by 
\eqref{e:PMPdefmulambda1}--\eqref{e:PMPdefmulambda2}.
The local data error estimator contributions read
\begin{align*}
  \mu^2(T):=\|(\varphi-\nabla u_D)-\Pi_k(\varphi-\nabla u_D)\|_{L^2(T)}^2.
\end{align*}
The global error estimator $\mu$ is defined by \eqref{e:PMPdefmulambda2}.

The errors and error estimators for the approximation 
$p_h\in P_k(\tri;\R^2)$ of $\nabla u$ for $k=0,1,2$ 
are plotted in Figure~\ref{f:PMPnumLshapedDbEx} against 
the number of degrees of freedom. The errors and error estimators show an 
equivalent behaviour with an overestimation of approximately 10.
Uniform refinement leads to a suboptimal convergence rate 
of $h^{2/3}\approx\texttt{ndof}^{-1/3}$ for $k=0,1,2$. The adaptive refinement 
reproduces 
the optimal convergence rates of $\texttt{ndof}^{-(k+1)/2}$ for $k=0,1,2$.
Figure~\ref{f:PMPnumLshapedDbExTriang} depicts three meshes created by the 
adaptive algorithm for $k=0$, $1$, and $2$ with approximately 1000 degrees of 
freedom. The singularity at the re-entrant corner leads to a strong refinement 
towards $(0,0)$, while the refinement for $k=0,1$ also reflects the behaviour of the 
right-hand side, i.e., one also observes a moderate refinement on the 
circular ring $\{x\in\Omega\mid 1/2\leq \lvert x\rvert\leq 1\}$.
The marking with respect to the data-approximation 
($\mu_\ell^2>\kappa\lambda_\ell^2$ in Algorithm~\ref{a:PMPafem}) is 
applied at the first 7 (resp.\ 5 and 10) levels for $k=0$ (resp.\ $k=1$ and 
$k=2$) and then at approximately every third level.

\begin{figure}
 \begin{center}
 \includegraphics{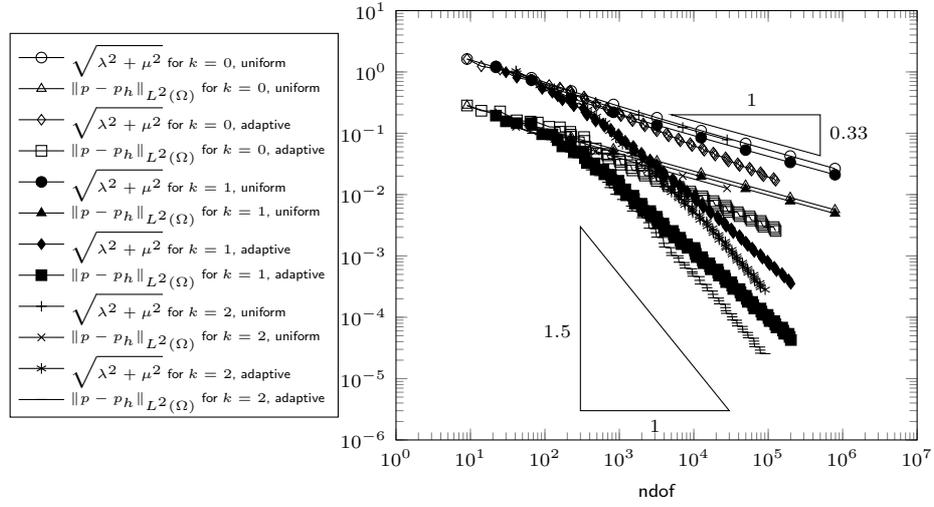}
 \end{center}
 \caption[Errors and error estimators from 
 Subsection~\ref{ss:PMPnumLshapedDbEx}.]{\label{f:PMPnumLshapedDbEx}Errors and error estimators from 
 Subsection~\ref{ss:PMPnumLshapedDbEx}.}
\end{figure}

\begin{figure}
 \begin{center}
  \includegraphics[width=0.32\textwidth]
      {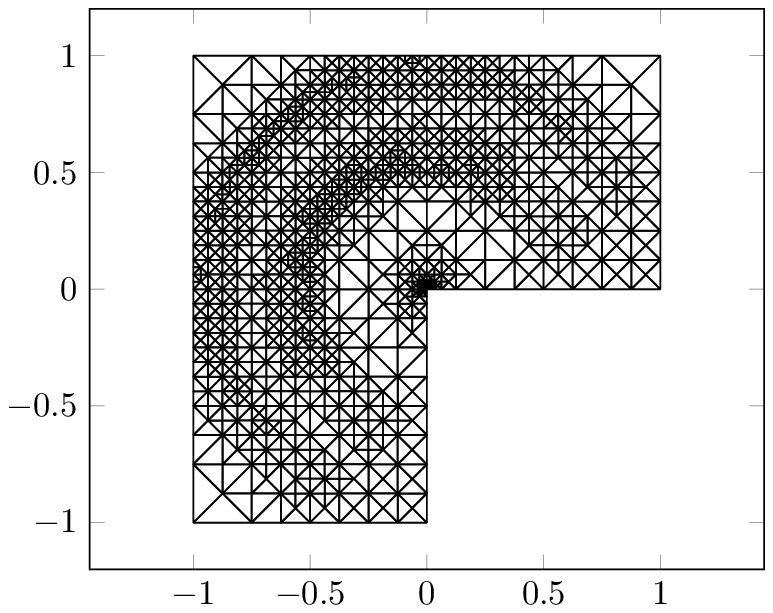}
  \includegraphics[width=0.32\textwidth]
      {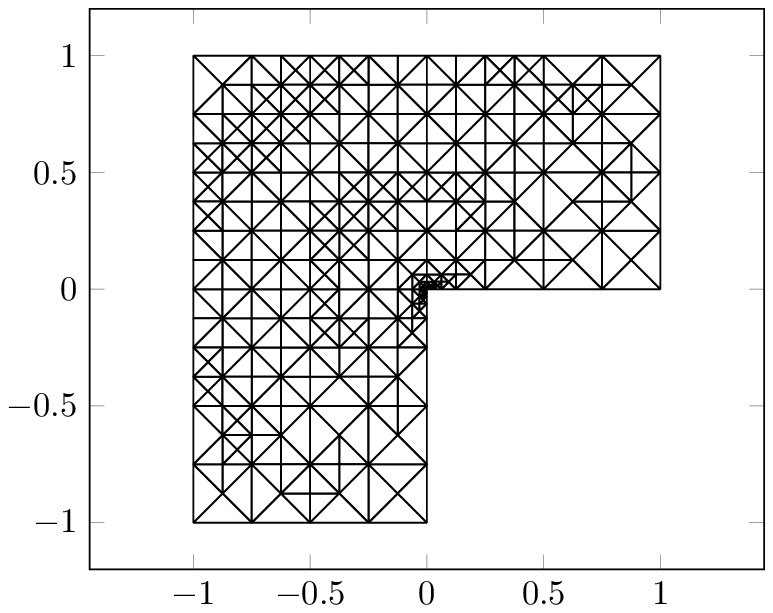} 
  \includegraphics[width=0.32\textwidth]
      {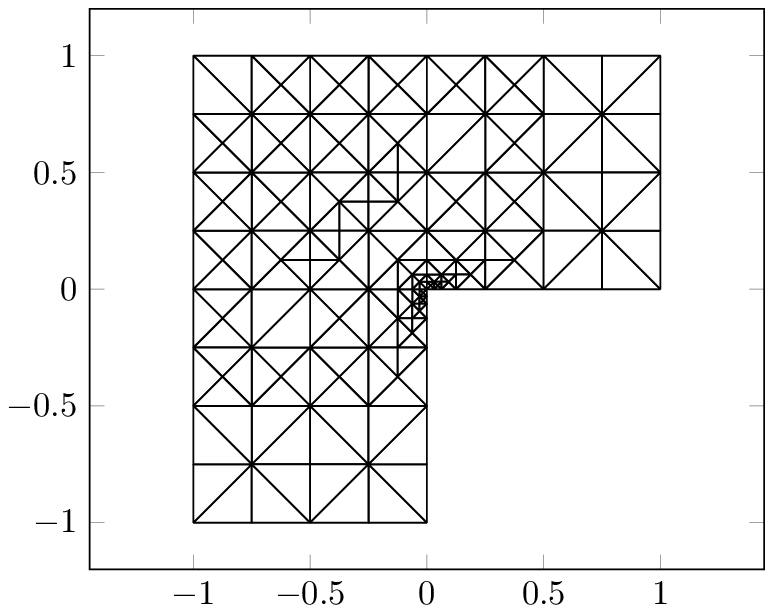}
 \end{center}
  \caption[Adaptively refined triangulations 
 for the experiment from 
Subsection~\ref{ss:PMPnumLshapedDbEx}.]{\label{f:PMPnumLshapedDbExTriang}Adaptively refined triangulations 
for the experiment from 
Subsection~\ref{ss:PMPnumLshapedDbEx}.}
\end{figure}

\subsection{L-shaped domain, II}
\label{ss:PMPnumLshaped}
% name = Lshaped 

For $f\equiv -1$ and $u_D\equiv 0$ 
define $\varphi(x,y):=(1/2) (x,y)$ with $-\ddiv\varphi=f$.

The error estimators are plotted against the degrees 
of freedom in Figure~\ref{f:PMPnumLshaped} for $k=0,1,2$. The error 
estimators 
show for $k=0,1,2$ a suboptimal convergence rate of 
$h^{2/3}\approx\texttt{ndof}^{-1/3}$ for uniform refinement. The adaptive 
algorithm~\ref{a:PMPafem} recovers the optimal convergence rate of 
$\texttt{ndof}^{-(k+1)/2}$.
Adaptively refined meshes are depicted in Figure~\ref{f:PMPnumLshapedTriang} 
for approximately 1000 degrees of freedom. The strong refinement towards 
the singularity at the re-entrant corner is clearly visible.
The smoothness of $\varphi\in P_1(\Omega;\R^2)$ implies that the 
data-approximation error estimator $\mu_\ell$ vanishes on all 
triangulations for $k=1,2$. For $k=0$, $\mu_\ell$ does not vanish, 
nevertheless, since $\mu_\ell^2\leq\kappa\lambda_\ell^2$ for all $\ell$, 
only the D\"orfler marking is applied.

\begin{figure}
 \begin{center}
 \includegraphics{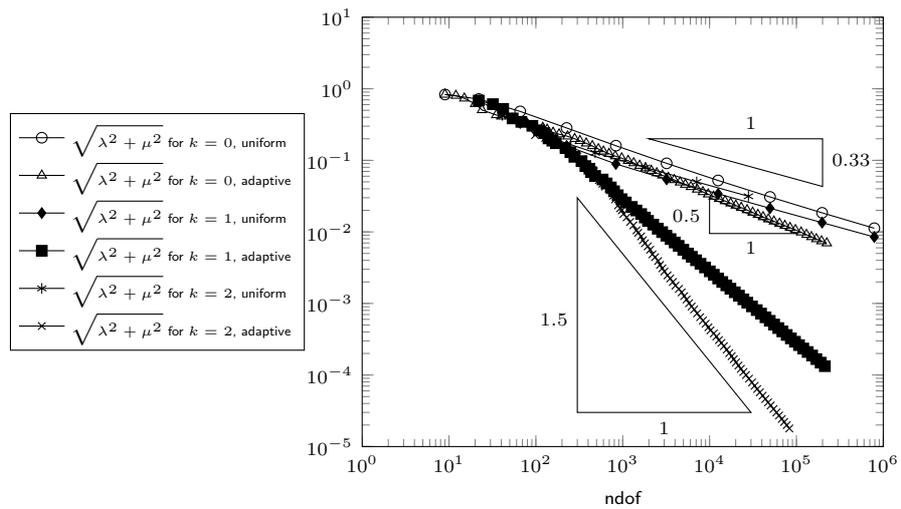}
 \end{center}
 \caption[Error estimators for 
 the experiment from Subsection~\ref{ss:PMPnumLshaped}.]{\label{f:PMPnumLshaped}Error estimators for 
 the experiment from Subsection~\ref{ss:PMPnumLshaped}.}
\end{figure}

\begin{figure}
 \begin{center}
  \includegraphics[width=0.32\textwidth]
      {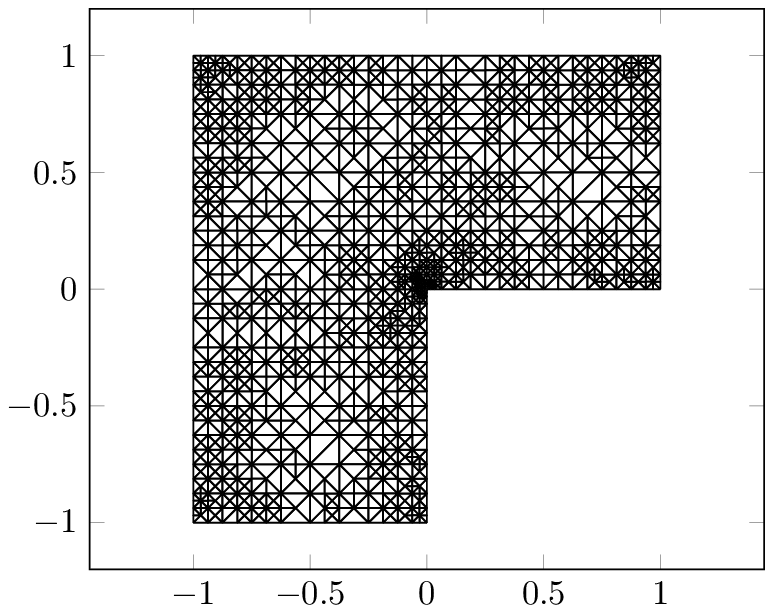}
  \includegraphics[width=0.32\textwidth]
      {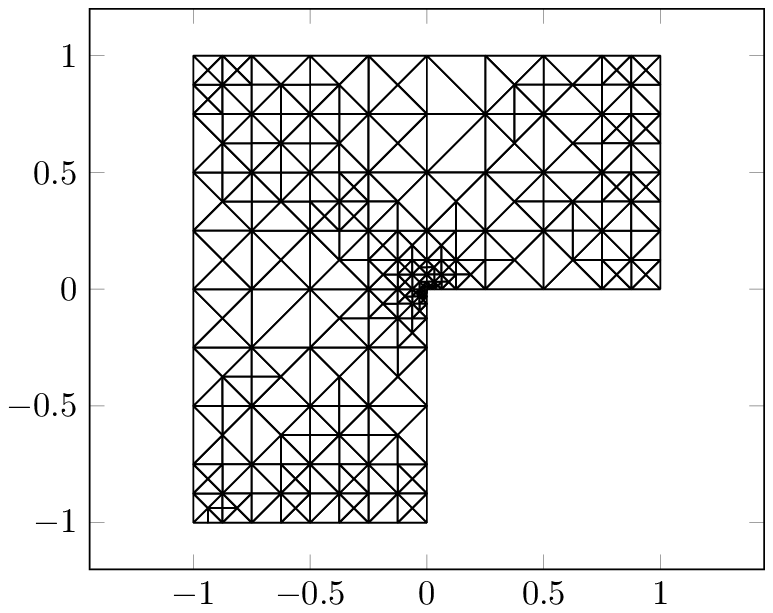} 
  \includegraphics[width=0.32\textwidth]
      {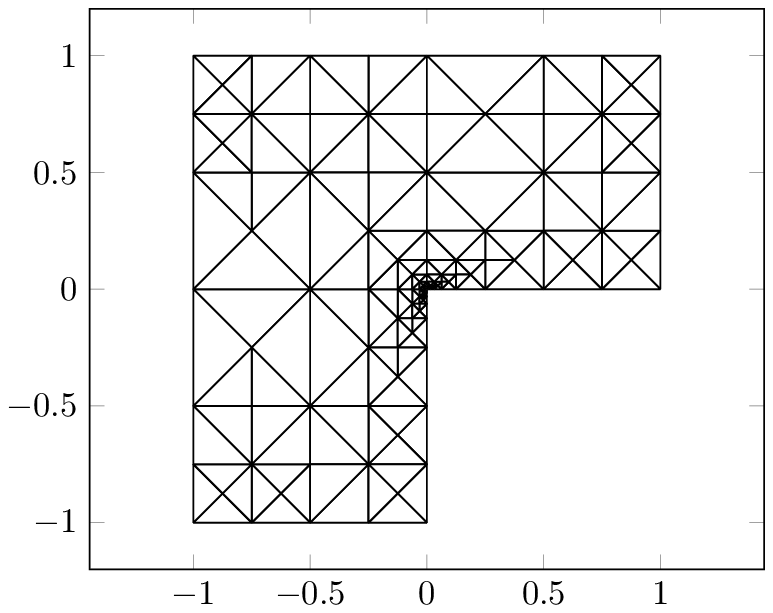}
 \end{center}
  \caption[Adaptively refined triangulations 
 for the experiment from 
Subsection~\ref{ss:PMPnumLshaped}.]{\label{f:PMPnumLshapedTriang}Adaptively 
refined triangulations for the experiment from 
Subsection~\ref{ss:PMPnumLshaped}.}
\end{figure}

\subsection{Singular $\alpha$}\label{ss:PMPnumSingAlpha}

This subsection is devoted to a numerical investigation of the dependence of 
the error $\|p-p_h\|_{L^2(\Omega)}$ on the regularity of $\alpha$.
The exact 
smooth solution $u\in C^\infty(\Omega)$ of 
\begin{align*}
 -\Delta u= 2\sin(\pi x)\sin(\pi y)\text{ in }\Omega
 \qquad\text{and}\qquad
  u\vert_{\Gamma_D}=0
\end{align*}
reads $u(x,y) = \sin(\pi x)\sin(\pi y)$.
Define $\varphi = \nabla u + \Curl(\widetilde{\alpha})$
with $\widetilde{\alpha}\in H^1(\Omega)\setminus H^2(\Omega)$ defined by
$\widetilde{\alpha}(r,\phi) = r^{2/3} \sin(2\phi/3)$.
Then $\varphi\in H(\ddiv,\Omega)$ with $-\ddiv\varphi=f$.

The errors and error estimators are plotted in 
Figure~\ref{f:PMPnumLshapedSingAlpha} against the number of degrees of freedom. 
The convergence rate on uniform red-refined meshes for $k=1,2$ is 
$h^{2/3}\approx\texttt{ndof}^{-1/3}$ and, hence, the convergence rate seems 
to depend on the regularity of $\alpha$. 
The errors and error estimators show the same convergence rate. 
Figure~\ref{f:PMPnumLshapedSingAlpha_P1} focuses on the results for $k=0$ and 
uniform mesh-refinement. 
The error $\|p-p_h\|_{L^2(\Omega)}$ and the error estimator $\sqrt{\lambda^2+\mu^2}$ show a 
convergence rate between $h$ and $h^{2/3}$, while 
$\left\|\Curl(\alpha-\alpha_h)\right\|_{L^2(\Omega)}$ converges with a rate of 
$h^{2/3}\approx\texttt{ndof}^{-1/3}$ due to the singularity of $\alpha$.
This numerical experiment suggests that the error $\|p-p_h\|_{L^2(\Omega)}$
does not depend on the regularity of $\alpha$ 
(at least in a preasymptotic regime).
The triangle inequality implies 
$\left\|\Curl(\alpha-\alpha_h)\right\|_{L^2(\Omega)}\leq 
\|p-p_h\|_{L^2(\Omega)}+\mu$. This upper bound is also plotted in 
Figure~\ref{f:PMPnumLshapedSingAlpha_P1}.

Figure~\ref{f:PMPnumLshapedSingCurlTriang} depicts adaptively refined meshes 
for $k=0,1,2$ with approximately 1000 degrees of freedom. The singularity of 
$\alpha$ leads to a strong refinement towards the re-entrant corner.
The marking with respect to the data-approximation 
($\mu_\ell^2>\kappa\lambda_\ell^2$ in Algorithm~\ref{a:PMPafem}) is only 
applied at levels 1--5, 7, 12, and 18 for $k=0$. All other marking steps for 
$k=0,1,2$ use the D\"orfler marking ($\mu_\ell^2\leq\kappa\lambda_\ell^2$).

\begin{figure}
 \begin{center}
 \includegraphics{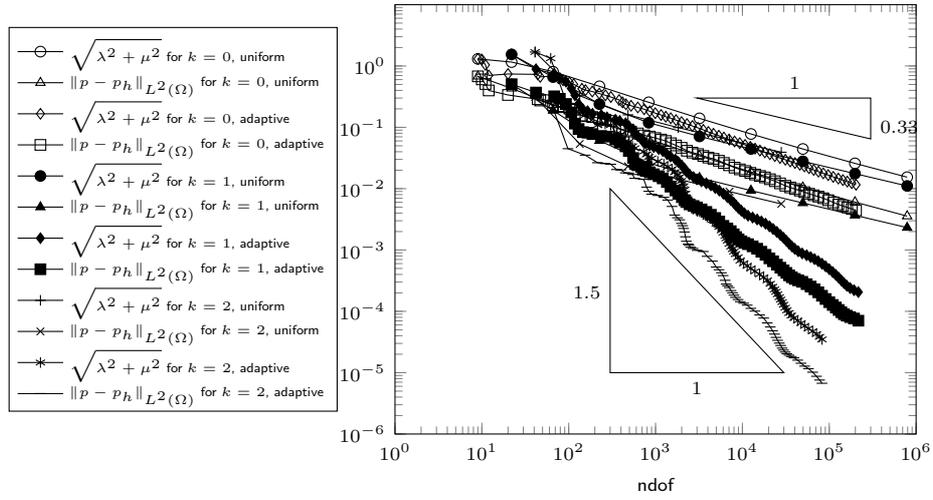}
 \end{center}
 \caption[Errors and error estimators for 
 the experiment
 from Subsection~\ref{ss:PMPnumSingAlpha}.]{\label{f:PMPnumLshapedSingAlpha}Errors 
 and error estimators for 
 the experiment with singular $\alpha$ 
 from Subsection~\ref{ss:PMPnumSingAlpha}.}
\end{figure}

\begin{figure}
 \begin{center}
 \includegraphics{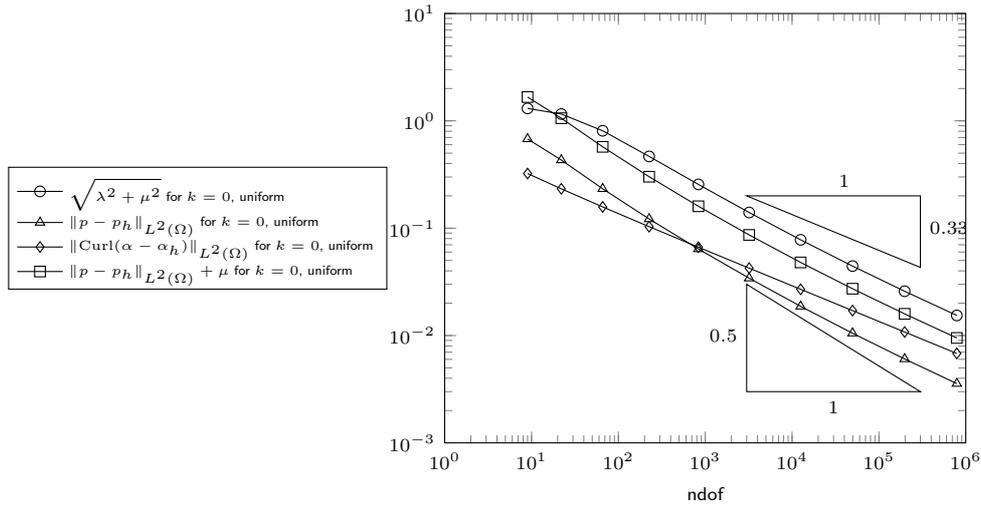}
 \end{center}
 \caption[Errors and error estimators for 
 the experiment from Subsection~\ref{ss:PMPnumSingAlpha}
 and uniform refinement.]{\label{f:PMPnumLshapedSingAlpha_P1}Errors and 
 error estimators for 
 the experiment with singular $\alpha$ 
 from Subsection~\ref{ss:PMPnumSingAlpha} and uniform refinement.}
\end{figure}

\begin{figure}
 \begin{center}
  \includegraphics[width=0.32\textwidth]
      {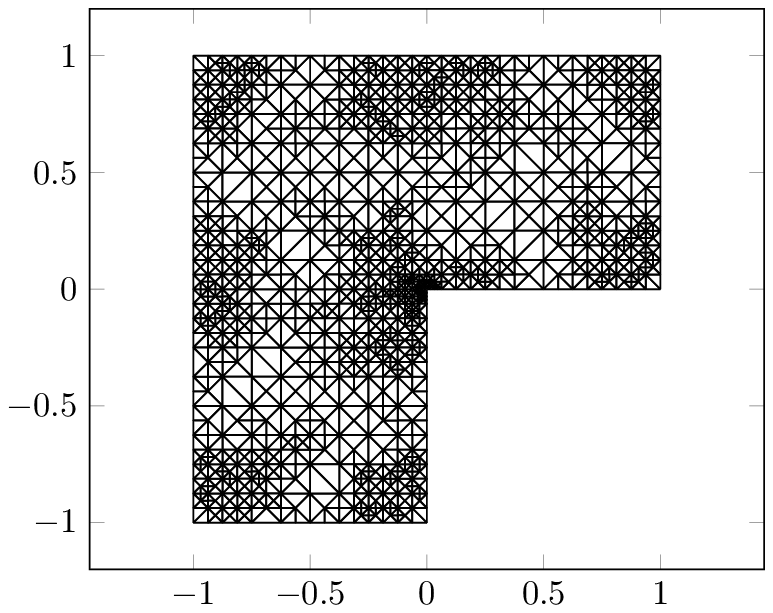}
  \includegraphics[width=0.32\textwidth]
      {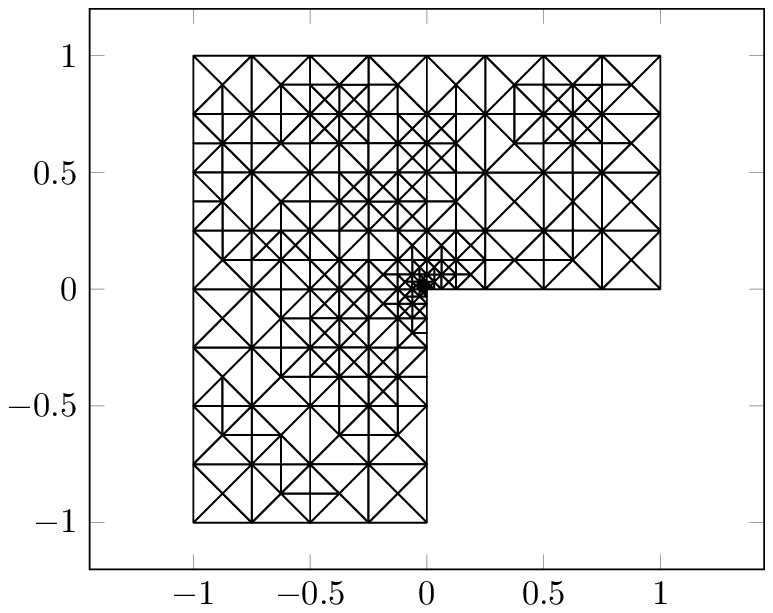} 
  \includegraphics[width=0.32\textwidth]
      {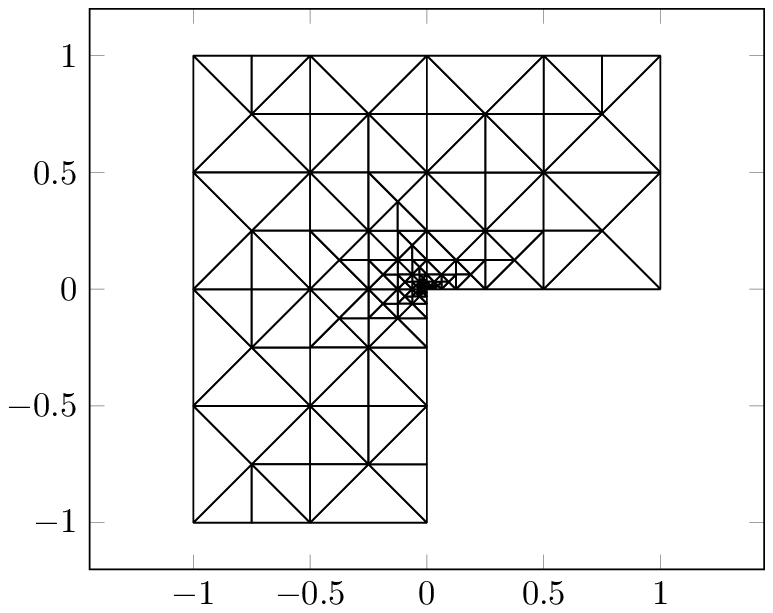}
 \end{center}
  \caption[Adaptively refined triangulations 
 for the experiment from 
Subsection~\ref{ss:PMPnumSingAlpha}.]{\label{f:PMPnumLshapedSingCurlTriang}Adaptively refined 
triangulations for the experiment from 
Subsection~\ref{ss:PMPnumSingAlpha}.}
\end{figure}

\section*{Acknowledgement}
The author would like to thank Professor C.\ Carstensen for valuable 
discussions.

{\footnotesize

}

% {\footnotesize
% \bibliographystyle{abbrv}
% \bibliography{PMPhelmholtz}

\begin{thebibliography}{10}

\bibitem{AlonsoRodriguezHiptmairValli2004}
A.~Alonso~Rodr{\'{\i}}guez, R.~Hiptmair, and A.~Valli.
\newblock Mixed finite element approximation of eddy current problems.
\newblock {\em IMA J. Numer. Anal.}, 24(2):255--271, 2004.

\bibitem{AmroucheBernardiDaugeGirault1998}
C.~Amrouche, C.~Bernardi, M.~Dauge, and V.~Girault.
\newblock Vector potentials in three-dimensional non-smooth domains.
\newblock {\em Math. Methods Appl. Sci.}, 21(9):823--864, 1998.

\bibitem{ArnoldBrezzi1985}
D.~N. Arnold and F.~Brezzi.
\newblock Mixed and nonconforming finite element methods: implementation,
  postprocessing and error estimates.
\newblock {\em RAIRO Mod\'el. Math. Anal. Num\'er.}, 19(1):7--32, 1985.

\bibitem{ArnoldFalk1989}
D.~N. Arnold and R.~S. Falk.
\newblock A uniformly accurate finite element method for the
  {R}eissner-{M}indlin plate.
\newblock {\em SIAM J. Numer. Anal.}, 26(6):1276--1290, 1989.

\bibitem{ArnoldFalkWinther1997}
D.~N. Arnold, R.~S. Falk, and R.~Winther.
\newblock Preconditioning in {$H({\rm div})$} and applications.
\newblock {\em Math. Comp.}, 66(219):957--984, 1997.

\bibitem{BeckerMaoShi2010}
R.~Becker, S.~Mao, and Z.~Shi.
\newblock A convergent nonconforming adaptive finite element method with
  quasi-optimal complexity.
\newblock {\em SIAM J. Numer. Anal.}, 47(6):4639--4659, 2010.

\bibitem{BinevDahmenDeVore2004}
P.~Binev, W.~Dahmen, and R.~DeVore.
\newblock Adaptive finite element methods with convergence rates.
\newblock {\em Numer. Math.}, 97(2):219--268, 2004.

\bibitem{BinevDeVore2004}
P.~Binev and R.~DeVore.
\newblock Fast computation in adaptive tree approximation.
\newblock {\em Numer. Math.}, 97(2):193--217, 2004.

\bibitem{BoffiBrezziFortin2013}
D.~Boffi, F.~Brezzi, and M.~Fortin.
\newblock {\em Mixed Finite Element Methods and Applications}, volume~44 of
  {\em Springer Series in Computational Mathematics}.
\newblock Springer, Heidelberg, 2013.

\bibitem{BrennerScott08}
S.~C. Brenner and L.~R. Scott.
\newblock {\em The Mathematical Theory of Finite Element Methods}, volume~15 of
  {\em Texts in Applied Mathematics}.
\newblock Springer Verlag, New York, Berlin, Heidelberg, 3 edition, 2008.

\bibitem{Brezzi1974}
F.~Brezzi.
\newblock On the existence, uniqueness and approximation of saddle-point
  problems arising from {L}agrangian multipliers.
\newblock {\em Rev. Fran\c caise Automat. Informat. Recherche Op\'erationnelle
  S\'er. Rouge}, 8(R-2):129--151, 1974.

\bibitem{BrezziFortinStenberg1991}
F.~Brezzi, M.~Fortin, and R.~Stenberg.
\newblock Error analysis of mixed-interpolated elements for
  {R}eissner-{M}indlin plates.
\newblock {\em Math. Models Methods Appl. Sci.}, 1(2):125--151, 1991.

\bibitem{CarstensenFeischlPagePraetorius2014}
C.~Carstensen, M.~Feischl, M.~Page, and D.~Praetorius.
\newblock Axioms of adaptivity.
\newblock {\em Comput. Math. Appl.}, 67(6):1195--1253, 2014.

\bibitem{CarstensenGallistlSchedensack2014}
C.~Carstensen, D.~Gallistl, and M.~Schedensack.
\newblock Adaptive nonconforming {C}rouzeix-{R}aviart {FEM} for eigenvalue
  problems.
\newblock {\em Math. Comp.}, 84(293):1061--1087, 2015.

\bibitem{CarstensenPeterseimSchedensack2012}
C.~Carstensen, D.~Peterseim, and M.~Schedensack.
\newblock Comparison results of finite element methods for the {P}oisson model
  problem.
\newblock {\em SIAM J. Numer. Anal.}, 50(6):2803--2823, 2012.

\bibitem{CarstensenRabus}
C.~Carstensen and H.~Rabus.
\newblock Axioms of adaptivity for separate marking.
\newblock 2015.
\newblock In preparation, private communication.

\bibitem{CarstensenSchedensack2014}
C.~Carstensen and M.~Schedensack.
\newblock Medius analysis and comparison results for first-order finite element
  methods in linear elasticity.
\newblock {\em IMA J. Numer. Anal.}, 2014.
\newblock Published online.

\bibitem{CasconKreuzerNochettoSiebert2008}
J.~M. Cascon, C.~Kreuzer, R.~H. Nochetto, and K.~G. Siebert.
\newblock Quasi-optimal convergence rate for an adaptive finite element method.
\newblock {\em SIAM J. Numer. Anal.}, 46(5):2524--2550, 2008.

\bibitem{Ciarlet1978}
P.~G. Ciarlet.
\newblock {\em The Finite Element Method for Elliptic Problems}.
\newblock Studies in Mathematics and its Applications, Vol. 4. North-Holland
  Publishing Co., Amsterdam-New York-Oxford, 1978.

\bibitem{CrouzeixFalk1989}
M.~Crouzeix and R.~S. Falk.
\newblock Nonconforming finite elements for the {S}tokes problem.
\newblock {\em Math. Comp.}, 52(186):437--456, 1989.

\bibitem{CrouzeixRaviart1973}
M.~Crouzeix and P.-A. Raviart.
\newblock Conforming and nonconforming finite element methods for solving the
  stationary {S}tokes equations. {I}.
\newblock {\em Rev. Fran\c caise Automat. Informat. Recherche Op\'erationnelle
  S\'er. Rouge}, 7(R-3):33--75, 1973.

\bibitem{Fortin1985}
M.~Fortin.
\newblock A three-dimensional quadratic nonconforming element.
\newblock {\em Numer. Math.}, 46(2):269--279, 1985.

\bibitem{FortinSoulie1983}
M.~Fortin and M.~Soulie.
\newblock A nonconforming piecewise quadratic finite element on triangles.
\newblock {\em Internat. J. Numer. Methods Engrg.}, 19(4):505--520, 1983.

\bibitem{GiraultRaviart1986}
V.~Girault and P.-A. Raviart.
\newblock {\em Finite Element Methods for {N}avier-{S}tokes Equations},
  volume~5 of {\em Springer Series in Computational Mathematics}.
\newblock Springer-Verlag, Berlin, 1986.

\bibitem{Gudi2010}
T.~Gudi.
\newblock A new error analysis for discontinuous finite element methods for
  linear elliptic problems.
\newblock {\em Math. Comp.}, 79(272):2169--2189, 2010.

\bibitem{HuangXu2012}
J.~Huang and Y.~Xu.
\newblock Convergence and complexity of arbitrary order adaptive mixed element
  methods for the {P}oisson equation.
\newblock {\em Sci. China Math.}, 55(5):1083--1098, 2012.

\bibitem{Marini1985}
L.~D. Marini.
\newblock An inexpensive method for the evaluation of the solution of the
  lowest order {R}aviart-{T}homas mixed method.
\newblock {\em SIAM J. Numer. Anal.}, 22(3):493--496, 1985.

\bibitem{MatthiesTobiska2005}
G.~Matthies and L.~Tobiska.
\newblock Inf-sup stable non-conforming finite elements of arbitrary order on
  triangles.
\newblock {\em Numer. Math.}, 102(2):293--309, 2005.

\bibitem{MaubachRabier2003}
J.~M.~L. Maubach and P.~J. Rabier.
\newblock Nonconforming finite elements of arbitrary degree over triangles.
\newblock {RANA} : {R}eports on applied and numerical analysis, Technische
  Universiteit Eindhoven, 2003.

\bibitem{Nedelec1980}
J.-C. N{\'e}d{\'e}lec.
\newblock Mixed finite elements in {${\bf R}^{3}$}.
\newblock {\em Numer. Math.}, 35(3):315--341, 1980.

\bibitem{Nedelec1986}
J.-C. N{\'e}d{\'e}lec.
\newblock A new family of mixed finite elements in {${\bf R}^3$}.
\newblock {\em Numer. Math.}, 50(1):57--81, 1986.

\bibitem{Rabus2010}
H.~Rabus.
\newblock A natural adaptive nonconforming {FEM} of quasi-optimal complexity.
\newblock {\em Comput. Methods Appl. Math.}, 10(3):315--325, 2010.

\bibitem{RannacherTurek1990}
R.~Rannacher and S.~Turek.
\newblock Simple nonconforming quadrilateral {S}tokes element.
\newblock {\em Numer. Methods Partial Differential Equations}, 8(2):97--111,
  1992.

\bibitem{RaviartThomas1977}
P.-A. Raviart and J.~M. Thomas.
\newblock A mixed finite element method for 2nd order elliptic problems.
\newblock In {\em Mathematical Aspects of Finite Element Methods ({P}roc.
  {C}onf., {C}onsiglio {N}az. delle {R}icerche ({C}.{N}.{R}.), {R}ome, 1975)},
  pages 292--315. Springer, Berlin, 1977.

\bibitem{Rudin1976}
W.~Rudin.
\newblock {\em Principles of Mathematical Analysis}.
\newblock McGraw-Hill Book Co., New York-Auckland-D\"usseldorf, third edition,
  1976.

\bibitem{SchedensackmLapl2015}
M.~{Schedensack}.
\newblock A new discretization for $m$th-{L}aplace equations with arbitrary
  polynomial degrees.
\newblock 2015.
\newblock Preprint, arXiv:1512.06513.

\bibitem{SchedensackLE2016}
M.~{Schedensack}.
\newblock Mixed finite element methods for linear elasticity and the {S}tokes
  equations based on the {H}elmholtz decomposition.
\newblock 2016.
\newblock In preparation.

\bibitem{Schoeberl2008}
J.~Sch{\"o}berl.
\newblock A posteriori error estimates for {M}axwell equations.
\newblock {\em Math. Comp.}, 77(262):633--649, 2008.

\bibitem{ScottZhang1990}
L.~R. Scott and S.~Zhang.
\newblock Finite element interpolation of nonsmooth functions satisfying
  boundary conditions.
\newblock {\em Math. Comp.}, 54(190):483--493, 1990.

\bibitem{Stevenson2008}
R.~Stevenson.
\newblock The completion of locally refined simplicial partitions created by
  bisection.
\newblock {\em Math. Comp.}, 77(261):227--241, 2008.

\bibitem{StoyanBaran2006}
G.~Stoyan and {\'A}.~Baran.
\newblock Crouzeix-{V}elte decompositions for higher-order finite elements.
\newblock {\em Comput. Math. Appl.}, 51(6-7):967--986, 2006.

\bibitem{Veeser2014}
A.~Veeser.
\newblock Approximating gradients with continuous piecewise polynomial
  functions.
\newblock {\em Foundations of Computational Mathematics}, pages 1--28, 2014.

\bibitem{Verfuerth96}
R.~Verf{\"u}rth.
\newblock {\em A Review of a Posteriori Error Estimation and Adaptive
  Mesh-Refinement Techniques}.
\newblock Advances in numerical mathematics. Wiley, 1996.

\bibitem{ZhongChenShuWittumXu2012}
L.~Zhong, L.~Chen, S.~Shu, G.~Wittum, and J.~Xu.
\newblock Convergence and optimality of adaptive edge finite element methods
  for time-harmonic {M}axwell equations.
\newblock {\em Math. Comp.}, 81(278):623--642, 2012.

\end{thebibliography}
% }
\end{document}